\titleformat{\subsection}[runin]
{\bfseries} {\thesubsection{.}}{0.15cm}{}[.]
\titleformat{\subsubsection}[runin]
{\em}{\thesubsubsection{.}}{0.15cm}{}[.]
\newtheorem{theorem}{Theorem}[section]
\newtheorem{lemma}[theorem]{Lemma}
\newtheorem{claim}[theorem]{Claim}
\newtheorem{corollary}[theorem]{Corollary}
\theoremstyle{definition}
\newtheorem{definition}[theorem]{Definition}
\newtheorem{remark}[theorem]{Remark}
\numberwithin{equation}{section}
\numberwithin{figure}{section}
\newcommand\Ccal{\mathcal{C}}
\newcommand\Hcal{\mathcal{H}}
\newcommand\Mcal{\mathcal{M}}
\newcommand\Tcal{\mathcal{T}}
\newcommand\Ascr{\mathscr{A}}
\newcommand\Cscr{\mathscr{C}}
\newcommand\Oscr{\mathscr{O}}
\newcommand\C{\mathbb{C}}
\newcommand\D{\overline{\mathbb D}}
\newcommand\CP{\mathbb{CP}}
\renewcommand\D{\mathbb D}
\newcommand\N{\mathbb{N}}
\renewcommand\P{\mathbb{P}}
\renewcommand\S{\mathbb{S}}
\newcommand\Z{\mathbb{Z}}
\renewcommand\c{\mathbb{C}}
\newcommand\cp{\mathbb{CP}}
\newcommand\h{\mathbb{H}}
\newcommand\n{\mathbb{N}}
\renewcommand\r{\mathbb{R}}
\newcommand\s{\mathbb{S}}
\newcommand\z{\mathbb{Z}}
\newcommand\igot{\mathfrak{i}}
\renewcommand\igot{\mathfrak{i}}
\newcommand\Agot{\mathfrak{A}}
\newcommand\wt{\widetilde}
\newcommand\wh{\widehat}
\newcommand\dist{\mathrm{dist}}
\newcommand\CMC{\mathrm{CMC\text{-}1}}
\newcommand{\SL}{{\rm{SL}}_2(\C)}
\newcommand{\ord}{{\rm ord}}
\newcommand\Qcal{\mathcal{Q}}
\def\dist{\mathrm{dist}}
\begin{document}
\fancyhead[LO]{Holomorphic null curves in the special linear group}
\fancyhead[RE]{A.\ Alarc\'on and J.\ Hidalgo}
\fancyhead[RO,LE]{\thepage}

\thispagestyle{empty}

\begin{center}
{\bf\Large Holomorphic null curves in the special linear group
} 

\medskip
%

{\bf Antonio Alarc\'on and
Jorge Hidalgo}
\end{center}

\medskip

\begin{quoting}[leftmargin={5mm}]
{\small
\noindent {\bf Abstract}\hspace*{0.1cm}
In this paper we 
develop the theory of approximation for holomorphic null curves in the special linear group $\SL$. In particular, we establish Runge, Mergelyan, Mittag-Leffler, and Carleman type theorems for the family of holomorphic null immersions $M\to\SL$ from any open Riemann surface $M$. Our results include jet interpolation of Weierstrass type and approximation by embeddings, as well as global conditions on the approximating curves. 
As application, we show that every open Riemann surface admits a proper holomorphic null embedding into $\SL$, and hence also a proper conformal immersion of constant mean curvature $1$ into hyperbolic 3-space. This settles a problem posed by Alarc\' on and Forstneri\v c in 2015.

\noindent{\bf Keywords}\hspace*{0.1cm} 
Special linear group, Riemann surface, holomorphic null curve, holomorphic approximation, constant mean curvature surface, Bryant surface, $\CMC$ face.

\noindent{\bf Mathematics Subject Classification (2020)}\hspace*{0.1cm} 
32H02,
32E30,
53A10, 
53C42. 
}
\end{quoting}


\section{Introduction}\label{sec:intro}
\noindent
A holomorphic immersion $F:M\to\SL\subset\C^4$ from an open Riemann surface $M$ into the
special linear group
\[
\SL = \left\{  z=   \left( \begin{array}{cc}
z_{11} & z_{12}\\
z_{21}& z_{22} 
\end{array} \right) \in \C^4 \colon  \det z =   z_{11} z_{22} - z_{12} z_{21}  = 1 \right\}
\]
is said to be {\em null}, or a  {\em holomorphic null curve}, if the pull-back by $F$ of the Killing form on the Lie algebra $\mathfrak{sl}_2 (\c)$ vanishes identically, that is, the $\mathfrak{sl}_2 (\c)$-valued holomorphic $1$-form $F^{-1}dF$ satisfies
\[
\det(F^{-1}dF)=0\quad \text{at every point of $M$.}
\]
The term {\em null} was introduced by Bryant in
\cite[p.\ 328]{Bryant1987Asterisque} and refers to the fact that $F$ is null if and only if tangent vectors to $F(M)$ are null (or lightlike) for the Cartan-Killing metric on $\SL$; see e.g.\ \cite[Sec.\ 11.5]{JensenMussoNicolodi2016} for further discussion. 
Holomorphic null curves in $\SL$ are interesting on several grounds. A main reason why these curves have received attention in recent decades is that they project to the real hyperbolic 3-space $\h^3 =\SL / {\rm SU}_2$ as conformal immersions of constant mean curvature $1$ ($\CMC$) \cite[Theorem A]{Bryant1987Asterisque}, also called {\em Bryant surfaces}. 

The aim of this paper is to initiate and develop the theory of approximation for holomorphic null curves $M\to\SL$ from any open Riemann surface $M$. In particular, we shall establish analogues in this framework to the classical Runge, Mergelyan, Mittag-Leffler, and Carleman approximation theorems in complex analysis (see e.g.\ \cite{FFW18}). Our results also include jet interpolation of Weierstrass type and approximation by embeddings, as well as global conditions on the approximating curves, such as being (topologically) proper or complete (in the Riemannian sense). 

Here is a sampling of the kind of results that are obtained in this paper; see Theorem \ref{th:mt} and Corollary \ref{cr:Carleman} for much more precise statements.
%
%
\begin{theorem}\label{th:sampling}
Let $M$ be an open Riemann surface, $K\subset M$ a Runge compact subset, and $\Lambda$ and $E$ a pair of disjoint finite (possibly empty) subsets of 
$K$, and assume that $F:K\setminus E\to\SL$ is a holomorphic null immersion such that $F|_\Lambda$ is injective and $F$ extends meromorphically to $K$ with an effective pole at each point in $E$. Then, for any $\varepsilon>0$ and any integer $k>0$ there is a proper holomorphic null embedding $\wt F:M\setminus E\to\SL$ such that $\wt F-F$ is holomorphic on $K$, $|\wt F-F|<\varepsilon$ everywhere on $K$, and $\wt F-F$ vanishes to order $k$ at every point in $\Lambda\cup E$.
\end{theorem}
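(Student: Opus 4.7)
The plan is to derive Theorem~\ref{th:sampling} by a Forstneri\v c-style exhaustion induction fueled by the Mergelyan--Mittag-Leffler-with-jet-interpolation theorem stated as Theorem~\ref{th:mt}. Fix a normal exhaustion $K_1\Subset K_2\Subset\cdots$ of $M$ by Runge compact subsets with $K_1=K$, an exhaustion $B_1\Subset B_2\Subset\cdots$ of $\SL$ by Euclidean balls in $\C^4$, and positive numbers $\varepsilon_n\downarrow 0$ with $\sum_{n\ge 2}\varepsilon_n<\varepsilon$. Setting $F_1:=F$, I inductively construct a sequence of holomorphic null immersions $F_{n+1}:K_{n+1}\setminus E\to\SL$ with the properties: (i) $F_{n+1}$ extends meromorphically to $K_{n+1}$ with effective poles exactly at $E$; (ii) $F_{n+1}$ is an embedding on $K_n$; (iii) $F_{n+1}-F$ vanishes to order $k$ at every point of $\Lambda\cup E$; (iv) $|F_{n+1}-F_n|<\varepsilon_{n+1}$ on $K_n$, with $\varepsilon_{n+1}$ chosen smaller than the embedding margin of $F_n$ on $K_{n-1}$; and (v) $F_{n+1}(K_{n+1}\setminus K_n)\cap B_n=\emptyset$.

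The inductive step is the substantive one, and this is where Theorem~\ref{th:mt} does the heavy lifting. Properties (i), (iii), and (iv) are exactly what that theorem delivers: a null immersion on $K_{n+1}$ approximating $F_n$ arbitrarily well on $K_n$, with $k$-jet at $\Lambda\cup E$ matching $F$, and with prescribed meromorphic principal parts at $E$. To impose (v), one first extends $F_n$ across $K_{n+1}\setminus K_n$ by an auxiliary null model that sends a suitable finite collection of points in $K_{n+1}\setminus K_n$ well outside $B_n$, and then feeds this enlarged set as additional interpolation data into Theorem~\ref{th:mt}; by choosing the approximation tolerance tight enough, continuity forces the new approximant to avoid $B_n$ on the whole annulus $K_{n+1}\setminus K_n$. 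Property (ii) is then obtained by a general position argument: the double-point locus $\{(p,q)\in K_n\times K_n:p\ne q,\ F_{n+1}(p)=F_{n+1}(q)\}$ is cut out by an analytic equation of positive codimension in the deformation space of null immersions satisfying the imposed jet conditions, so a generic arbitrarily small perturbation of the approximant yields an embedding on the compact set $K_n$. Choosing $\varepsilon_{n+1}$ below the resulting embedding margin of $F_n$ makes (ii) stable under all later steps.

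Passing to the limit, $\{F_n\}$ converges uniformly on compacts of $M\setminus E$ to a map $\wt F:M\setminus E\to\SL$. Cauchy estimates transport the null condition $\det(F_n^{-1}dF_n)=0$ to the limit, so $\wt F$ is a holomorphic null immersion. Since $F_{n+1}-F_n$ vanishes to order $k$ at every point of $E$, the sequence shares common principal parts there, and $\wt F$ extends meromorphically to $M$ with effective poles at $E$. By (iv) one has $|\wt F-F_n|\le\sum_{j>n}\varepsilon_j$, which is below the embedding margin of $F_n$ on $K_{n-1}$; hence $\wt F$ is an embedding on every $K_{n-1}$ and therefore on all of $M\setminus E$. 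Properness then follows from two complementary facts: ends of $M\setminus E$ accumulating at $E$ escape to infinity in $\SL$ because the poles are effective, while every other end escapes by (v), which in the limit gives $\wt F(M\setminus K_n)\subset\SL\setminus B_{n-1}$. Finally $|\wt F-F|<\sum_{n\ge 2}\varepsilon_n<\varepsilon$ on $K$, and the jet condition (iii) survives passage to the limit.

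The principal obstacle is the rigidity of holomorphic null curves: they are cut out of $\SL$ by a first-order holomorphic PDE, so every admissible perturbation must itself be null, and one cannot freely adjust $F$ pointwise in the target. Theorem~\ref{th:mt} is precisely the analytic machinery that resolves this rigidity by granting simultaneous Runge-type approximation, Weierstrass-type jet interpolation, and prescription of meromorphic principal parts within the null class. Once that tool is available, the combination with a transversality argument (for embedding) and with a forced-escape step (for properness) is standard Forstneri\v c-type technology, and Theorem~\ref{th:sampling} follows as a consequence.
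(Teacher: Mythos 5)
Your scheme treats Theorem \ref{th:mt} purely as an approximation-plus-jet-interpolation device and then tries to rebuild embeddedness and properness by hand through an exhaustion induction. Note first that this is largely redundant: items \ref{mtiny} and \ref{mtp} of Theorem \ref{th:mt} already contain exactly these global conclusions, and Theorem \ref{th:sampling} follows from that theorem in one step. Indeed, since $F$ is (by the paper's convention) a null curve on an open neighborhood of $K$, one may enlarge $K$ to a smoothly bounded Runge compact domain $S$ inside that neighborhood with $\Lambda\cup E\subset\mathring S$; because $S$ is compact and $F$ has an effective pole at every point of $E$, the map $F\colon S\setminus E\to\SL$ is automatically proper, so Theorem \ref{th:mt} applied to $(F,dF/\theta)$ on $S$ yields an injective, proper (hence embedded) holomorphic null curve $\wt F\colon M\setminus E\to\SL$ with the stated approximation and jet conditions. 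Your induction, by contrast, amounts to re-proving \ref{mtiny} and \ref{mtp} from the approximation part of the same theorem, and this is where it breaks down.

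The genuine gap is your step (v). Interpolating large values at a finite collection of points of $K_{n+1}\setminus K_n$ (or even along arcs through them, via an auxiliary ``null model''), and then ``choosing the approximation tolerance tight enough'', gives no control whatsoever on the image of the rest of the annulus: the approximation furnished by Theorem \ref{th:mt} takes place only on the set $S$ you feed into it, and continuity of the approximant does not propagate a lower bound on $\|F_{n+1}\|_\infty$ from finitely many points or arcs to all of $K_{n+1}\setminus K_n$. Pushing the image of an entire annular region outside a large ball while approximating on the inner domain is precisely the content of Lemma \ref{lemma:proper}, and its proof is not a soft consequence of Runge--Mergelyan approximation with jet interpolation: one must decompose $L\setminus\mathring K$ into discs $D_j$ attached along arcs, extend the curve along those arcs with controlled values (Lemma \ref{lemma:ext}), and then perform a second deformation in which, on each piece, one coordinate function that is already large is held fixed (Lemma \ref{lemma:fix}) while the remaining ones are made large (Claim \ref{cl1}); only this piecewise scheme, with different coordinates doing the work on different pieces, yields $\|\wt F\|_\infty>\delta$ on all of $L\setminus\mathring K$ and $\|\wt F\|_\infty>1/\varepsilon$ on $bL$. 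Without an argument of this type your condition (v) cannot be achieved, and the properness of the limit map is unproven. Your embedding step has a similar character --- ``positive codimension in the deformation space'' is an assertion rather than a proof, since one must actually construct enough null deformations to run a transversality argument (this is the content of Lemma \ref{lemma:emb}) --- but that step is at least recoverable by invoking item \ref{mtiny} of Theorem \ref{th:mt} together with your embedding-margin bookkeeping, whereas the properness forcing is not recoverable from the approximation statement alone.
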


The results in this paper serve as a flexible tool for constructing examples of holomorphic null curves in $\SL$ and $\CMC$ surfaces in $\h^3$ with prescribed complex structure.
Some well-known widely used tools to construct and analyse these objects are the Bryant representation \cite{Bryant1987Asterisque} and the Small formula \cite{Small1994} (see also \cite{deLimaRoitman2002,KokubuUmeharaYamada2003,SaEarpToubiana2004}). Furthermore, these surfaces, which are locally isometric to minimal surfaces in Euclidean space $\r^3$ by the Lawson correspondence \cite{Lawson1970AM} (see also \cite{GalvezMira2008}), admit a spinor representation \cite{BobenkoPavlyukevichSpringborn2003} (see also \cite{MussoNicolodi2009}) and have been studied by the method of moving frames (see e.g.\   \cite[Ch.\ 11]{JensenMussoNicolodi2016} and \cite{Pirola2007AJM}). 
A first immediate consequence of Theorem \ref{th:sampling} worth noting is the following existence result for properly embedded holomorphic null curves in $\SL$ with arbitrary complex structure. It settles the problem posed by Alarc\' on and Forstneri\v c in \cite[Problem 1, p.\ 919]{AlarconForstneric2015MA}, which was the initial motivation for the investigation in this paper.
%
%
\begin{corollary}\label{co:properSL2C}
Every open Riemann surface admits a proper holomorphic null embedding into $\SL$.
\end{corollary}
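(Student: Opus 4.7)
The corollary is a direct specialization of Theorem \ref{th:sampling}. The plan is to apply the theorem with the simplest possible initial data and let the theorem do all of the work.

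Concretely, I would fix any Runge compact subset $K\subset M$ --- for instance, a smoothly bounded closed disk inside a holomorphic chart on $M$ --- and set $\Lambda=E=\emptyset$. To seed the approximation one needs a holomorphic null immersion $F\colon K\to\SL$. Such an $F$ is trivial to produce: $\SL$ admits the Bryant--Small type Weierstrass representation recalled in the introduction (cf.\ \cite{Bryant1987Asterisque, Small1994}), so a generic choice of holomorphic data on the disk $K$ already yields a holomorphic null immersion into $\SL$; equivalently, one may simply restrict to $K$ an explicit global holomorphic null immersion $\C\to\SL$.

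With this data in place, Theorem \ref{th:sampling} (applied with any $\varepsilon>0$ and, say, $k=1$) produces a proper holomorphic null embedding $\widetilde F\colon M\setminus E\to\SL$. Since $E=\emptyset$, the domain is all of $M$, which is exactly what the corollary asserts.

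In short, all of the substantive work is already encapsulated in Theorem \ref{th:sampling}; the only item to take care of in deriving the corollary is exhibiting the trivial seed immersion on a small compact disk, so there is no genuine obstacle to overcome at this stage.
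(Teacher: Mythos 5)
Your proposal is correct and is essentially the paper's own argument: the corollary is stated as an immediate consequence of Theorem \ref{th:sampling}, applied with $\Lambda=E=\varnothing$ and any trivial seed null immersion on a small Runge disk (e.g.\ the restriction of an explicit null curve $\C\to\SL$ such as $z\mapsto\left(\begin{smallmatrix}1&z\\0&1\end{smallmatrix}\right)$), the injectivity and pole hypotheses being vacuous. Nothing further is needed.
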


Set $\C^*=\C\setminus\{0\}$, as customary. Our method of proof relies on the fact pointed out by Mart\'in, Umehara, and Yamada in \cite[Sec.\ 3.1]{MartinUmeharaYamada2009CVPDE} that the biholomorphism $\Tcal: \C^2\times\C^*\to \SL\setminus \{z_{11}=0\}$ given by 
\begin{equation}\label{eq:t}
\Tcal(z_1,z_2,z_3)=\frac{1}{z_3}\left(\begin{matrix}
1 & z_1+\igot z_2\\
z_1-\igot z_2 & z_1^2+z_2^2+z_3^2
\end{matrix}\right),\quad (z_1,z_2,z_3)\in \C^2\times\C^*,
\end{equation}
takes holomorphic null curves in $ \C^2\times\C^*$ (see Section \ref{sec:lindf} for background) into holomorphic null curves in $\SL\setminus\{z_{11}=0\}$. The inverse holomorphic map $\Tcal^{-1}:\SL\setminus \{z_{11}=0\}\to \C^2\times\C^*$ (see \eqref{eq:t-1})
takes holomorphic null curves in $\SL\setminus\{z_{11}=0\}$ into holomorphic null curves in $ \C^2\times\C^*$ as well. Note that the biholomorphism $\Tcal$ is a restriction of a birational map between $\C^3$ and $\SL$. This fact, which has not been exploited until now in the study of null curves, will be a key novelty in our method of proof; more on this in what follows.

The transformation $\Tcal$ was already used by Alarc\'on and Forstneri\v c  in order to show that every bordered Riemann surface\footnote{A {\em bordered Riemann surface} $R$ is an open connected Riemann surface which is the interior $R=\overline R\setminus b\overline R$ of a compact one dimensional complex manifold $\overline R$ with smooth boundary $b\overline R\neq \varnothing$ consisting of finitely many closed Jordan curves.} $R$ admits a proper holomorphic null embedding $R\to\SL$ (with the range in $\SL\setminus\{z_{11}=0\}$) \cite[Corollary 2]{AlarconForstneric2015MA}; see 
\cite{Castro-InfantesHidalgo2024} for further results in this direction. The construction in \cite{AlarconForstneric2015MA} consists of finding a holomorphic null embedding $X:R\to\C^2\times\C^*$ with a so particular asymptotic behavior that $\Tcal\circ X:R\to\SL$ is a proper map. On the other hand,  the theory of interpolation for holomorphic null curves in $\SL$ was recently initiated in \cite{AlarconCastro-InfantesHidalgo2023}, where the authors and Castro-Infantes established a Weierstrass-Florack type theorem to the effect that given an open Riemann surface $M$, a closed discrete subset $\Lambda\subset M$, and a holomorphic null curve $F$ from a neighborhood of $\Lambda$ into $\SL\setminus\{z_{11}=0\}$, there is a complete holomorphic null immersion $\wt F:M\to\SL$ (with the range in $\SL\setminus\{z_{11}=0\}$) such that $\wt F-F$ vanishes to any given finite order $k(p)$ at each point $p\in \Lambda$. The proof in \cite{AlarconCastro-InfantesHidalgo2023} consists of using approximation and interpolation theory for holomorphic null curves in $\C^3$ (see \cite{AlarconForstnericLopez2021Book}) in order to interpolate $\Tcal^{-1}\circ F$ on $\Lambda$ by a holomorphic null immersion $X:M\to\C^2\times\C^*$, thereby ensuring that $\wt F=\Tcal\circ X:M\to \SL\setminus\{z_{11}=0\}$ interpolates $F$ on $\Lambda$ as well. As in the case of the properness in the aforementioned result from \cite{AlarconForstneric2015MA}, completeness of $\Tcal\circ X$ is ensured in this process by granting some extra fancy conditions on the  holomorphic null curve $X$ in $\C^2\times\C^*$. We refer to \cite{AlarconLopez2013MA,AlarconForstneric2015MA} for further results where the biholomorphism $\Tcal$ has played a key role.

Despite having shown to be fruitful, this approach to the construction of holomorphic null curves in $\SL$ has strong limitations, since one is forced to stay in $\SL\setminus\{z_{11}=0\}$ and $\C^2\times\C^*$. In particular, the question in  \cite[Problem 1]{AlarconForstneric2015MA} (now settled by Corollary \ref{co:properSL2C}) about the existence of proper examples remained open. In this paper we keep using the transformation $\Tcal$ back and forth in a fundamental way, but we proceed in a more accurate manner that enables us to use the whole space $\SL$ and even to deal with singular meromorphic null curves with prescribed singular locus and polar set (see Remark \ref{rk:E}). This allows, as a first outcome, to develop a genuine theory of approximation for holomorphic null curves in $\SL$, which is the main purpose of this paper. Then, with this in hand, adding global properties such as completeness or properness to the approximating curves becomes a fairly standard task (see Section \ref{sec:lproper} for further discussion).

Let us conclude this introduction by pointing out a couple more applications of Theorem \ref{th:sampling}. First, concerning Bryant surfaces, since ${\rm SU}_2$ is compact, the canonical projection $\SL\to\h^3=\SL/{\rm SU}_2$ is a proper map, so it takes proper holomorphic null curves in $\SL$ into proper conformal $\CMC$ immersions. 
Thus, the following 
result is a consequence of Theorem \ref{th:sampling} and the fact that every simply connected Bryant surface lifts to a holomorphic null curve in $\SL$ \cite{Bryant1987Asterisque}. 
%
%
\begin{corollary}[Weak Runge Theorem for Bryant surfaces]
\label{co:bryant}
Let $M$ be an open Riemann surface, $K\subset M$ a simply connected Runge compact subset, and $\Lambda\subset K$ a finite subset. Then, every conformal $\CMC$ immersion $\varphi:K\to\h^3$ can be approximated uniformly on $K$ by proper conformal $\CMC$ immersions $M\to\h^3$ agreeing with $\varphi$ to any given finite order on $\Lambda$.

In particular, every open Riemann surface is the underlying complex structure of a proper $\CMC$ surface in $\h^3$.
\end{corollary}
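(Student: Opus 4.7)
The plan is to pass the problem from $\h^3$ to $\SL$ via the canonical projection $\pi\colon\SL\to\h^3=\SL/\mathrm{SU}_2$, apply Theorem~\ref{th:sampling} there, and then project back down.

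Since $K$ is simply connected, Bryant's representation \cite[Theorem~A]{Bryant1987Asterisque} furnishes a holomorphic null immersion $F\colon K\to\SL$ lifting $\varphi$, i.e.\ $\pi\circ F=\varphi$; such a lift is unique up to left multiplication by a constant element of $\mathrm{SU}_2$. I would then apply Theorem~\ref{th:sampling} to $F$ with $E=\varnothing$, the given set $\Lambda$, and the prescribed $\varepsilon>0$ and integer $k>0$. This yields a proper holomorphic null embedding $\wt F\colon M\to\SL$ such that $|\wt F-F|<\varepsilon$ on $K$ and $\wt F-F$ vanishes to order $k$ at every point of $\Lambda$.

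Set $\wt\varphi:=\pi\circ\wt F\colon M\to\h^3$. Compactness of $\mathrm{SU}_2$ makes $\pi$ a proper map, hence $\wt\varphi$ is proper. Because $\wt F$ is a holomorphic null immersion, Bryant's theorem shows that $\wt\varphi$ is a conformal $\CMC$ immersion. The uniform closeness of $\wt F$ to $F$ on $K$ descends via the smooth map $\pi$ to uniform closeness of $\wt\varphi$ to $\varphi=\pi\circ F$ on $K$, and the $k$-th order vanishing of $\wt F-F$ on $\Lambda$ projects to equality of $k$-jets of $\wt\varphi$ and $\varphi$ at each point of $\Lambda$. The ``in particular'' assertion then follows by composing with $\pi$ the proper holomorphic null embedding $M\to\SL$ provided by Corollary~\ref{co:properSL2C}.

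The only delicate point is the injectivity of $F|_\Lambda$ required by Theorem~\ref{th:sampling}, which may fail if $\varphi$ itself identifies two points of $\Lambda$ (the lift $F$ is rigid up to a single global left translation and cannot be adjusted pointwise). I would handle this by invoking the more precise statement promised in Theorem~\ref{th:mt}, which drops the injectivity hypothesis at the cost of producing only a proper null immersion rather than an embedding; this suffices, since the corollary demands immersion and not embedding. Everything else in the argument is essentially a formal consequence of having a proper Riemannian submersion $\pi$ with isometric compact fibres sitting over the approximation theorem already established in $\SL$.
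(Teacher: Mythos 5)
Your argument is correct and follows essentially the same route as the paper: lift $\varphi$ to a holomorphic null curve on (a neighborhood of) the simply connected Runge compact $K$, apply the approximation theorem in $\SL$ — where, as you note, Theorem~\ref{th:mt} is the right tool when $F|_\Lambda$ fails to be injective, its properness conclusion \ref{mtp} being available since any continuous map from the compact set $K$ is proper and its injectivity hypothesis being needed only for embeddedness — and then push down by the proper projection $\pi_H$, which preserves uniform closeness, jet agreement on $\Lambda$, and the $\CMC$ property. The only (immaterial) slip is that with $\pi_H(A)=A\,\overline{A}^T$ the lift of $\varphi$ is unique up to \emph{right}, not left, multiplication by a constant element of $\mathrm{SU}_2$.
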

The second assertion in Corollary \ref{co:bryant} settles the first (less ambitious) question in \cite[Problem 1]{AlarconForstneric2015MA}. The compact set $K\subset M$ here (as well as that in Theorem \ref{th:sampling}) need not be connected. We do not know whether the assumption that $K$ is simply connected can be removed from the statement of the corollary; this is why this result is weaker than a honest Runge type theorem.  Nevertheless, our more general results in Section \ref{sec:ms} lead to more precise existence, approximation, and interpolation theorems for $\CMC$ surfaces in $\h^3$ with arbitrary conformal structure and control on the asymptotic behavior; see Theorem \ref{th:Bryant}.

On the other hand, holomorphic null curves in $\SL$ project also to de Sitter 3-space $\S_1^3=\SL/{\rm SU}_{1,1}$ as $\CMC$ {\em faces} (that is, spacelike conformally immersed surfaces of constant mean curvature 1 with admissible singularities in $\S_1^3$); see \cite[Def.\ 1.4 and Theorem 1.9]{Fujimori06}, \cite[Def.\ 1.1]{FKKRUY13}, and also Section \ref{sec:bryant}, for more details. Conversely, every simply connected $\CMC$ face in $\S_1^3$ lifts to a holomorphic null curve in $\SL$. Moreover, complete holomorphic null curves in $\SL$ project to {\em weakly complete} $\CMC$ faces in $\S_1^3$ according to \cite[Def.\ 1.3]{FRUYY09}.  Since proper null curves in $\SL$ are complete, the following easily follows from Theorem \ref{th:sampling}; see Theorem \ref{th:deSitter} for a more precise statement.
%
%
\begin{corollary}[Weak Runge Theorem for  $\CMC$ faces]
\label{co:deSitter}
If $M$, $K$, and $\Lambda$ are as in Corollary \ref{co:bryant}, then every $\CMC$ face $\phi:K\to\s_1^3$ can be approximated uniformly on $K$ by weakly complete $\CMC$ faces $M\to\s_1^3$ agreeing with $\phi$ to any given finite order on $\Lambda$.
\end{corollary}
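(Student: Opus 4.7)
The plan is to transfer the statement from $\s_1^3$ to $\SL$ by lifting $\phi$, applying Theorem \ref{th:sampling}, and then projecting back via the canonical map $\pi : \SL \to \s_1^3$. The three ingredients needed are exactly those recalled in the paragraph preceding the corollary, so the work reduces to checking that the approximation and interpolation data transfer cleanly under lifting and projection.

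First I would use that $K$ is simply connected to lift $\phi$. Choosing an open Runge neighborhood $\Ucal$ of $K$ that is still simply connected, the lifting fact for simply connected $\CMC$ faces (cf.\ \cite{Fujimori06, FKKRUY13}) yields a holomorphic null immersion $F : \Ucal \to \SL$ with $\pi \circ F = \phi$. If $F|_\Lambda$ happens to be non-injective, a small preliminary holomorphic null deformation of $F$, supported so as not to alter the prescribed $k$-jets on $\Lambda$, separates the images of the finitely many points of $\Lambda$; the resulting perturbation can be absorbed into the final $\varepsilon$. I would then apply Theorem \ref{th:sampling} to this $F$ with $E = \varnothing$, the given $\Lambda$, and parameters $\varepsilon' > 0$ and $k$, obtaining a proper holomorphic null embedding $\wt F : M \to \SL$ with $|\wt F - F| < \varepsilon'$ on $K$ and $\wt F - F$ vanishing to order $k$ at every point of $\Lambda$.

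Setting $\wt\phi := \pi \circ \wt F : M \to \s_1^3$, the projection property recalled above ensures that $\wt\phi$ is a $\CMC$ face. Properness of $\wt F$ in $\SL$ implies its completeness (as noted in the text immediately preceding the corollary), and complete null curves project to weakly complete $\CMC$ faces in the sense of \cite[Def.\ 1.3]{FRUYY09}, so $\wt\phi$ is weakly complete. For the approximation statement I would invoke uniform continuity of $\pi$ on a compact neighborhood of $F(K)$, so that choosing $\varepsilon'$ small enough produces the prescribed uniform closeness of $\wt\phi$ to $\phi$ on $K$; order-$k$ agreement on $\Lambda$ descends through $\pi$ by the chain rule. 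I expect the only mildly delicate point to be the injectivity condition on $F|_\Lambda$ demanded by Theorem \ref{th:sampling}; as in the proof of Corollary \ref{co:bryant}, this is bypassed either by the preliminary perturbation above or by a direct appeal to the refined Theorem \ref{th:mt}.
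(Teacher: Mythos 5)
Your route is the paper's route: lift $\phi$ from the simply connected Runge compact $K$ to a holomorphic null curve in $\SL$, approximate with jet interpolation by a proper (hence complete) holomorphic null curve on $M$, and project by $\pi_S$ to get a weakly complete $\CMC$ face; the paper gives no more detailed argument than this, deferring precision to Theorem \ref{th:deSitter}. One step as written, however, does not work: a ``preliminary deformation that separates the images of the points of $\Lambda$ without altering the prescribed $k$-jets on $\Lambda$'' is self-contradictory, since fixing even the $0$-jets at $\Lambda$ fixes the values $F(p)$, $p\in\Lambda$, so no such deformation can create injectivity of $F|_\Lambda$. Drop that option and keep your second one: apply Theorem \ref{th:mt} directly (with $E=\varnothing$, $S$ a slightly enlarged admissible neighborhood of $K$). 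Its conclusions \ref{mt-}--\ref{mtint} need no injectivity of $F|_\Lambda$ (injectivity is only used to get an injective $\wt F$, which the corollary does not claim), and since $K$ is compact the restriction $F|_S$ is automatically proper, so \ref{mtp} yields a proper, hence complete, null curve $\wt F:M\to\SL$. Finally, note that $\pi_S\circ\wt F$ is a $\CMC$ face only when the norm of the secondary Gauss map of $\wt F$ is not identically $1$ (Section \ref{sec:bryant}); this holds because the lift of the $\CMC$ face $\phi$ has this property and it persists under sufficiently close approximation on $K$, a point worth one sentence in your write-up. With these adjustments the argument is correct and coincides with the paper's.
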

Since the Lie group ${\rm SU}_{1,1}\subset \SL$ (see \eqref{eq:SU11}) is not compact, the projection $\SL\to\s_1^3=\SL/{\rm SU}_{1,1}$ is not a proper map, and it remains an open question whether every open Riemann surface $M$ admits a proper $\CMC$ face $M\to\s_1^3$; this is actually unknown even for bordered Riemann surfaces. Partial results regarding this question were obtained in \cite{AlarconCastro-InfantesHidalgo2023,Castro-InfantesHidalgo2024}.

%
%
\subsubsection*{Organization of the paper} 
In Section \ref{sec:lemmas} we introduce preliminaries and state three key lemmas to be used in the proof of the main results of the paper. These lemmas are proved in Sections \ref{sec:lindf} to \ref{sec:c3}.  In Section \ref{sec:ms} we state our main results on approximation for holomorphic null curves in $\SL$, and prove them via the lemmas in Section \ref{sec:lemmas}. Finally, Section \ref{sec:bryant} is devoted to Bryant surfaces and $\CMC$ faces. 

%
%
\section{Preliminaries and key lemmas}\label{sec:lemmas}
%
%
\subsection{Preliminaries and notation}\label{subsec:notation}

We write $\n = \{1, 2,3,\ldots\}$ and $\Z_+ = \n\cup\{0\}$. We shall denote $\igot = \sqrt{-1}\in\C$, by $| \cdot |$ and $\dist ( \cdot, \cdot)$ the Euclidean norm and distance on $\C^n$ $(n \in \n)$, respectively, and by $\| \cdot \|_{\infty}$ the infinity norm on $\C^n$.
Given two maps $f,g \colon Y \to Z$ between sets, we write $f \equiv g$ to denote that $f(y) = g(y)$ for all $y \in Y$; we write $f \not \equiv g$ otherwise. 

Throughout the paper and whenever convenient, we shall identify the space $\Mcal_2 (\C)$ of $2\times 2$ complex matrices and $\C^4$ by
\begin{equation}\label{eq:c4}
\Mcal_2 (\C) \ni 	z = \left( \begin{array}{cc}
z_{11} & z_{12}\\
z_{21}& z_{22} 
\end{array} \right) \longleftrightarrow z = (z_{11}, z_{12}, z_{21}, z_{22}) \in \C^4.
\end{equation} 
Note that a holomorphic immersion $F:M\to\SL\subset\C^4$ from an open Riemann surface $M$ into $\SL$
is null if and only if 
it is {\em directed} by the quadric variety
\begin{equation}\label{eq:nullquadricsl2}
\Agot =\left\lbrace z = \left(\begin{matrix}
z_{11}& z_{12}\\
z_{21} & z_{22}
\end{matrix}\right) \in \C^4 \colon \det z = z_{11} z_{22} - z_{12} z_{21}=0 \right\rbrace,
\end{equation}
meaning that the derivative $F'\colon M\to\C^4$ of $F$ with respect to any local holomorphic coordinate on $M$ assumes its values in $\Agot_* :  = \Agot \setminus\{0\}$, that is, $\det (F')\equiv 0$. 
%
%
\begin{definition}[\text{\cite[Def.\ 1.12.9]{AlarconForstnericLopez2021Book}}]
\label{def:admissible}
An \emph{admissible set} in a smooth surface $M$ is a compact set of the form $S = K \cup \Gamma \subset M $, where $K$ is a (possibly empty) finite union of pairwise disjoint compact domains with smooth boundaries in $M$, and $\Gamma= S \setminus \mathring K =\overline{S\setminus K}$ is a (possibly empty) finite union of pairwise disjoint smooth Jordan arcs and closed Jordan curves meeting $K$ only at their endpoints (if at all) and such that their intersections with the boundary $bK$ of $K$ are transverse. 
\end{definition}

Let $M$ be an open Riemann surface,  $N$ a complex manifold, and $\varnothing\neq S \subset M$ a subset.
We denote by $\Cscr^r(S,N)$, $r\in \Z_+$,  the space of all maps $S\to N$ of class $\Cscr^r$ on a neighborhood of $S$ in $M$ and write $\Cscr^r (S) = \Cscr^r (S, \C)$.
If $f \in \Cscr^r (S)^n$ ($n \in \N$) and $S$ is compact, we denote by $\| f \|_{r,S}$ the $\Cscr^r$ maximum norm of $f$ in $S$; if $S$ is not compact then we set $\| f \|_{r,S}:=\sup\{\|f\|_{r,K}\colon K\subset S$, $K$ compact$\}$.
We write $\Oscr (S, N)$ for the space of all holomorphic maps from some neighborhood of $S$ in $M$ into $N$. We set $\Oscr(S) = \Oscr(S, \C)$, whereas $\Oscr_{\infty} (S)$ will denote the space of meromorphic functions on some neighborhood of $S$ in $M$. Note that $\Oscr (S) \subset \Oscr_{\infty} (S)\subset \Oscr(S,\cp^1)$.  
Given a closed discrete subset $E \subset \mathring{S}$, we set 
\[
\Oscr_{\infty} ( S | E) = \Oscr_{\infty} (S) \cap \Oscr (S\setminus E). 
\]
Assume in the sequel that $S\subset M$
is a locally connected closed set in $M$ all whose connected components are admissible sets (Definition \ref{def:admissible}).
We set
\[
\Ascr^r (S, N) = \Cscr^r (S, N) \cap \Oscr(\mathring{S}, N) ,  \quad \Ascr^r (S)= \Ascr^r(S, \C),
\]
and
\[
\Ascr_{\infty}^r (S) = \Cscr^r (S, \C \P^1) \cap \Cscr^r (b S, \C) \cap \Oscr_{\infty} (\mathring{S}).
\]
By the identity principle, 
a function $f \in \Ascr^r_{\infty}(S)$ has at most finitely many poles on each component of $S$, all of which lie in $\mathring{S}$.
Given a 
set $E\subset\mathring S$ having a finite intersection with each component of $S$, we denote
\[
\Ascr^r_{\infty} (S| E) = \Ascr^r_{\infty} (S) \cap \Oscr (\mathring{S} \setminus E).
\]
For a map $f = (f_1, \hdots, f_n ) \in \Cscr^0 (S, \C \P^1 )^n$ and $n \in \N$, we write 
\begin{equation}\label{eq:f-1}
f^{-1} ( \infty) = \bigcup_{j=1}^n f_j^{-1} (\infty).
\end{equation}
If in addition $N\subset \C^n$ $(n \in \N)$ is a complex submanifold, then we also set:
\begin{itemize}
\item $\Oscr_{\infty} (S, N) = \{ f \in \Oscr_{\infty} (S)^n \colon f( S \setminus f^{-1} (\infty) ) \subset N \}$.
\item $\Ascr^r_{\infty} (S, N) = \{f \in \Ascr^r_{\infty} (S)^n \colon f (S \setminus f^{-1} (\infty)) \subset N  \}$.
\item $\Oscr_{\infty} (S | E, N) = \Oscr_{\infty} (S, N)  \cap \Oscr ( S \setminus E, N)$.
\item $\Ascr^r_{\infty} (S | E , N) = \Ascr^r_{\infty} (S, N) \cap \Oscr ( \mathring{S} \setminus E, N)$.
\end{itemize}

Recall that a compact subset $K$ of an open Riemann surface $M$ is said to be \emph{Runge} (also {\em holomorphically convex} or {\em $\Oscr(M)$-convex}) if every function in $\Ascr (K)=\Cscr^0(K)\cap\Oscr(\mathring K)$ may be approximated uniformly on $K$ by functions in $\Oscr(M)$. By the Runge-Mergelyan theorem  \cite[Theorem 5]{FFW18} this is equivalent to $M\setminus K$ having no relatively compact connected components in $M$. 
In order to state a Mergelyan type theorem for holomorphic null curves in $\SL$, we need the following notion.
%
%
\begin{definition}
\label{def:gmncsl}
Let $S = K \cup \Gamma$ be an admissible set (Definition \ref{def:admissible}) in an open Riemann surface $M$, $\theta$  a nowhere vanishing holomorphic $1$-form on a neighborhood of $S$, and $E\subset \mathring{K}=\mathring{S}$ a finite subset. A {\em generalized null curve $S\setminus E \to \SL$ of class $\Ascr^r_{\infty} (S|E)$ $(r \in \n)$} is a pair $(F ,f \theta)$ with $F \in \Ascr^r_{\infty} (S | E,  \SL)$  and $f \in \Ascr^{r-1}_{\infty} ( S|E, \mathbf{\Agot}_*)$ (see \eqref{eq:nullquadricsl2}) satisfying the following.
\begin{itemize}
\item $f \theta = dF$ holds on $K \setminus E$ (so $F \colon \mathring{K} \setminus E \to \SL$ is a holomorphic null curve). 
\item For a smooth path $\alpha$ in $M$ parameterizing a connected component of $\Gamma$ we
have $ \alpha^* (f \theta) = \alpha^* (d F)= d ( F \circ \alpha)$.
\item $f^{-1} (\infty) \subset E$ (see \eqref{eq:f-1}).
\end{itemize}
This notion naturally extends to the case when $S\subset M$ is a locally connected closed set all whose connected components are admissible sets, by asking the pair $(F ,f \theta)$ to be a generalized null curve on each component of $S$. 
\end{definition}
We shall say that a holomorphic null curve which is also a topological embedding is a {\em holomorphic null embedding}. Given a subset $W \subset M$,
we will say that a map $W \to \SL$ is a {\it holomorphic null curve} if it extends to a holomorphic null curve on some open neighborhood of $W$ in $M$, and that a map $W \to \C^n$ is {\em flat} if its image is contained in an affine complex line in $\C^n$, and {\em nonflat} otherwise. A map $W \to \SL$ is nonflat if it is nonflat as a map into $\C^4$; see \eqref{eq:c4}.

%
%
\subsection{Key lemmas} 

The following three lemmas, whose proofs are deferred to Sections \ref{sec:lindf} to \ref{sec:c3}, are the fundamental new tools that we exploit in the proof of the main result in this paper; see Theorem \ref{th:mt} in Section \ref{sec:ms}.
%
%
\begin{lemma}[Semiglobal approximation] \label{lemma:indf} 
Let $M$ be an open Riemann surface, $\theta$ a nowhere vanishing holomorphic $1$-form on $M$, $S = K \cup \Gamma \subset M$ a Runge admissible subset (see Definition \ref{def:admissible}), $\Lambda$ and $E$ disjoint finite subsets of $\mathring{S}$, $L \subset M$ a smoothly bounded compact domain with $S  \subset \mathring L$, and $(F , f \theta)$ a generalized null curve $S\setminus E \to \SL$ of class $\Ascr_{\infty}^r (S | E)$ $(r \in \N)$
(see Definition \ref{def:gmncsl}).
Then, given $\varepsilon>0$ and $k\in\n$, there exists a nonflat holomorphic null curve $\wt F: L\setminus E \to \SL$ of class $\Oscr_\infty(L|E, \SL)$ satisfying the following.
\begin{enumerate}[label = \rm (\roman*)]
\item \label{f-}$\wt F  - F$ is continuous on $S$, hence of class $\Ascr^r(S)^4$. 
\item \label{fap}  $\| \wt F  - F \|_{r, S} < \varepsilon$.
\item \label{fint}$\wt F - F$ vanishes to order $k$ at every point of $\Lambda \cup E$. 
\newcounter{l23'}\setcounter{l23'}{\value{enumi}}
\end{enumerate}
\end{lemma}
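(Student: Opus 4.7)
The plan is to reduce to the analogous semiglobal approximation/interpolation statement for holomorphic null curves in $\C^2\times\C^*$ (available from the Runge-Mergelyan-Florack theory for null curves in $\C^3$ in \cite{AlarconForstnericLopez2021Book}) and import it to $\SL$ via the biholomorphism $\Tcal$ of \eqref{eq:t}. Since $\Tcal$ is defined only on $\SL\setminus D$ with $D:=\{z_{11}=0\}$ and $F$ may meet $D$ on $S$, the key new ingredient is to exploit the left translation action $F\mapsto AF$ of $\SL$ on itself, which preserves nullness (left translation preserves the Killing form) and transforms $D$ into $A^{-1}D$. A generic $A$ moves the bad locus off any prescribed finite set, and by using two different such choices we can cover $S$ by two admissible pieces on each of which the composition with $\Tcal^{-1}$ is well defined.

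First I would achieve nonflatness of $F$ by a small generic perturbation on $\mathring S$, so that $z_{11}\circ F$ is a nonconstant meromorphic function on each component of $S$ and the bad set $Z:=F^{-1}(D)\cap S$ is finite. Pick $A\in\SL$ in general position so that $Z':=(AF)^{-1}(D)\cap S$ is finite and disjoint from $Z\cup\Lambda\cup E$. Decompose a slightly enlarged Runge admissible set $S^*\supset S$ in $\mathring L$ as $S^*=K_0\cup K_1\cup \Gamma^*$, where $K_0,K_1\subset\mathring L$ are disjoint admissible sets with $Z\subset\mathring K_0$, $Z'\cap K_0=\varnothing$, $S\setminus\mathring K_0\subset\mathring K_1$, and $Z\cap K_1=\varnothing$, and $\Gamma^*$ is a finite collection of smooth Jordan arcs in $\mathring L\setminus(Z\cup Z'\cup\Lambda\cup E)$ bridging $K_0$ and $K_1$.

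Next I would handle $K_1$ via the chart $\Tcal$: on a neighborhood of $K_1$ the composition $X_1:=\Tcal^{-1}\circ F$ is a generalized null curve of class $\Ascr^r_\infty(K_1|E\cap K_1)$ into $\C^2\times\C^*$. The semiglobal theorem for null curves in $\C^3$ from \cite{AlarconForstnericLopez2021Book}, applied on a smoothly bounded compact domain $L_1$ with $K_1\subset\mathring L_1\subset L_1\subset\mathring L\setminus K_0$, produces a nonflat meromorphic null curve $\wt X_1\in\Oscr_\infty(L_1|E\cap L_1,\C^2\times\C^*)$ which $\Cscr^r$-approximates $X_1$ on $K_1$ and matches it to order $k$ at $(\Lambda\cup E)\cap K_1$. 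If the approximation is close enough, then $\wt F_1:=\Tcal\circ\wt X_1\in\Oscr_\infty(L_1|E\cap L_1,\SL)$ avoids $D$ on $L_1$, and $A\wt F_1$ avoids $D$ on a neighborhood of the endpoints of $\Gamma^*$ meeting $K_1$.

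To extend $\wt F_1$ across $K_0$ I would switch to $A$-coordinates. On $K_0$ the map $AF$ avoids $D$ (since $K_0\cap Z'=\varnothing$), so $\Tcal^{-1}(AF)$ is a generalized null curve in $\C^2\times\C^*$ of class $\Ascr^r_\infty(K_0|E\cap K_0)$; on $L_1$ the map $A\wt F_1$ avoids $D$, so $\Tcal^{-1}(A\wt F_1)$ is meromorphic on $L_1$ with poles on $E\cap L_1$ and holomorphic into $\C^2\times\C^*$ elsewhere. Amalgamate them into a single generalized null curve $Y$ of class $\Ascr^r_\infty(S^*|E)$ by prescribing, along each arc of $\Gamma^*$, a smooth null parameterization matching the two sides to order $r$ at its endpoints. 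A second application of the semiglobal theorem for null curves in $\C^3$, now on $L$, produces a meromorphic null curve $\wt Y\in\Oscr_\infty(L|E,\C^2\times\C^*)$ that $\Cscr^r$-approximates $Y$ on $S^*$ and interpolates to order $k$ at $\Lambda\cup E$. Setting $\wt F:=A^{-1}\,\Tcal(\wt Y)\in\Oscr_\infty(L|E,\SL)$ gives the desired null curve. The principal obstacle is the amalgamation step: the two $\C^3$-valued pieces must be combined into one generalized null curve of class $\Ascr^r_\infty$ on an admissible set, which requires placing the bridging arcs off the bad loci and the interpolation data and matching the prescribed $\Cscr^r$ jets at their endpoints; this is feasible because the smooth part of the null cone of $\C^3$ is a two-dimensional complex manifold, and the class $\Ascr^r_\infty$ on admissible sets permits free smooth null parameterizations on Jordan arcs.
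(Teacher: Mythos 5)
Your idea of moving the divisor $D=\{z_{11}=0\}$ by a group translation and passing back and forth through the chart $\Tcal$ is in the right spirit (the paper uses the unipotent shears of Remark \ref{rk:flambda} for exactly this purpose), but as written the argument has two genuine gaps. The first is a tool mismatch: in both applications you ask the ``semiglobal theorem for null curves in $\C^3$'' to produce approximants $\wt X_1$ and $\wt Y$ of class $\Oscr_\infty(\,\cdot\,|E,\C^2\times\C^*)$, i.e.\ with third component zero-free off the prescribed pole set. The Runge--Mergelyan/Mittag-Leffler results in \cite{AlarconForstnericLopez2021Book} (or \cite{AlarconLopez2019,AlarconVrhovnik2024X}) give null curves into $\C^3$ with no control whatsoever on the zero divisor of the third component; the approximant may acquire zeros of $X_3$ in $L\setminus S$ (Hurwitz only helps near $S$, and even there only after imposing sufficiently high interpolation orders), and after composing with $\Tcal$ each such zero becomes a pole at an unprescribed point, so the resulting curve is not of class $\Oscr_\infty(L|E,\SL)$. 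Supplying precisely this control is condition \ref{condc} of Lemma \ref{lemma:ind}, which is the paper's main technical contribution (proved in Section \ref{sec:c3} via a divisor-preserving approximation into $\C^*$ and a period-dominating spray); it cannot be quoted off the shelf.

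The second gap is that the amalgamation/final-chart step fails. Writing the final curve as $\wt F=A^{-1}\Tcal(\wt Y)$ with $\wt Y$ holomorphic into $\C^2\times\C^*$ on $L\setminus E$ forces $(A\wt F)_{11}=1/\wt Y_3$ to be zero-free on $L\setminus E$. But $(AF)_{11}=a_{11}F_1+a_{12}F_3$ depends only on the first row of $A$, and since the map $[F_1:F_3]$ may have image covering all of $\CP^1$, no choice of $A$ makes it zero-free on the two-dimensional set $K$; genericity only gives a finite zero set $Z'\subset\mathring S$ avoiding $bK\cup\Gamma$ and prescribed finite sets, which in your setup lies in $K_1$. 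Consequently $(A\wt F_1)_{11}$, being close to $(AF)_{11}$ on $K_1$, has zeros near $Z'$ by Hurwitz, so your assertion that $A\wt F_1$ avoids $D$ on $L_1$ is false, the amalgamated $Y$ has poles of its third component at points of $K_1$ not in $E$, and hence $Y\notin\Ascr^r_\infty(S^*|E)$, so the second application with pole set $E$ is not legitimate; for the same reason no curve of the form $A^{-1}\Tcal(\wt Y)$ can be uniformly close to $F$ on $S$ at all. Any repair must enlarge the pole set by the crossing points with $D$ and then recover holomorphy of the $\SL$-curve across them by high-order jet interpolation combined with the zero-set control --- which is exactly the paper's route: a shear $F\mapsto F^\lambda$ making $F_1+\lambda F_3$ nonvanishing on the one-dimensional set $bK\cup\Gamma$ only, the enlarged set $E_0=E\cup F_1^{-1}(0)$, Lemma \ref{lemma:ind} with $\wt X_3^{-1}(0)=X_3^{-1}(0)=\varnothing$, and transfer back by $\Tcal$, with no two-chart amalgamation needed.
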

%
%
\begin{lemma}[Embeddedness]\label{lemma:emb}
Let $M$ be an open Riemann surface, $K \subset M$ a smoothly bounded compact domain, $\Lambda$ and $E$ disjoint finite subsets of $\mathring{K}$, and $F : K \setminus E \to \SL$ a nonflat holomorphic null curve of class $\Oscr_\infty(K|E, \SL)$.
 If $F|_{\Lambda}$ is injective, then for any $\varepsilon>0$,  $k\in \N$, and $r \in \Z_+$ there is a nonflat holomorphic null embedding $\wt F : K \setminus E \to \SL$ of class $\Oscr_\infty(K|E, \SL)$ satisfying the following conditions.
\begin{enumerate}[label = \rm (\roman*)]
\item \label{f-em}$\wt F  - F$ is continuous on $K$, hence holomorphic.
\item \label{fapem}  $\| \wt F  - F \|_{r, K} < \varepsilon$.
\item \label{fintem}$\wt F - F$ vanishes to order $k$ at every point of $\Lambda \cup E$. 
\end{enumerate}
\end{lemma}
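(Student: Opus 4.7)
The plan is to prove Lemma~\ref{lemma:emb} by a general-position argument: construct a finite-dimensional holomorphic spray of null perturbations of $F$ meeting the required interpolation and approximation conditions, and show that for a generic parameter value the resulting null curve is simultaneously an immersion and injective on $K\setminus E$. This is the standard embeddedness strategy for directed holomorphic curves, adapted here to the Lie-group target $\SL$.

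First I would fix a nowhere vanishing holomorphic $1$-form $\theta$ on a neighborhood of $K$ and write $dF=f\theta$ with $f\in\Oscr_\infty(K|E,\Agot_*)$. Embeddedness amounts to two open generic conditions in the $\Cscr^r$-topology: the immersion condition $f(p)\neq 0$ for all $p\in K\setminus E$, and the injectivity condition $F(p)\neq F(q)$ for all distinct $p,q\in K\setminus E$. To realize genericity I would construct a holomorphic spray $F_t:K\setminus E\to\SL$, $t$ in a small ball $B\subset\C^N$, with $F_0=F$, each $F_t$ a nonflat null curve of class $\Oscr_\infty(K|E,\SL)$, $F_t-F$ holomorphic on $K$ and small in $\Cscr^r(K)$-norm for $t$ close to $0$, and $F_t-F$ vanishing to order $k$ on $\Lambda\cup E$. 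I would build the spray by right-multiplying $F$ with a finite product of exponentials of the form $\exp(t_jh_j\xi_j)$, where $\xi_j\in\mathfrak{sl}_2(\C)$ is nilpotent and $h_j\in\Oscr(K)$ vanishes to order $k$ on $\Lambda\cup E$; the parameters are then corrected by a finite-dimensional period-killing term supplied by Lemma~\ref{lemma:indf} so that the perturbed Maurer--Cartan form $F_t^{-1}dF_t$ remains null and closed and hence integrates to a genuine map into $\SL$. The base points of the cutoffs $h_j$ and the directions $\xi_j$ are chosen so that the spray is dominating at any prescribed finite list of points of $K\setminus E$.

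With such a spray available I would form the difference map
\[
\Phi(p,q,t)=F_t(p)\,F_t(q)^{-1}\in\SL,\qquad (p,q,t)\in\bigl((K\setminus E)^2\setminus\Delta\bigr)\times B,
\]
and the differential map $\Psi(p,t)=f_t(p)\in\Agot$, where $dF_t=f_t\theta$. Domination of the spray implies that $\Phi(p,q,\cdot)$ is a submersion onto $\SL$ at $t=0$ for every fixed $p\neq q$, and $\Psi(p,\cdot)$ is a submersion onto a neighborhood of $f(p)$ in $\Agot$ for every fixed $p$. Since $\{I\}\subset\SL$ has complex codimension $3$ and the off-diagonal of $K\times K$ has complex dimension $2$, the preimage $\Phi^{-1}(I)$ is a complex-analytic subvariety of its domain of positive codimension, so its projection to $B$ misses a dense subset of the ball; the same argument applied to $\Psi^{-1}(0)$ handles the immersion condition. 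A standard exhaustion together with extraction of a generic $t\in B$ close to $0$ then produces $\wt F=F_t$ with the required properties.

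The main obstacle is the spray construction itself: unlike in the $\C^n$ case, where null perturbations arise from dominating sprays of null vector fields on the target, null perturbations of a map into $\SL$ are constrained by the Maurer--Cartan equation on $F^{-1}dF$, and the natural nilpotent directions in $\mathfrak{sl}_2(\C)$ must be combined with period corrections so that the result is a genuine single-valued null curve into $\SL$ with the prescribed meromorphy at $E$ and the prescribed tangency at $\Lambda\cup E$. This is precisely where Lemma~\ref{lemma:indf} (and, implicitly, the biholomorphism $\Tcal$ together with the $\C^3$ theory of Section~\ref{sec:c3}) enters.
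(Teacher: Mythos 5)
There is a genuine gap at the heart of your construction: the proposed spray $F_t=F\cdot\prod_j\exp(t_jh_j\xi_j)$, with $\xi_j\in\mathfrak{sl}_2(\C)$ nilpotent and $h_j$ non-constant holomorphic functions, does not consist of null curves. Writing $\eta=F^{-1}dF$ and $G=F\exp(th\xi)$, one has $G^{-1}dG=\exp(-th\xi)\,\eta\,\exp(th\xi)+t\,dh\,\xi$, and since for trace-free $2\times 2$ matrices $\det(A+B)=\det A+\det B-\tr(AB)$, the null condition for $G$ reduces to $t\,dh\,\tr(\eta\,\xi)\equiv 0$. For a nonflat $F$ the form $\tr(\eta\,\xi)$ is not identically zero (with $\xi=\left(\begin{smallmatrix}0&1\\0&0\end{smallmatrix}\right)$ it is the lower-left entry of $\eta$, whose identical vanishing forces $\eta\in\C\xi$), so nullity is destroyed for every $t\neq 0$ unless $dh\equiv 0$. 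Only right (or left) multiplication by \emph{constant} matrices preserves nullity, which is exactly the content of Remark \ref{rk:flambda}. Moreover, this failure cannot be repaired by the ``period-killing term supplied by Lemma \ref{lemma:indf}'': nullity is a pointwise algebraic constraint on $F^{-1}dF$, not a monodromy or period condition, and Lemma \ref{lemma:indf} is a Mergelyan-type approximation statement for a single generalized null curve --- it produces neither a parametrized dominating family of null curves nor any control on double points. Since the entire transversality scheme (submersivity of $\Phi(p,q,\cdot)$ and $\Psi(p,\cdot)$ at $t=0$ for all pairs) rests on having such a dominating null spray in $\SL$, the argument does not get off the ground as written; the dimension-count part, while standard, also needs care near the diagonal, near $E$, and near $bK$, which you do not address.

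For comparison, the paper sidesteps spray constructions in $\SL$ altogether. After a shear $F\mapsto F^{\lambda}$ (Remark \ref{rk:flambda}, with $\lambda$ chosen by a transversality argument so that $F_1^{-1}(0)\subset\mathring K$), it transfers the curve to $\C^3$ via $\Tcal^{-1}$, invokes the known embedding theorem with jet interpolation for null curves in $\C^3$ \cite[Theorem 4.1]{AlarconVrhovnik2024X} --- where dominating sprays are available because perturbations live on the null quadric $\mathbf{A}_*$ --- and uses Hurwitz's theorem to keep the third component zero-free so that composing back with $\Tcal$ is legitimate; this yields a curve whose double points all lie in the finite set $F_1^{-1}(0)\cap D_F$. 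A second shear, with $\lambda$ chosen so that the new first component does not vanish on that finite set, followed by one more application of the same claim, removes all double points. If you want to salvage your plan, the honest route is to build the dominating spray on the $\C^3$ side through $\Tcal$, but at that point you are essentially reproducing the paper's proof (or re-proving the cited $\C^3$ theorem) rather than giving a genuinely different argument.
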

%
%
\begin{lemma}[Properness]\label{lemma:proper}
Let $M$ be an open Riemann surface, $K$ and $L$ smoothly bounded Runge compact domains in $M$ with $K\subset  \mathring{L}$, $\Lambda$ and $E$ disjoint finite subsets of $\mathring{K}$, and $F \colon K \setminus E \to \SL$ a nonflat holomorphic null curve of class $\Oscr_\infty(K|E, \SL)$.
For any $\varepsilon>0$,  $k\in\n$, $r \in \Z_+$, and
\[
	0 < \delta < \min \{ \| F (p) \|_{\infty}  \colon p \in b K \},
\]
there exists a nonflat holomorphic null curve $\wt{F}  \colon L\setminus E \to \SL$ of class $\Oscr_\infty(L|E, \SL)$ such that:
\begin{enumerate}[label= \rm (\roman*)]
\item \label{condaf} $\wt F - F$ is continuous on $K$, hence holomorphic.
\item \label{fpap} $\| \wt{F} - F \|_{r,K} < \varepsilon$.
\item \label{condbf} $\wt{F} - F$ vanishes to order $k$ at every point of $\Lambda \cup E$. 
\item \label{delta}  $\| \wt F (p) \|_{\infty} > \delta$ for all $p \in L \setminus \mathring{K}$.
\item \label{1eps}  $\| \wt F (p) \|_{\infty} > 1 / \varepsilon$ for all $p \in bL$. 
\end{enumerate}
\end{lemma}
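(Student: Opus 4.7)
My plan is a Morse-theoretic induction that exhausts $L\setminus\mathring K$ by a finite chain of smoothly bounded Runge compact domains, invoking the semiglobal approximation Lemma~\ref{lemma:indf} at each step to enlarge the null curve and push its image toward infinity on the growing boundary. First I would choose a smooth function $\rho$ on a neighborhood of $L$ with $K=\{\rho\le 0\}$ and $L=\{\rho\le 1\}$ having only index-$1$ critical points in $\rho^{-1}((0,1))$; such $\rho$ exists because $L\setminus\mathring K$ is a compact surface with boundary. I would pick regular values $0=t_0<t_1<\cdots<t_N=1$ so that each slab $\rho^{-1}([t_j,t_{j+1}])$ contains at most one critical point, and set $K_j:=\{\rho\le t_j\}$, which are smoothly bounded Runge compact domains in $M$ with $K_0=K$ and $K_N=L$. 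Then fix strictly increasing constants $\delta<\delta_0<\delta_1<\cdots<\delta_N=1/\varepsilon$ with $\delta_0<\min_{p\in bK}\|F(p)\|_\infty$, positive constants $\varepsilon_1,\ldots,\varepsilon_N$ summing to less than $\min\{\varepsilon,\delta_0-\delta\}$, and put $r':=\max(r,1)$. The goal is to construct inductively nonflat null curves $F_j\in\Oscr_\infty(K_j|E,\SL)$ with $F_0=F$ satisfying, for every $j\ge 1$: (a) $F_j-F_{j-1}$ extends holomorphically to $K_{j-1}$ with $\|F_j-F_{j-1}\|_{r',K_{j-1}}<\varepsilon_j$; (b) $F_j-F$ vanishes to order $k$ at every point of $\Lambda\cup E$; (c) $\|F_j(p)\|_\infty>\delta_j$ for $p\in bK_j$ and $\|F_j(p)\|_\infty>\delta$ for $p\in K_j\setminus\mathring K$. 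Setting $\wt F:=F_N$ will then yield conclusions (i)--(v) of the lemma.

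For the inductive step $j\to j+1$, I would form a Runge admissible set $S_{j+1}=K_j\cup\Gamma_{j+1}\subset\mathring K_{j+1}$, where $\Gamma_{j+1}$ is a finite disjoint union of smooth Jordan arcs attached transversally to $bK_j$ and ending in $\mathring K_{j+1}$, chosen disjoint from $\Lambda\cup E$. In the noncritical case ($K_{j+1}\setminus\mathring K_j$ is a union of annular collars) I would pick enough arcs in each collar so that their outer endpoints form a fine $\eta$-net on a curve close and parallel to $bK_{j+1}$. In the critical case, I would first include an extra arc $\gamma^*$ through the saddle so that $K_j\cup\gamma^*$ has the same topology as $K_{j+1}$ and $S_{j+1}$ remains Runge in $M$, then add the net of arcs as before. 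Next, I would extend the null data from $K_j$ to $S_{j+1}$ as a generalized null curve $(G_{j+1},g_{j+1}\theta)$ (Definition~\ref{def:gmncsl}) by prescribing, along each arc, a smooth $\Agot_*$-valued map $g_{j+1}$ of controlled $\Cscr^{r'-1}$ norm and with appropriate initial value at the $bK_j$ endpoint, so that the integrated value of $G_{j+1}$ at the far endpoint has $\|\cdot\|_\infty>2\delta_{j+1}$. Such a prescription exists because $\Agot$ contains the standard basis of $\C^4$, so the null cone spans $\C^4$ and arbitrary target values can be reached by integrating appropriate null-valued paths. Applying Lemma~\ref{lemma:indf} to $(G_{j+1},g_{j+1}\theta)$ on $S_{j+1}\subset\mathring K_{j+1}$, with interpolation data at $\Lambda\cup E$, then produces a nonflat $F_{j+1}\in\Oscr_\infty(K_{j+1}|E,\SL)$ satisfying $\|F_{j+1}-G_{j+1}\|_{r',S_{j+1}}<\varepsilon_{j+1}$ and $F_{j+1}-F$ vanishing to order $k$ at $\Lambda\cup E$; properties (a) and (b) then follow immediately.

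The delicate point, and the main obstacle, will be enforcing condition (c) on the full collar $K_{j+1}\setminus\mathring K_j$, not merely at the finitely many arc endpoints. Preservation on $K_j$ from the previous step follows from the triangle inequality together with the summability of the $\varepsilon_j$. On the new collar the argument combines three ingredients: first, the $\Cscr^{r'}$-closeness on $S_{j+1}$ (with $r'\ge 1$) gives a derivative bound for $F_{j+1}$ along each arc of $\Gamma_{j+1}$ matching the bounded $\Cscr^{r'-1}$ norm of $g_{j+1}$; second, since $F_{j+1}$ is holomorphic on a fixed open neighborhood of $K_{j+1}$, Cauchy estimates propagate this derivative bound to a transverse tubular neighborhood of each arc, yielding a uniform Lipschitz constant for $F_{j+1}$ on a fattening of the arc net; third, taking the arc net $\eta$-dense with $\eta$ small relative to this Lipschitz constant and $\delta_{j+1}$ propagates the lower bound $\|F_{j+1}\|_\infty>\delta_{j+1}$ from arc endpoints to $\|F_{j+1}\|_\infty>\delta$ throughout the collar. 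A useful complement here is that in any local chart of $\SL$ given by $\Tcal$ (see \eqref{eq:t}) on which $F_{j+1}$ avoids $\{z_{11}=0\}$, the pullback coordinate $z_3$ is a zero-free holomorphic function, so $\log|z_3|$ is harmonic and its maximum principle promotes boundary bounds on $\|F_{j+1}\|_\infty$ to the interior of the collar. Iterating over $j=0,\ldots,N-1$ then yields $\wt F=F_N$ satisfying (i)--(v).
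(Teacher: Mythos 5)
Your overall scheme (Morse exhaustion of $L\setminus\mathring K$, attaching arcs, extending the null datum along them, and invoking Lemma \ref{lemma:indf}) is the right skeleton, but the step you yourself flag as delicate --- promoting the lower bound from the one-dimensional arc net to the whole two-dimensional collar --- does not work, and this is precisely the point where the paper's proof has to do something structurally different. Lemma \ref{lemma:indf} controls $F_{j+1}$ only on the admissible set $S_{j+1}$; on the open discs of $K_{j+1}\setminus S_{j+1}$ you have no bound at all, and your three ingredients cannot supply one. Cauchy estimates go in the wrong direction: they bound derivatives of a holomorphic map on a set in terms of sup-bounds of the map on a \emph{larger} open set, so knowledge of $F_{j+1}$ and finitely many derivatives along an arc gives no quantitative control of $F_{j+1}$ at nearby points off the arc, hence no ``uniform Lipschitz constant on a fattening of the arc net''. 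Likewise, $\log\|F_{j+1}\|_\infty=\log\max_i|(F_{j+1})_i|$ is subharmonic, so it satisfies a maximum principle but no minimum principle: a lower bound on the boundary of a disc between two arcs does not propagate to its interior (and in the chart \eqref{eq:t-1} the pulled-back coordinate is $z_3=1/z_{11}$, so a lower bound on $|z_3|$ is an \emph{upper} bound on $|F_1|$, the opposite of what you need). This is the classical obstruction in all proofs of this type: one cannot push an entire disc towards infinity in $\SL$ by controlling only a skeleton or its boundary.

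The paper resolves this with a two-step deformation that genuinely uses the geometry of $\SL$. After the skeleton step one only records that a \emph{single, specified} component $\wh F_{\tau(j)}$ is large on $\overline{D_j\setminus\Omega_j}$ (labels $\alpha_j,\beta_j,\gamma_j,D_j,\Omega_j$ as in Figure \ref{fig:proper}), and one arranges its zeros to lie in $\mathring K\cup\bigcup_j\mathring\Omega_j$; then a second deformation, via Claim \ref{cl1} and Lemma \ref{lemma:fix}, keeps that component \emph{identically fixed} (so the lower bound on $\overline{D_j\setminus\Omega_j}$ survives exactly, not just approximately) while making the remaining components large on $\Omega_j$, which is possible because a map into $\SL$ has no common zero of $(F_1,F_2)$ (determinant $1$). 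Nothing in your proposal plays the role of Lemma \ref{lemma:fix} or Claim \ref{cl1}, and without such a mechanism the bound (iv)--(v) cannot be obtained. A smaller, fixable point: your extension of the datum along arcs ``by integrating null-valued paths, since $\Agot$ spans $\C^4$'' ignores that $F$ must stay in the hypersurface $\SL$, not just in $\C^4$; this is what Lemma \ref{lemma:ext} (proved through the chart $\Tcal$) is for.
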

Note that the given curve $F$ in Lemmas \ref{lemma:indf}, \ref{lemma:emb}, and \ref{lemma:proper} need not have effective poles at the points in $E$, that is, it could extend holomorphically to some of them. 

Let us also record here the following elementary observation.
%
%
\begin{remark}\label{rk:flambda}
If $M$ is an open Riemann surface, $F = (F_1, F_2, F_3, F_4) \colon M \to \SL$ is a holomorphic null curve (see \eqref{eq:c4}), and $\lambda \in \C$, then the maps 
\[
\left(\begin{matrix}
F_1  + \lambda F_3 & F_2 + \lambda F_4 \\
F_3  & F_4
\end{matrix}\right) , \quad 
\left(\begin{matrix}
F_1  & F_2\\
F_3 + \lambda F_1& F_4 + \lambda F_2 
\end{matrix}\right), 
\]
\[
\left(\begin{matrix}
F_1  + \lambda F_2 & F_2\\
F_3 + \lambda F_4 & F_4
\end{matrix}\right), \quad \text{and}\quad
\left(\begin{matrix}
F_1   & F_2 + \lambda F_1 \\
F_3  & F_4 + \lambda F_3
\end{matrix}\right)
\]
are holomorphic null curves  $M \to \SL$.
It is clear that if $F^\lambda$ is any of these maps and $p, q \in M$, then $F(p) = F(q)$ if and only if $F^\lambda (p) = F^\lambda (q)$. 
These statements also hold for meromorphic null curves and for generalized null curves in $\SL$. 
\end{remark}

%
%

\section{Approximation by holomorphic null curves in $\SL$}\label{sec:ms}
\noindent
In this section we present the main results in the paper, giving Theorem \ref{th:sampling} as a special case. We begin with the following analogues for holomorphic null curves in $\SL$ of the classical Runge-Behnke-Stein-Mergelyan-Bishop approximation theorem for holomorphic functions on open Riemann surfaces \cite{Runge1885,BehnkeStein1949,Mergelyan1951,Bishop1958PJM} combined with the Mittag-Leffler and the Weierstrass-Florack interpolation theorems \cite{Mittag-Leffler1884AM,Weierstrass1885, Florack1948}. We refer to \cite{FFW18} for a survey on holomorphic approximation. 
%
%
\begin{theorem}\label{th:mt}
Let  $M$ be an open Riemann surface, $\theta$ a nowhere vanishing holomorphic $1$-form on $M$, $S \subset M$ a locally connected closed subset all whose connected components are Runge admissible compact sets, and $\Lambda$ and $E$ disjoint closed discrete subsets of $M$ with $\Lambda \cup E \subset \mathring S$, and assume that  $(F , f \theta)$ is a generalized null curve
$S\setminus E\to\SL$ of class $\Ascr^r_{\infty}(S|E)$ $(r \in \N)$.
Then, for any $\varepsilon>0$ and any map $k\colon \Lambda \cup E \to \Z_+$ there is a nonflat holomorphic null curve $\wt F : M \setminus E \to \SL$ of class $\Oscr_\infty(M|E, \SL)$ satisfying the following conditions.
\begin{enumerate}[label = \rm (\alph*)]
\item \label{mt-}  $\wt F - F$ is continuous on $S$, hence of class $\Ascr^r(S)^4$. 
\item \label{mtapp}$\| \wt F - F \|_{r, S} < \varepsilon$.
\item \label{mtint} $\wt F - F$ vanishes to order $k(p)$ at every point $p\in \Lambda \cup E$.
\item  \label{mtiny} If $F|_\Lambda$ is injective, then we can choose $\wt F :  M \setminus E \to \SL$ to be injective.
\item \label{mtap} If $F$ has an effective pole at every point in $E$, then we can choose $\wt F : M \setminus E \to \SL$ to be an almost proper map\footnote{A map between topological spaces is {\em almost proper} if connected components of preimages of compact sets are compact.}, hence a complete immersion.
\item  \label{mtp} If $F : S \setminus E \to \SL$ is proper, 
then we can choose $\wt F : M \setminus E \to \SL$ to be proper. 
\newcounter{MT}\setcounter{MT}{\value{enumi}}
\end{enumerate}
\end{theorem}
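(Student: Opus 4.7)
The plan is to prove Theorem \ref{th:mt} by a standard inductive exhaustion argument driven by the three key lemmas of Section \ref{sec:lemmas}. First, choose a normal exhaustion $M_1 \Subset M_2 \Subset \cdots$ of $M$ by smoothly bounded Runge compact domains. Since the connected components of $S$ are compact and locally finite in $M$, we can arrange the exhaustion so that, for each $n$, the compact set $T_n := (S \cap M_n) \cup M_n$ is a Runge admissible subset of $\mathring{M}_{n+1}$, consisting of $M_n$ together with the finitely many components of $S$ intersecting $M_n$ (taken in full). Let $\Lambda_n := \Lambda \cap M_n$ and $E_n := E \cap M_n$; these are finite and exhaust $\Lambda$ and $E$ respectively.

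Next, construct inductively a sequence of generalized null curves $(F_n, f_n\theta)$ on $T_n$, with $(F_0, f_0\theta) := (F, f\theta)|_{T_0}$. Given $(F_n, f_n\theta)$, apply Lemma \ref{lemma:indf} with $(S, \Lambda, E, L) = (T_n, \Lambda_n, E_n, M_{n+1})$, a small error $\varepsilon_n > 0$ with $\sum_n \varepsilon_n < \varepsilon / 2$, and interpolation order $k(p)+n$ at each $p \in \Lambda_n \cup E_n$, producing a nonflat holomorphic null curve $\wt F_n$ of class $\Oscr_\infty(M_{n+1} | E_n, \SL)$. Define $(F_{n+1}, f_{n+1}\theta)$ on $T_{n+1}$ by gluing $\wt F_n$ with $(F, f\theta)$ on those components of $S$ not yet contained in $M_{n+1}$. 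By summability of $\{\varepsilon_n\}$, the sequence $\{F_n\}$ converges in $\Cscr^r$ on every compact subset of $M \setminus E$ and of $S$ to a meromorphic null curve $\wt F$ on $M$, holomorphic on $M \setminus E$, which yields conclusions (a)--(c). The interpolation orders accumulate correctly because the prescribed vanishing at each $p$ grows by one at each subsequent step.

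For the additional conclusions (d)--(f), insert one further lemma application in each inductive step. For (d), after applying Lemma \ref{lemma:indf}, use Lemma \ref{lemma:emb} to perturb the resulting $\wt F_n$ into an embedding on $M_n$ while preserving the approximation and interpolation on $T_n$; choosing $\varepsilon_n$ small compared with the separation of $\wt F_n(p)$ and $\wt F_n(q)$ for distinct $p,q \in M_n$, a Hurwitz-type argument forces the limit $\wt F$ to be injective on $M \setminus E$. For (f), further apply Lemma \ref{lemma:proper} with the same $\varepsilon_n \to 0$ to arrange $\| \wt F_n(p) \|_\infty > 1/\varepsilon_n$ on $bM_{n+1}$; combined with the effective poles at $E$ (preserved by the interpolation) this yields properness of $\wt F : M \setminus E \to \SL$. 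For the weaker (e), the same lemma is applied with only a fixed positive lower bound $\delta$ on $M_{n+1} \setminus \mathring M_n$ at each step, creating barriers between consecutive levels; together with the effective poles at $E$ this suffices so that every connected component of $\wt F^{-1}(L)$, for $L \subset \SL$ compact, is contained in some $M_n$, hence compact in $M \setminus E$.

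The main obstacle is the joint bookkeeping: fitting the three lemma applications together within each inductive step so that none undoes the others' effects, while preserving the pole structure at $E$ and the interpolation conditions. The most delicate point is the preservation of injectivity in the limit near $E$, where $\wt F$ is unbounded: this requires choosing $\varepsilon_n$ quantitatively small relative to the separation of $\wt F_n$ on each compact set disjoint from $E$, a control that must be propagated through the induction. Once these choices are made consistently, all of (a)--(f) follow.
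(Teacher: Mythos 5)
Your scheme for conclusions (a)--(d) is essentially the paper's: exhaust $M$ by Runge compact domains, adjoin at each stage the new pieces of $S$ carrying the original datum $F$, apply Lemma \ref{lemma:indf} and Lemma \ref{lemma:emb} with summable errors, and force injectivity of the limit by a quantitative stability condition imposed at each step. (The paper makes your ``separation'' control precise: the literal separation $\inf_{p\neq q}|\wt F_n(p)-\wt F_n(q)|$ on $M_n$ is zero, so one instead requires that every holomorphic map $2\varepsilon_n$-close to $F^n$ in the $\Cscr^r$ norm be an embedding on $M_n\setminus\Delta_n$, where the $\Delta_n$ are shrinking neighborhoods of $E$ with $\bigcup_n(M_n\setminus\Delta_n)=M\setminus E$; this exists by Cauchy estimates.) Two bookkeeping points need repair but are harmless: arrange $S\cap bM_n=\varnothing$, so that no component of $S$ straddles $bM_{n+1}$ --- otherwise your ``gluing'' of $\wt F_n$ with $F$ on a component meeting but not contained in $M_{n+1}$ is ill defined on the overlap --- and take the exceptional set at stage $n$ to be $E\cap T_n$ rather than $E\cap M_n$, since a full component of $S$ included in $T_n$ may contain poles of $F$ lying outside $M_n$.

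The genuine gap is in (e) and (f), where your two mechanisms are essentially interchanged and, as stated, both fail. For (f), lower bounds $\|\wt F_n\|_\infty>1/\varepsilon_n$ imposed only on the curves $bM_{n+1}$ can never give properness: a divergent sequence staying inside the annuli and away from the boundary curves is uncontrolled; growing boundary-only barriers together with the poles at $E$ are exactly the almost-properness mechanism, i.e.\ they prove (e), not (f). Properness requires lower bounds tending to $+\infty$ on the whole regions $M_n\setminus(\mathring M_{n-1}\cup E)$, which is what conclusion \ref{delta} of Lemma \ref{lemma:proper} provides, and in addition divergence of $\wt F$ along $S$ itself, where $\wt F$ is forced to stay $\varepsilon$-close to the prescribed $F$; this is where the hypothesis that $F|_{S\setminus E}$ is proper --- which your argument never uses --- is indispensable. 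Concretely, in Lemma \ref{lemma:proper} the admissible height $\delta$ is capped by $\min\{\|F^{n-1}(p)\|_\infty\colon p\in bK\}$ with $K$ containing the newly adjoined component $S_n$ of $S$, so the barrier heights are governed by $\mu_n=\tfrac12\min\{\|F\|_\infty \text{ on } S_n\setminus E\}$, and only properness of $F|_{S\setminus E}$ guarantees $\mu_n\to\infty$. Conversely, for (e) a fixed $\delta>0$ on the annuli gives nothing: for a compact $L\subset\SL$ containing matrices of norm larger than $\delta$, a connected component of $\wt F^{-1}(L)$ may cross every annulus, so almost properness does not follow. The correct argument uses boundary barriers $\|F^n\|_\infty>n$ on $bM_n$ --- possible precisely because the $bM_n$ are disjoint from $S$, so these bounds do not conflict with approximating a possibly non-proper $F$ --- together with the effective poles at $E$, which keep preimage components of compact sets away from $E$; a component avoiding all $bM_n$ with $n\ge n_0$ is then trapped in $M_{n_0}$ or in a single annulus, hence compact.
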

%
%
\begin{remark}\label{rk:E}
Under the natural assumption that $F:S\setminus E\to\SL$ does not extend as a holomorphic immersion to any point of $E$, we have that $E$ is the disjoint union $E = E_{\infty} \cup E_{\rm sing}$, where 
$E_{\infty}\subset E$ is the set of those points in $E$ where the meromorphic extension of $F$ to $\mathring S$ has an effective pole and $E_{\rm sing}= E\setminus E_\infty$ is the set where the extended $F$ is holomorphic and has a singular point (that is, those points of $E$ where the meromorphic extension of $f$ to $\mathring S$ has a zero). Therefore, $F$ is holomorphic on $M\setminus E_\infty$ with the singular locus $E_{\rm sing}$. In this situation, the holomorphic null curve $\wt F : M \setminus E \to \SL$ provided by Theorem \ref{th:mt} extends holomorphically to $M\setminus E_\infty$ with the singular locus $E_{\rm sing}=\{p\in M\setminus E_\infty\colon d\wt F_p=0\}$ and has an effective pole at every point in $E_\infty$.
Moreover, the holomorphic map $\wt F : M \setminus E_{\infty} \to \SL$ can always be chosen to be almost proper (cf.\ \ref{mtap}) and, if the given $F:S \setminus E_{\infty} \to \SL$ is proper, then $\wt F : M \setminus E_{\infty} \to \SL$ can be chosen to be a proper map (cf.\ \ref{mtp}).
\end{remark}
%
%
\begin{remark}\label{rk:intbs}
A minor modification of the proof of Theorem \ref{th:mt} allows to choose the interpolation set $\Lambda\subset M$ with $\Lambda \subset S$ and $\Lambda \cap (S \setminus \mathring{S}) \not = \varnothing$, whenever $k (p) =  0$ for every $p \in \Lambda  \setminus \mathring{S}$. This upgrade is well understood in the framework of minimal surfaces in $\r^n$ and holomorphic null curves in $\C^n$, $n\ge 3$; see e.g.\ \cite[Theorem 7.1 and Proof of Lemma 3.3]{AlarconVrhovnik2024X}. The details in our case are very similar and we omit them.
\end{remark}
%
%
\begin{proof}[Proof of Theorem \ref{th:mt} assuming Lemmas \ref{lemma:indf}, \ref{lemma:emb}, and \ref{lemma:proper}]
The assumptions guarantee that $S$ consists of at most countably many connected components.
We assume for simplicity of exposition that $S$ has infinitely many components; the proof is otherwise simpler. Likewise, we assume that $F|_{\Lambda}$ is injective and that $F : S \setminus E \to \SL$ is a proper map (in particular, $F$ has an effective pole at every point in $E$). In this special case, condition \ref{mtp} implies condition \ref{mtap}; we shall explain how to ensure \ref{mtap} in the general case at the end of the proof.

Fix $0 <\varepsilon_0 < \varepsilon/2$, let $M_0\subset M$ be a smoothly bounded simply connected compact domain with $S_0:= S \cap M_0= \varnothing$, and choose any holomorphic null embedding $F^0 : M_0 \to \SL$. 
Since $F^0$ has range in $\SL$, 
there is a number $\mu_0>0$ such that $\| F^0 (p)\|_{\infty} > \mu_0$ for all $p \in b M_0$ (we may choose, for instance, $\mu_0=1/2$).
Let 
\begin{equation}\label{eq:exh}
 M_1 \Subset M_2 \Subset \cdots \subset \bigcup_{n\in\n} M_n = M
\end{equation}
be an exhaustion of $M$ by connected smoothly bounded Runge compact domains such that $M_0 \subset \mathring M_1$ and for each $n\in\n$ we have that $S \cap b M_n = \varnothing$  and $\mathring M_n\setminus M_{n-1}$ contains a single connected component 
\begin{equation}\label{eq:sn}
	S_n := S \cap (M_n\setminus \mathring M_{n-1})
\end{equation}  
of $S$. Such an exhaustion exists by the assumptions on $S$.
It follows that $S \cap M_n = \bigcup_{j=1}^n S_j$ for all $n \in \N$ and $S = \bigcup_{n \in\N} S_n$. Also choose a sequence of smoothly bounded open simply connected domains $ \Delta_n \Subset M_n$ such that $E\cap M_n \subset \Delta_n$ and $ \Delta_n \cap \Lambda = \varnothing$ for each $n \in \N$. Moreover, make sure that $\overline{\Delta_{n+1}}\cap M_n \subset \Delta_n$ for every $n \in \N$ and
\begin{equation}\label{eq:Dn}
	\bigcup_{n\in\N} (M_n \setminus \Delta_n) = M \setminus E,
\end{equation}
equivalently, $\bigcap_{j\ge n}(\Delta_j\cap M_n)=E\cap M_n$ for all $n\in \N$.
Set
\begin{equation}\label{eq:defmun}
	\mu_n =\frac12 \min \{  \| F(p) \|_\infty : p \in S_n \setminus E\} >\frac14 >0,  \quad  n \in \N;
\end{equation} 
recall that $F$ has an effective pole at each point of $E$.
Since $F:S\setminus E\to \SL\subset\C^4$ is a proper map, we have that 
\begin{equation}\label{eq:mun}
	\lim_{n \to \infty} \mu_n = +\infty.
\end{equation}
To complete the proof, we shall inductively construct sequences of numbers $\varepsilon_n >0$  and nonflat holomorphic null embeddings $F^n : M_n \setminus E \to \SL$ extending meromorphically to $M_n$ with an effective pole at each point in $E\cap M_n$, and satisfying the following conditions for every $n\in \N$.
\begin{enumerate}[label=(\alph*$_n$)]
\item \label{seq-} $F^n - F^{n-1}$ is continuous (hence holomorphic) on $M_{n-1}$ and $F^n - F$ is continuous on $\bigcup_{j=0}^n S_j$ (hence of class $\Ascr^r(\bigcup_{j=0}^n S_j)^4$).
\item \label{seqa} $\|F^n - F^{n-1}\|_{r, M_{n-1}}< \varepsilon_{n-1}$ and $\|F^n - F\|_{r, S_n}< \varepsilon_{n-1}$. 
\item \label{seqi} $F^n - F$ vanishes to order $k(p)$ at every point $p \in (\Lambda \cup E ) \cap M_n$.
\item \label{seqep} $0<\varepsilon_{n}<\varepsilon_{n-1}/2$ and if $G \colon M_n \setminus E \to \SL$ is a holomorphic map such that $\| G-F^n \|_{r,M_n\setminus E}< 2 \varepsilon_n$, then $G$ is a nonflat embedding on $ M_n \setminus \Delta_n$.  
\newcounter{mtc}\setcounter{mtc}{\value{enumi}}
\item  \label{seqp1} $\|F^n (p)\|_{\infty} > \mu_n$ for all $p \in b M_n$.
\item  \label{sep2} $\| F^n (p) \|_{\infty} > \min \{ \mu_{n-1}, \mu_n \}$ for all $p \in M_n \setminus (\mathring M_{n-1} \cup E)$. 
\end{enumerate}
Suppose for a moment that such sequences exist. Then, by \eqref{eq:exh},  \ref{seq-}, \ref{seqa}, and \ref{seqep}, there is a limit holomorphic null curve
\[
\wt F:= \lim_{n\to \infty} F^n :M \setminus E \to \SL,
\]
such that $\wt F - F^n$ is continuous on $M_n$, $\wt F - F$ is continuous on $S = \bigcup_{n\ge 0} S_n$,
\begin{equation}\label{eq:eps}
\| \wt F - F^n \|_{r, M_n} < 2 \varepsilon_n<\varepsilon, \quad\text{and}\quad \| \wt F - F\|_{r, S_n} < 2 \varepsilon_{n-1} <  \varepsilon  \text{ for all } n \in \N.
\end{equation}
We claim that $\wt F$ satisfies the conclusions of the theorem.
Indeed, properties \ref{mt-} and \ref{mtapp} follow from \eqref{eq:eps} since $S=\bigcup_{n\ge1}S_n$. Likewise, \ref{mtint} follows from \eqref{eq:exh} and \ref{seqi}.
By \ref{seqep} and \eqref{eq:eps}, $\wt F$ is a nonflat embedding on $M_n \setminus \Delta_n$ for every $n \in \N$, hence $\wt F$ is injective on $M$ by \eqref{eq:Dn}, showing \ref{mtiny}.
Since $F : S \setminus E \to \SL$ is proper, condition  \ref{mtapp} ensures that $\wt F|_{S \setminus E} : S \setminus E \to \SL$ is proper as well. 
Therefore to check \ref{mtp}, it suffices to see that $\{\wt F (p_m)\}_{m\in \N}$ diverges in $\SL$ for every sequence $\{p_m \}_{m\in\N} \subset M \setminus S$ diverging on $M$. 
For this, \ref{sep2} and \eqref{eq:eps} imply that $| \wt F | > \min \{ \mu_{n-1}, \mu_n\} -\varepsilon$ everywhere on $M_n \setminus (\mathring M_{n-1} \cup E)$ for every $n \in \N$.
Thus, since  $\lim_{n \to \infty} \min\{\mu_{n-1}, \mu_n\} = +\infty$ by  \eqref{eq:mun}, condition \eqref{eq:exh} ensures that $\{\wt F (p_m)\}_{m \in \N}$ diverges for any sequence $\{p_m\}_{m \in \N}$ as above.

It remains to explain the induction. The base case is provided by the already chosen $\varepsilon_0$ and $F^0$, which satisfy (\hyperref[seqi]{\rm c$_{0}$}),  (\hyperref[seqp1]{\rm e$_{0}$}), and the second part of (\hyperref[seq-]{\rm a$_{0}$}),
while the other conditions are void.  For the inductive step, fix $n \in \N$ and suppose that we have a number $\varepsilon_{n-1} >0$ and a nonflat holomorphic null embedding $F^{n-1} : M_{n-1} \setminus E \to \SL$ extending meromorphically to $E\cap M_{n-1}$ with an effective pole at each point in $E\cap M_{n-1}$, and satisfying (\hyperref[seqi]{c$_{n-1}$}),  (\hyperref[seqp1]{e$_{n-1}$}), and the second part of (\hyperref[seq-]{\rm a$_{n-1}$}).
Let us then find $\varepsilon_n>0$ and a holomorphic null embedding $F^n : M_n \setminus E \to \SL$ extending meromorphically to $E\cap M_n$ with an effective pole at every point in $E\cap M_n$ and satisfying all conditions \ref{seq-}--\ref{sep2}. 

Set  $M_n ' = M_{n-1} \cup S_n\subset \mathring M_n$, which is a disjoint union; see \eqref{eq:sn} and recall that $S \cap (b M_{n-1} \cup b M_n) = \varnothing$. 
Note that $M_n '$ is a
Runge admissible compact subset and $(\Lambda \cup E)  \cap M_n ' = (\Lambda \cup E )  \cap M_n\subset \mathring M_n '$. Extend $F^{n-1}$ to $M_n ' \setminus E$ by setting 
\[
F^{n-1}(p)= F(p)\quad  \text{for all  }p\in S_n \setminus E.
\]
By (\hyperref[seqp1]{e$_{n-1}$}) and \eqref{eq:defmun}, we have that
\[
\| F^{n-1}\|_{\infty}>\min \{\mu_{n-1}, \mu_n \} \text{ on } b M_n'=b M_{n-1}\cup(S_n \setminus\mathring S_n)
\]
We may assume by Lemma \ref{lemma:indf} that $F^{n-1}$ is of class $\Oscr_\infty(M_n'|E, \SL)$, and then Lemma \ref{lemma:proper}
gives a nonflat holomorphic null curve $F^n : M_n \setminus E \to \SL$ satisfying \ref{seq-}, \ref{seqa}, \ref{seqi}, \ref{seqp1}, and \ref{sep2}; take into account (\hyperref[seqi]{c$_{n-1}$}), \eqref{eq:defmun}, and the second part of (\hyperref[seq-]{\rm a$_{n-1}$}).
By the second part of \ref{seq-}, $F^n$ extends meromorphically to $M_n$ with an effective pole at each point of $E \cap M_n$. 
Further, by 
\ref{seqi} and the initial assumption that $F|_\Lambda$ is injective, we have that so is $F^n|_{\Lambda\cap M_n}$, and hence
 Lemma \ref{lemma:emb} enables us to assume that $F^n$ is an embedding. The inductive step is then completed by choosing $\varepsilon_n>0$ so small that  \ref{seqep} is satisfied, which exists by Cauchy estimates. This concludes the proof in the case that $F : S \setminus E \to \SL$ is a proper map.
  
In the case when $F : S \setminus E \to \SL$ is not proper but has an effective pole at every point in $E$, then we follow the same argument but ignoring the numbers $\mu_n$ and condition \ref{sep2}, and replacing \ref{seqp1} by  
\begin{enumerate}[label= \rm (\alph*$_n '$)]
\setcounter{enumi}{\value{mtc}}
\item \label{seqap} $\| F^n (p)\|_{\infty} > n$ for all $p \in b M_n$.
\end{enumerate}
In this case, the limit map $\wt F \colon M \setminus E \to \SL$ satisfies
that $| \wt F | > n-\varepsilon$ on $b M_n$ for every $n \in \n$, by \eqref{eq:eps} and \ref{seqap}. Hence, taking into account \eqref{eq:exh} and that $\wt F$ has an effective pole at every point in $E$, it is an almost proper map, showing \ref{mtap}.

Finally, if $F : S \setminus E \to \SL$ is neither proper nor has an effective pole at every point in $E$, the same argument but ignoring \ref{seqp1} and \ref{sep2} gives a nonflat holomorphic null curve $\wt F : M \setminus E \to \SL$ satisfying the conclusion of the theorem.
\end{proof}

We finish this section by recording a Carleman type theorem (see \cite{Carleman1927}) for holomorphic null curves in $\SL$; see Corollary \ref{cr:Carleman}. 
Let $S$ be a closed set in an open Riemann surface $M$. The \emph{holomorphic hull of $S$} is defined as the union $\wh S = \bigcup_{n \in \n} \wh S_n$, where $\{S_n\}_{n \in \n}$ is any exhaustion of $S$ by compact sets and
$\wh S_n$ denotes the holomorphic convex hull of $S_n$, that is,  the union of $S_n$ and its holes.
The set $S \subset M$ has \emph{bounded exhaustion hulls} if $\wh{K \cup S} \setminus K \cup S$ is relatively compact in $M$ for every compact set 
$K \subset M$.
The set $S \subset M$ is \emph{Carleman admissible} 
if $\wh S = S$, $S$ has bounded exhaustion hulls, and $S = K \cup \Gamma$ where $K$ is the union of a locally finite pairwise disjoint collection of smoothly bounded compact domains and $\Gamma = \overline{S \setminus K}$ is the union of a locally finite pairwise disjoint collection of smooth Jordan arcs, so that each component of $\Gamma$ intersects $bK$ only at its endpoints (if at all) and all such intersections are transverse (see \cite[Def.\ 2.1]{Castro-InfantesChenoweth2020},  \cite[\textsection 3]{FFW18}, or \cite[\textsection3.8]{AlarconForstnericLopez2021Book}). 
Note that a compact set $S \subset M$ is Carleman admissible if and only if it is a Runge admissible set in the sense of Definition \ref{def:admissible}. 
The notion of \emph{generalized null curve $S\to\SL$ of class} $\Ascr_{\infty}^r (S|E)$ $(r \in \N)$ on a Carleman admissible set $S$ extends naturally that of generalized null curve of class $\Ascr_{\infty}^r (S|E)$ on admissible sets in Definition \ref{def:gmncsl}, just replacing admissible by Carleman admissible.
%
%
\begin{corollary}\label{cr:Carleman} 
Let  $M$ be an open Riemann surface, $\theta$ a nowhere vanishing holomorphic $1$-form on $M$, $S = K \cup \Gamma$ a Carleman admissible subset, and $\Lambda$ and $E$ disjoint closed discrete subsets of $M$ with $\Lambda \cup E \subset  \mathring S$.
Also let $(F ,f \theta)$ be a generalized null curve $S \setminus E \to \SL$ of class $\Ascr^1_{\infty} (S|E)$. 
Then,  for any continuous function $\varepsilon \colon S \to (0, +\infty)$ and any map $k : \Lambda\cup E  \to \n$ there is a nonflat holomorphic null curve  $\wt F : M \setminus E \to \SL$ of class $\Oscr_\infty(M|E, \SL)$ such that:
\begin{itemize}
\item  $\wt F - F$ is continuous on $S$.
\item  $| \wt F (p)- F(p)| < \varepsilon(p)$ for all $p \in S$. 
\item $\wt F - F$ vanishes to order $k (p)$ at every point $p \in \Lambda \cup E $.
\item If $F|_{\Lambda}$ is injective, then we can choose $\wt F : M\setminus E  \to \SL$ to be injective.		
\item If $F$ has an effective pole at every point in $E$, then we can choose $\wt F$ to be complete. 
\item If $F : S\setminus E \to \SL$ is proper, then we can choose $\wt F : M \setminus E  \to \SL$ to be proper. 
\end{itemize}
\end{corollary}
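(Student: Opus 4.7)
The plan is to deduce this Carleman-type statement from Theorem \ref{th:mt} by the standard diagonal exhaustion procedure familiar from Carleman approximation for holomorphic functions and for holomorphic null curves in $\C^n$. I would first choose an exhaustion $M_1\Subset M_2\Subset\cdots\subset\bigcup_{n\in\N}M_n=M$ of $M$ by smoothly bounded Runge compact domains with $S\cap bM_n=\varnothing$ for each $n$; then $S_n:=S\cap M_n$ is a compact Runge admissible subset of $M$. The assumption that $S$ has bounded exhaustion hulls ensures that the compact set $T_n:=\widehat{M_n\cup S_{n+1}}$ is still Runge admissible, so Theorem \ref{th:mt} is applicable on $T_n$. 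Set $\sigma_n=\min_{S_n}\varepsilon>0$ and pick $\varepsilon_n>0$ with $\sum_{j\ge n}\varepsilon_j<\sigma_n/2$ for every $n$.

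Next, I would inductively build nonflat holomorphic null curves $F^n:M_n\setminus E\to\SL$ of class $\Oscr_\infty(M_n|E,\SL)$ satisfying
\begin{enumerate}[label=\rm(\alph*$_n$)]
\item $F^n-F$ is continuous on $S_n$, with $|F^n(p)-F(p)|<\sigma_n/2$ for every $p\in S_n$;
\item $F^n-F^{n-1}$ is holomorphic on $M_{n-1}$, with $\|F^n-F^{n-1}\|_{1,M_{n-1}}<\varepsilon_{n-1}$;
\item $F^n-F$ vanishes to order $k(p)$ at every $p\in(\Lambda\cup E)\cap M_n$.
\end{enumerate}
The inductive step is a direct application of Theorem \ref{th:mt} on the Runge admissible set $T_n$, using as input the generalized null curve that equals $F^n$ on $M_n$ and $F$ on $S_{n+1}\setminus\mathring M_n$, which is a compatible object because $S\cap bM_n=\varnothing$ and both pieces have the required regularity. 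The approximation bound is taken small enough to secure both (a$_{n+1}$) and (b$_{n+1}$), and the interpolation order is taken large enough to secure (c$_{n+1}$). The limit $\wt F=\lim_n F^n:M\setminus E\to\SL$ is then a nonflat holomorphic null curve of class $\Oscr_\infty(M|E,\SL)$, and for every $p\in S_n$ the triangle inequality together with (b$_j$) for $j>n$ yields
\[
|\wt F(p)-F(p)|\le|F^n(p)-F(p)|+\sum_{j\ge n}\varepsilon_j<\sigma_n\le\varepsilon(p),
\]
which is the Carleman estimate. The interpolation condition passes to the limit because every tail correction $F^{j+1}-F^j$ is holomorphic near $\Lambda\cup E$ and can be arranged to vanish there to arbitrarily high order.

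The additional properties of injectivity, completeness, and properness are imposed by invoking clauses \ref{mtiny}, \ref{mtap}, \ref{mtp} of Theorem \ref{th:mt} in the inductive step, and then shrinking $\varepsilon_n$ sufficiently so that Cauchy estimates on the derivative of $F^n$ prevent a $\Cscr^1$-small perturbation from destroying embeddedness on $M_n$, and so that the growth $\|F^n\|_\infty>n$ on $bM_n$ (or $>\mu_n\to\infty$ in the proper case) survives the subsequent perturbations. This is exactly the bookkeeping scheme already used in the proof of Theorem \ref{th:mt} above, only transplanted to the noncompact $S$.

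The main obstacle is the double bookkeeping balance: the Carleman tolerance is a continuous function $\varepsilon(p)$ rather than a single constant, and one must simultaneously protect the pointwise bound and the global conditions (embeddedness, growth at the boundary of $M_n$) under all later perturbations. The Carleman part is resolved by the choice $\sigma_n=\min_{S_n}\varepsilon$ and the summability $\sum\varepsilon_j<\sigma_n/2$; the embeddedness and growth parts are resolved by the same tolerance-shrinking that appears in the proof of Theorem \ref{th:mt}. The bounded exhaustion hulls hypothesis is essential, as it is precisely what makes every intermediate set $T_n$ Runge admissible and thus within the scope of the Mergelyan theorem. No new ideas beyond Theorem \ref{th:mt} are needed.
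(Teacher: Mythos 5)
Your overall strategy (exhaust $M$, apply Theorem \ref{th:mt} on compact admissible pieces, and run a diagonal bookkeeping with $\sigma_n=\min_{S\cap M_n}\varepsilon$ and summable tolerances) is indeed the route the paper has in mind, since it only cites the standard inductive scheme of \cite[Theorems 3.8.6 and 3.9.4]{AlarconForstnericLopez2021Book}, \cite[Theorem 5.2]{Castro-InfantesChenoweth2020}, and \cite[Theorem 2.4]{Svetina2024}. However, your execution has a genuine gap at the structural step on which everything else rests. You require an exhaustion with $S\cap bM_n=\varnothing$ for every $n$. This is impossible in the Carleman setting unless all connected components of $S$ are compact: a Carleman admissible set typically contains noncompact components (for instance a proper unbounded arc of $\Gamma$, or infinitely many compact domains strung together by arcs), and any such component meeting $\mathring M_n$ must cross $bM_n$. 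The case where all components of $S$ are compact is exactly the situation already covered by Theorem \ref{th:mt}; the whole point of Corollary \ref{cr:Carleman} is the noncompact-component case, where the best one can arrange is that $bM_n$ meets $\Gamma$ transversely in finitely many points and avoids $K$.

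This false assumption is then used to justify the key claim that the data ``$F^n$ on $M_n$ and $F$ on $S\cap(M_{n+1}\setminus\mathring M_n)$'' is a compatible generalized null curve on the compact admissible set of the inductive step. Once $bM_n$ actually meets arcs of $\Gamma$, this glued object is discontinuous at each crossing point, because $F^n\neq F$ there (they are only $\varepsilon$-close), so it is not of class $\Ascr^1_\infty$ in the sense of Definition \ref{def:gmncsl} and Theorem \ref{th:mt} cannot be applied to it. Bridging this mismatch is the essential content of the Carleman argument in the cited sources: on each arc of $\Gamma$ leaving $M_n$ one must replace $F$ near the crossing point by a small $\Cscr^1$ correction matching the $1$-jet of $F^n$ at $bM_n$ (with error controlled by $\|F^n-F\|_{1}$ near that point, so that the fine estimate $|\,\cdot\,-F|<\varepsilon$ survives), and only then invoke Theorem \ref{th:mt} (or Lemmas \ref{lemma:indf}--\ref{lemma:proper}) on the resulting compact admissible set. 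A secondary, more minor point: the hull $\widehat{M_n\cup S_{n+1}}$ is Runge but not automatically \emph{admissible}; one should instead choose the exhaustion, using $\widehat S=S$ and the bounded exhaustion hulls hypothesis, so that the sets $M_n\cup\bigl(S\cap M_{n+1}\bigr)$ are both admissible and $\Oscr(M)$-convex. With the bridging step and this choice of exhaustion supplied, the rest of your bookkeeping (interpolation, embeddedness via Cauchy estimates, and the growth conditions for properness/completeness) goes through as in the proof of Theorem \ref{th:mt}.
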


Note that the approximation in Corollary \ref{cr:Carleman} takes place in the fine Whitney topology on $S$.
The proof can be carried out from Theorem \ref{th:mt} by the same inductive argument as in the proof of \cite[Theorems 3.8.6 and 3.9.4]{AlarconForstnericLopez2021Book}, see also those of \cite[Theorem 5.2]{Castro-InfantesChenoweth2020} and \cite[Theorem 2.4]{Svetina2024}, and we omit the details.

%
%
\section{A step towards the proof of Lemma \ref{lemma:indf}}\label{sec:lindf}

\noindent A {\em holomorphic null curve} in $\C^3$ is a holomorphic immersion $M \to \C^3$, where $M$ is an open Riemann surface, whose derivative with respect to any local holomorphic coordinate on $M$ takes its values in the {\em punctured null quadric}
\begin{equation}\label{eq:nullquadric}
	\mathbf{A}_*  = \{ (z_1, z_2, z_3) \in \C^3 \colon z_1^2 + z_2^2 + z_3^2  = 0\} \setminus \{0\}.
\end{equation}
We will say that a holomorphic null curve which is also an embedding is a {\em holomorphic null embedding}.
Given a subset $W \subset M$,
we will say that a map $W \to \C^3$ is a {\it holomorphic null curve} if it extends to a holomorphic null curve on some open neighborhood of $W$ in $M$.
We also have the following analogue of Definition \ref{def:gmncsl}; cf. \cite[Definition 2.6]{AlarconLopez2019} and \cite[Remark 2.4]{AlarconVrhovnik2024X}.
%
%
\begin{definition}\label{def:gmnc}
Let $S = K \cup \Gamma$ be an admissible set (Definition \ref{def:admissible}) in an open Riemann surface $M$, $\theta$ a nowhere vanishing holomorphic $1$-form on a neighborhood of $S$, and  $E\subset \mathring{K}=\mathring{S}$ a finite subset.  A \emph{generalized null curve $S\setminus E \to \C^3$ of class $\Ascr^r_{\infty} (S| E)$ $(r \in \n)$} is a pair $(X,f \theta$) with $X \in \Ascr^r_{\infty} (S | E,  \C^3)$  and $f \in \Ascr^{r-1}_{\infty} ( S|E, \mathbf{A}_*)$ (see \eqref{eq:nullquadric}) satisfying the following.
\begin{itemize}
\item $f \theta =  d X$ holds on $K \setminus E$ (so $X : \mathring{K} \setminus E \to \C^3$ is a holomorphic null curve). 
\item For a smooth path $\alpha$ in $M$ parameterizing a connected component of $\Gamma$ we
have $ \alpha^* (f \theta) = \alpha^* (d X)= d ( X \circ \alpha)$.
\item $f^{-1} (\infty)  \subset E$ (see \eqref{eq:f-1}).
\end{itemize}
\end{definition}

As pointed out by Mart\'in, Umehara, and Yamada in \cite[Sec.\ 3.1]{MartinUmeharaYamada2009CVPDE}, the biholomorphism $\Tcal : \C^2 \times \C^* \to \SL \setminus \{ z_{11} = 0\}$ given in \eqref{eq:t} carries holomorphic null curves into holomorphic null curves. So does its inverse map $\Tcal^{-1}: \SL \setminus \{ z_{11} = 0\}\to \C^2 \times \C^*$,
\begin{equation}\label{eq:t-1}
\Tcal^{-1}  \left( \begin{array}{cc}
z_{11} & z_{12}\\
z_{21}& z_{22}
\end{array} \right) = \frac1{2z_{11}} \big( z_{21}+z_{12} \,,\, \igot(z_{21}-z_{12}) \, ,\, 2 \big).
\end{equation}
Since $\Tcal$ is a rational map it also takes meromorphic null curves into meromorphic null curves, but the polar sets need not be preserved. 
More precisely, given an admissible set $S$ in an open Riemann surface, a finite set $E \subset \mathring S$, and a generalized null curve $(X =(X_1, X_2, X_3), h \theta)$ from $S \setminus E$ into  $\C^3$ of class $\Ascr_{\infty}^r (S|E)$, with $X_3^{-1} (0) \subset \mathring S$, it turns out that 
\[
(\Tcal \circ X,  d \Tcal_X  (h \theta)) 
\]
is a generalized null curve $S \setminus (E \cup X_3^{-1} (0)) \to \SL$ of class $\Ascr_{\infty}^r (S|E \cup X_3^{-1} (0))$. 
Note that every point in $X_3^{-1} (0)$ is a pole of $\Tcal \circ X$, but there might be poles of $X$ in $E$ which are not poles of $\Tcal \circ X$.  
Conversely, if
$(F = (F_1, F_2, F_3, F_4), f \theta)$ is a generalized null curve $S \setminus E \to \SL$ of class $\Ascr_{\infty}^r (S| E)$, with $F_1^{-1} (0) \subset \mathring S$, then 
\[
(\Tcal^{-1} \circ F,  d \Tcal^{-1}_F ( f \theta))
\]
is a generalized null curve $S \setminus (E \cup F_1^{-1} (0)) \to \C^3$ of class $\Ascr_{\infty}^r (S|E\cup F_1^{-1} (0))$. 

The following result is the main technical tool in the proof of Lemma \ref{lemma:indf}. 
%
%
\begin{lemma} \label{lemma:ind}
Let $M$ be an open Riemann surface, $\theta$ a nowhere vanishing holomorphic $1$-form on $M$, $S \subset M$ a Runge admissible subset, 
$E$ and $\Lambda$ disjoint finite sets in $\mathring S$,
 $L \subset M$ a smoothly bounded compact domain with $S\subset \mathring L$, and $(X = (X_1,X_2,X_3), f \theta)$ a generalized null curve $S \setminus E \to \C^3$ of class $\Ascr^r_{\infty}  (S | E)$ $(r\in \n)$ such that $X_3^{-1} (0) \subset \mathring S$.
Given $\varepsilon>0$ and $k\in\n$, there is a nonflat holomorphic null curve $\wt X=(\wt X_1,\wt X_2,\wt X_3) :  L\setminus E \to \C^3$ of class $\Oscr_\infty(L|E, \C^3)$ satisfying the following.
\begin{enumerate}[label= \rm (\alph*)]
\item \label{conda} $\wt X - X$ is continuous on $S$, hence of class $\Ascr^r(S)^3$.
\item \label{conda1}  $\| \wt{X} - X \|_{r,S} < \varepsilon$. 
\item \label{condb} $\wt{X} - X$ vanishes to order $k$ at every point of $\Lambda \cup E$. 
\item \label{condc} $\wt X_3^{-1} (0) = X_3^{-1} (0)$.
\end{enumerate}
\end{lemma}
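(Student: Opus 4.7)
The plan is to invoke the standard Mergelyan-Mittag-Leffler-Weierstrass approximation-interpolation theory for holomorphic null curves in $\C^3$ (see \cite{AlarconForstnericLopez2021Book,AlarconLopez2019}) together with a prior enlargement of the admissible model data designed to control the zero locus of the third coordinate globally on $L$.

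Concretely, I would first list $X_3^{-1}(0) = \{p_1,\ldots,p_s\} \subset \mathring{S}$ with multiplicities $m_j := \ord_{p_j} X_3$, and enlarge the interpolation set to $\Lambda' := \Lambda \cup X_3^{-1}(0)$ with prescribed orders $k'(p) := k$ on $\Lambda \cup E$ and $k'(p_j) := \max\{k,\, m_j+1\}$ on $X_3^{-1}(0)$. Applying a Mergelyan-Mittag-Leffler-Weierstrass type theorem for generalized null curves in $\C^3$ to $(X,f\theta)$ with these data produces a nonflat holomorphic null curve $Y \in \Oscr_\infty(L|E,\C^3)$ satisfying \ref{conda}, \ref{condb} with the enhanced order $k'$, and \ref{conda1} with bound $\varepsilon/2$. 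The order-$(m_j+1)$ interpolation forces $\ord_{p_j} Y_3 = m_j$; combined with Rouché's theorem on small disks around each $p_j$ (on whose boundary $X_3$ is bounded away from $0$) and the uniform positivity of $|X_3|$ on the rest of $S$, this already gives the $S$-part of \ref{condc}: $Y_3^{-1}(0) \cap S = X_3^{-1}(0)$.

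The main obstacle will be ensuring that $Y_3$ has no zero on the compact region $L \setminus \mathring{S}$, where no datum from $X$ is available. I would address this by enlarging the admissible model before invoking the approximation theorem: replace $S$ by a Runge admissible superset $S^\sharp = S \cup \Gamma' \subset L$, where $\Gamma'$ is a suitable finite collection of smooth arcs cutting $L \setminus \mathring{S}$ into topological open disks whose boundaries lie in $S^\sharp$, and extend $(X,f\theta)$ to a generalized null curve $(X^\sharp, f^\sharp \theta)$ along $\Gamma'$ so that $X^\sharp_3$ is nowhere zero on $\Gamma'$ and has zero winding number around $0$ on the boundary of each such disk. The existence of such an extension, which uses the path-connectedness of $\mathbf{A}_* \setminus \{z_3 = 0\}$ together with the freedom to prescribe $X^\sharp$ on the arcs, is the core technical step. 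Applying the approximation theorem to $(X^\sharp, f^\sharp \theta)$ with the same interpolation data on $\Lambda' \cup E$ then yields a $Y$ whose third coordinate is close to $X^\sharp_3$ throughout $S^\sharp$, hence nowhere zero there and having zero winding number on the boundary of each disk component of $L \setminus S^\sharp$; the argument principle then excludes any zero of $Y_3$ inside these disks, establishing \ref{condc}.
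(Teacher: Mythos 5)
The first half of your plan is sound and essentially coincides with the paper's reduction step: adding $X_3^{-1}(0)$ to the interpolation set with orders exceeding $\ord_{p_j}X_3$ and invoking Rouch\'e/Hurwitz does give $\wt X_3^{-1}(0)\cap S=X_3^{-1}(0)$. The gap is exactly in the step you flag as the core one, namely excluding zeros of $\wt X_3$ on $L\setminus\mathring S$, and I do not see how your scheme can be carried out. For the argument principle to apply you need every potentially bad region to be bounded by curves contained in $S^\sharp$, where the approximation holds; but any such region is then a relatively compact connected component of $M\setminus S^\sharp$ (it is contained in $L$), so $S^\sharp$ is not Runge ($\Oscr(M)$-convex), and the Mergelyan--Mittag-Leffler--Weierstrass theorems for null curves you want to quote all require a Runge admissible set. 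This is not a technicality: on a set with a hole the data along the hole's boundary is not entirely at your disposal, since it contains the given $X$ along portions of $bS$, which need not be (approximable by) boundary values of a null curve holomorphic across the hole, so no choice of the extension along finitely many arcs can remove the obstruction. If instead you keep $S^\sharp$ Runge, then every component of $L\setminus S^\sharp$ reaches $bL$ (note also that Definition \ref{def:admissible} does not allow you to attach arcs to closed curves of $\Gamma'$, so you cannot close the picture off along $bL$ inside one admissible set without creating holes anyway, e.g.\ by adding a collar domain around $bL$), and then the boundary of such a component contains a piece of $bL$ along which you have no estimate on $\wt X_3$, so its winding number is not controlled by that of $X^\sharp_3$ and the argument principle yields nothing. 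In short, ``surrounding'' the uncontrolled region is incompatible with the Runge hypothesis of the black box you invoke.

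For comparison, the paper controls the zero set by a different mechanism that avoids this dichotomy: it constructs the third component directly on all of $L$ with the prescribed divisor, writing $X_3=\sigma\tau$ where $\tau\in\Oscr_\infty(L)$ carries the divisor of $X_3$ (Weierstrass--Florack) and $\sigma$ is a Runge--Oka approximation on $L$ of the nowhere vanishing quotient $X_3/\tau\colon S\to\C^*$, so $\wh X_3=\sigma\tau$ has no zeros in $L\setminus S$ by construction; it then rebuilds the first two components from $\eta=f_1+\igot f_2$ and $f_3^2/\eta$ with divisor control so as to stay in $\mathbf{A}_*$, and restores the vanishing of the periods with a period-dominating spray (precomposed with a meromorphic factor $g$ handling the poles at $E$). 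Property \ref{condc} then follows from Hurwitz because the final curve is close to $\wh X_3$ on all of $L$, not merely on $S$. To repair your proposal you would need to replace the winding-number step by some device of this kind, i.e.\ one that produces a competitor for $\wt X_3$ defined, with the correct divisor, on the whole of $L$.
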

Note that the given curve $X$ need not have effective poles at the points in $E$, that is, it could extend holomorphically to some of them (see Remark \ref{rk:E}). Lemma \ref{lemma:ind} generalizes \cite[Lemma 2.3]{AlarconCastro-InfantesHidalgo2023}, which deals with the case $E = \varnothing$.
It also generalizes \cite[Theorem 3.1]{AlarconLopez2019} by adding the control on the zeros of the third component function. We defer the proof of  Lemma \ref{lemma:ind} to Section \ref{sec:c3}.

%
%
\begin{proof}[Proof of Lemma \ref{lemma:indf} assuming Lemma \ref{lemma:ind}]
Let us write $F = (F_1, F_2, F_3, F_4)$ and $f = (f_1, f_2, f_3, f_4)$.
We begin with the following. 

%
%
\begin{claim}\label{cl:sc}
Lemma \ref{lemma:indf} holds true under the extra assumption that $F_1^{-1}(0) \subset \mathring K$. Furthermore, in this case there is a holomorphic null curve $\wt F = (\wt F_1, \wt F_2, \wt F_3, \wt F_4)  : L \setminus E \to \SL$ satisfying the conclusion of the lemma such that $\wt F_1^{-1}(0) = F_1^{-1} (0)$. 
\end{claim}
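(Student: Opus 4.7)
The plan is to conjugate the problem to $\C^3$ via the birational map $\Tcal$ of \eqref{eq:t}, apply Lemma \ref{lemma:ind} there, and transfer back via $\Tcal$. Set $E' := E \cup F_1^{-1}(0)$; by the hypothesis $F_1^{-1}(0) \subset \mathring K \subset \mathring S$, we have $E' \subset \mathring S$. I would consider $X := \Tcal^{-1} \circ F$ together with the natural $\C^3$-valued $1$-form $d\Tcal^{-1}_F(f\theta)$; by the discussion preceding Lemma \ref{lemma:ind}, this is a generalized null curve $S \setminus E' \to \C^3$ of class $\Ascr^r_\infty(S|E')$. Its third component $X_3 = 1/F_1$ has zero set exactly $F_1^{-1}(\infty) \subset E \subset \mathring S$, satisfying the zero-set hypothesis of Lemma \ref{lemma:ind}. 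Set $\Lambda_0 := \Lambda \setminus F_1^{-1}(0)$, which is disjoint from $E'$; the points of $\Lambda \cap F_1^{-1}(0)$ will be interpolated by virtue of their membership in $E'$.

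Fix an integer $k' \geq k$ and a small $\varepsilon' \in (0,\varepsilon)$ to be chosen below, and apply Lemma \ref{lemma:ind} with the data $(S, E', \Lambda_0, L, X, \varepsilon', k')$. This yields a nonflat holomorphic null curve $\wt X : L \setminus E' \to \C^3$ of class $\Oscr_\infty(L|E', \C^3)$ with $\wt X - X$ of class $\Ascr^r(S)^3$, $\|\wt X - X\|_{r, S} < \varepsilon'$, $\wt X - X$ vanishing to order $k'$ at every point of $\Lambda_0 \cup E'$ (and hence at every point of $\Lambda \cup E$), and crucially $\wt X_3^{-1}(0) = X_3^{-1}(0)$.

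Define $\wt F := \Tcal \circ \wt X$. The substance of the argument is to show that $\wt F$ extends holomorphically across $F_1^{-1}(0) \subset E' \setminus E$, so that $\wt F \in \Oscr_\infty(L|E, \SL)$. At each $p \in F_1^{-1}(0)$ choose a local coordinate $z$ vanishing at $p$ and set $m := \ord_p F_1 \geq 1$; write $X_j = z^{-m}\phi_j$ with $\phi_j$ holomorphic at $p$ and $\phi_3(p)\neq 0$. The relation $\det F = 1$ near $p$ forces $\phi_1^2 + \phi_2^2 + \phi_3^2$ to vanish to order at least $m$ at $p$, which is what will make the expression for $\wt F_4$ holomorphic. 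Because $\wt X_j - X_j$ is holomorphic at $p$ and vanishes to order $k'$, we have $\wt X_j = z^{-m}\psi_j$ with $\psi_j := \phi_j + z^m(\wt X_j - X_j) = \phi_j + O(z^{m + k'})$, so $\psi_3(p) = \phi_3(p) \neq 0$. A direct substitution into the formula \eqref{eq:t} then gives $\wt F_1 = z^m/\psi_3$, $\wt F_{2,3} = (\psi_1 \pm \igot\psi_2)/\psi_3$, and $\wt F_4 = (\psi_1^2 + \psi_2^2 + \psi_3^2)/(z^m \psi_3)$, all of which are holomorphic at $p$, with $\wt F_j - F_j$ vanishing to order at least $m + k' \geq k$ by the same cancellation. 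Moreover $\wt F_1$ has a zero of order exactly $m$ at $p$, matching $F_1$. Combining this with the identity $\wt X_3^{-1}(0) = X_3^{-1}(0) = F_1^{-1}(\infty) \cap E$, which ensures that $\wt F_1$ has poles exactly at the poles of $F_1$ in $E$, we conclude that $\wt F \in \Oscr_\infty(L|E, \SL)$ and $\wt F_1^{-1}(0) = F_1^{-1}(0)$.

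The main obstacle is the local Laurent cancellation at points of $F_1^{-1}(0)$: one must verify simultaneously that the artificial poles of $\wt X$ are removed exactly by the rational map $\Tcal$, that the resulting holomorphic germ of $\wt F$ agrees with that of $F$ to order $k$, and that the zero multiplicity of $\wt F_1$ at each point of $F_1^{-1}(0)$ is preserved. Once this is in hand, conclusions \ref{f-}, \ref{fap}, \ref{fint} of Lemma \ref{lemma:indf} follow routinely: the compact set $X(S) \subset \C^2 \times \C^*$ stays uniformly bounded away from the indeterminacy locus $\{z_3 = 0\}$ of $\Tcal$, so $\Tcal$ is Lipschitz in the $\Cscr^r$-norm on a neighborhood of $X(S)$, which lets us fix $\varepsilon'$ small enough to obtain $\|\wt F - F\|_{r, S} < \varepsilon$; the interpolation of $\wt F - F$ to order $k$ at $\Lambda \cup E$ is inherited from that of $\wt X - X$ at $\Lambda_0 \cup E'$ via the computations above; and nonflatness of $\wt F$ in $\C^4$ follows from nonflatness of $\wt X$ in $\C^3$, after an arbitrarily small generic perturbation at the end if necessary, using that the birational map $\Tcal$ does not carry affine lines to affine lines.
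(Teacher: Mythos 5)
Your strategy is the same as the paper's: enlarge the exceptional set to $E'=E\cup F_1^{-1}(0)$, pass to $\C^3$ via $\Tcal^{-1}$, apply Lemma \ref{lemma:ind}, and return via $\Tcal$; your explicit Laurent computation at the points of $F_1^{-1}(0)$ is a correct, more detailed version of what the paper asserts there. There are, however, two places where the argument as written does not go through. First, the claim that ``the compact set $X(S)\subset\C^2\times\C^*$ stays uniformly bounded away from the indeterminacy locus $\{z_3=0\}$'' is false whenever $E\neq\varnothing$: $X=\Tcal^{-1}\circ F$ blows up at the points of $E'$, and $X_3=1/F_1\to 0$ at the poles of $F_1$ in $E$, so $X(S\setminus E')$ is neither compact nor bounded away from $\{z_3=0\}$. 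Consequently the continuity of $\wt F-F$ across the points of $E$, its order-$k$ vanishing there, and the $\Cscr^r$ estimate near $E'$ do not follow from a Lipschitz bound for $\Tcal$; they require a Laurent cancellation at the points of $E$ analogous to the one you carried out at $F_1^{-1}(0)$, and for this $k'\ge k$ is not enough: since $\Tcal$ is rational, differences such as $\wt F_1-F_1=(X_3-\wt X_3)/(X_3\wt X_3)$ lose as many orders of vanishing as the zero/pole orders of $X$ at $E$ dictate, so $k'$ must be taken large compared with those orders (this is exactly the paper's ``$k_0$ sufficiently large''), after which the estimate on small discs about $E'$ follows from the maximum principle and Cauchy estimates, and on the rest of $S$ from the Lipschitz argument you describe. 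Relatedly, you never check that the extended $\wt F$ is an immersion (hence a null curve) at the points of $F_1^{-1}(0)$: if $m=\ord_p F_1\ge 2$ then $d\wt F_1|_p=0$, and one needs the $1$-jets of $\wt F$ and $F$ to agree at $p$, which again requires $k'\ge 2$; your stated requirement $k'\ge k$ allows $k'=1$.

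Second, the nonflatness remark is backwards. What is needed is that flatness of $\wt F$ forces flatness of $\wt X$, and this holds because $\Tcal^{-1}$ (not $\Tcal$) maps affine lines of $\SL\setminus\{z_{11}=0\}$ into affine lines of $\C^3$: along a line with $z_{11}=a+t\alpha$, each component of $\Tcal^{-1}$ is an affine function of $1/z_{11}$ (or of $t$, if $\alpha=0$). Hence nonflatness of $\wt X$ transfers directly, no final ``generic perturbation'' is needed, and such a perturbation would in fact endanger the interpolation conditions and the equality $\wt F_1^{-1}(0)=F_1^{-1}(0)$ you just arranged. With these repairs -- take $k'$ large in terms of the pole and zero orders of $X$ at $E'$ rather than merely $k'\ge k$, and argue nonflatness as above -- your proof coincides with the paper's.
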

\begin{proof}
Assume that $F_1^{-1}(0) \subset \mathring K$. Set 
\[
E_0 := E\cup F_1^{-1} (0) \subset \mathring K = \mathring S.
\] 
Note that $E_0$ is finite and $E_0 \setminus E = F_1^{-1} (0)$.
By the discussion preceding Lemma \ref{lemma:ind}, the pair
$(X := \Tcal^{-1} \circ F,  \,  d \Tcal^{-1}_F (f \theta))$
defines a generalized null curve $S \setminus E_0 \to \C^3$ of class $\Ascr_{\infty}^r (S | E_0)$; see \eqref{eq:t-1}. Write $X = (X_1, X_2, X_3)$ and note that $X_3 = 1/F_1$ vanishes nowhere on $S \setminus E_0$. 
Fix $\varepsilon_0 >0$ and $k _0 \in \n$ to be specified later. Lemma \ref{lemma:ind}  gives a nonflat holomorphic null curve $\wt X =(\wt X_1, \wt X_2, \wt X_3) :L \setminus E_0 \to \C^3$ such that:
\begin{enumerate}[label = \rm (\Roman*)]
\item \label{condalemma}$\wt X - X$ is continuous on $S$, hence of class $\Ascr^r(S)^3$.
\item \label{conda1lemma} $\| \wt X -X\|_{r, S} < \varepsilon_0$.
\item \label{condblemma} $\wt X - X$ vanishes to order $k_0$ at every point of $\Lambda \cup E_0$. 
\item \label{condclemma} $\wt X_3^{-1} (0) = X_3^{-1} (0) = \varnothing$.
\newcounter{l23}\setcounter{l23}{\value{enumi}}
\end{enumerate}
We claim that
$\wt F:=\Tcal \circ \wt X \colon L  \setminus E_0 \to \SL \setminus \{ z_{11} = 0\}$
(see \eqref{eq:t}) extends to a holomorphic null curve
\[
\wt F =  (\wt F_1, \wt F_2, \wt F_3, \wt F_4) :  L \setminus E \to \SL
\]
that satisfies the conclusion of the lemma.
Indeed, \ref{condclemma} shows that $\wt X$ has range in $\C^2 \times \C^*$, hence $\wt F$ is a holomorphic null curve on $L \setminus E_0$. In view of \ref{condalemma} and \ref{condblemma}, if $k_0$ is sufficiently large then $\wt F - F$ extends continuously to $S$ and vanishes to order $k$ on $\Lambda \cup E_0$. Since $F$ is continuous and an immersion on $E_0 \setminus E$, this shows that $\wt F$ is a holomorphic null curve on $L \setminus E$ as claimed; recall that $k\ge 1$. Taking also into account \ref{conda1lemma}, this discussion shows conditions \ref{f-}, \ref{fap}, and \ref{fint} in Lemma \ref{lemma:indf} whenever $\varepsilon_0>0$ is sufficiently small. 
Nonflatness of $\wt F$ follows from that of $\wt X$. 

For the second assertion in the statement of the claim, recall that $\wt F_1=1/\wt X_3$ on $L\setminus E_0$ by \eqref{eq:t}. Since $\wt X_3$ has poles only in $E_0$, $\wt F_1$ vanishes nowhere outside $E_0$, so $\wt F^{-1}_1 (0)= \wt F^{-1}_1(0) \cap (E_0\setminus E)$. Since \ref{condblemma} implies that $\wt F=F$ on $E_0\setminus E=F_1^{-1}(0)$,
we infer that $\wt F_1^{-1}(0) = F_1^{-1} (0)$.
\end{proof}

Since $F$ has the range in $\SL$, the map $(F_1, F_3) \colon S = K \cup \Gamma \to \C^2$ vanishes nowhere. Then the differential of the function
\begin{equation}\label{eq:trans}
\begin{split}
(bK \cup \Gamma) \times \C^2 & \to \C \\
(p,(a,b))& \mapsto  a F_1 (p) + b F_3 (p).\\
\end{split}
\end{equation}
with respect to $(a,b)$ has maximal rank for every $(p, (a, b)) \in (b K \cup \Gamma)\times \C^2$, hence Abraham's transversality theorem \cite{Abraham1963} (see also \cite[Theorem 1.4.3]{AlarconForstnericLopez2021Book}) shows that this map is transverse to $\{0\} \subset \C$ for almost every pair $(a,b) \in \C^2$. Since $b K$ and $\Gamma$ are smooth manifolds of real dimension $1$, there exist constants $a, b \in \C^*$ such that $a F_1 + b F_3 \not = 0$ on $b K \cup \Gamma$, so $F_1 + \lambda F_3 \not = 0$ on $b K \cup \Gamma$, where $\lambda = b/a \in \C^*$. 
Thus, the generalized null curve $(F^{\lambda}=(F^{\lambda}_1,F^{\lambda}_2,F^{\lambda}_3,F^{\lambda}_4), f^{\lambda}\theta)$ defined as
\[
F^{\lambda}=	\left(\begin{matrix}
	F_1 + \lambda F_3 & F_2 + \lambda F_4 \\
	F_3  & F_4
\end{matrix}\right) \quad\text{and}\quad
f^\lambda=\left(\begin{matrix}
	f_1 + \lambda f_2 & f_2 + \lambda f_4\\
	f_3 & f_4
\end{matrix}\right)
\]
satisfies $(F_1^{\lambda})^{-1}(0)\subset\mathring K$; see Remark \ref{rk:flambda}. 
By Claim \ref{cl:sc}, for any $\varepsilon^\lambda>0$ there is a nonflat holomorphic null curve $\wt F^{\lambda} = (\wt F^{\lambda}_1, \wt F^{\lambda}_2, \wt F^{\lambda}_3, \wt F^{\lambda}_4) : L \setminus E \to \SL$ such that $\wt F^\lambda  - F^\lambda$ is continuous on $S$, $\| \wt F^\lambda  - F^\lambda \|_{r, S} < \varepsilon^\lambda$, and $\wt F^\lambda - F^\lambda$ vanishes to order $k$ at every point of $\Lambda \cup E$. 
If $\varepsilon^{\lambda} >0$ is sufficiently small, then it is easily seen that the holomorphic null curve
\[
\wt F = 
\left(\begin{matrix}
	\wt F^{\lambda}_1  - \lambda \wt F^{\lambda}_3 & \wt F^{\lambda}_2 - \lambda \wt F^{\lambda}_4 \\
	\wt F^{\lambda}_3  & \wt F^{\lambda}_4
\end{matrix}\right) : L \setminus E \to \SL
\]
satisfies the conclusion of Lemma \ref{lemma:indf}.
\end{proof}

%
%
\section{Proof of Lemma \ref{lemma:emb}}\label{sec:lemma:emb}

\noindent
Given a map $h : A \to B$ between sets, we denote by
\begin{equation}\label{eq:Dh}
D_h = \{ p \in A \colon h^{-1} (h(p)) \setminus \{ p\} \not =\varnothing\}
\end{equation}
the set of double points of $h$.
It is clear that $h$ is injective if and only if $D_h = \varnothing$. 

Assume that $F|_\Lambda$ is injective and write $F= (F_1, F_2, F_3, F_4) : K \setminus E \to \SL$. Let us first prove the following approach to the lemma. 
%
%
\begin{claim}\label{cl:l23}
In the assumptions of Lemma \ref{lemma:emb}, if $F_1^{-1} (0) \subset \mathring K$ then there is a nonflat holomorphic null curve $\wt F:K\setminus E\to \SL$ satisfying $D_{\wt F} =  F_1^{-1} (0) \cap D_F$ (see \eqref{eq:Dh}) and conditions  \ref{f-em}, \ref{fapem}, and \ref{fintem} in the statement of the lemma.
\end{claim}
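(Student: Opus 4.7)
The plan is to reduce the claim to an embedding theorem for holomorphic null curves in $\C^3$ via the biholomorphism $\Tcal\colon \C^2\times\C^*\to\SL\setminus\{z_{11}=0\}$ of \eqref{eq:t}, along the same lines as the proof of Lemma \ref{lemma:indf}. Setting $E_0 = E\cup F_1^{-1}(0)\subset\mathring K$, the composition $X = \Tcal^{-1}\circ F\colon K\setminus E_0\to \C^2\times\C^*$ is, by the discussion preceding Lemma \ref{lemma:ind}, a nonflat holomorphic null curve of class $\Oscr_\infty(K|E_0,\C^3)$ whose third coordinate $X_3 = 1/F_1$ vanishes nowhere on $K\setminus E_0$. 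Since $\Tcal$ is a biholomorphism on its domain, two distinct points $p,q\in K\setminus E_0$ satisfy $F(p)=F(q)$ if and only if $X(p)=X(q)$, so the double points of $F$ off $E_0$ correspond bijectively to those of $X$.

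The key step is to approximate $X$ by a nonflat holomorphic null \emph{embedding} $\wt X\colon K\setminus E_0\to \C^2\times\C^*$ of class $\Oscr_\infty(K|E_0,\C^3)$, with $\wt X - X$ continuous on $K$, of arbitrarily small $\Cscr^r(K)$-norm, vanishing to a large prescribed order $k_0\in\N$ at every point of $\Lambda\cup E_0$, and with $\wt X_3$ nowhere zero on $K\setminus E_0$. Lemma \ref{lemma:ind}, applied with $S=K$ and any smoothly bounded $L\supset K$, delivers an approximation satisfying all of these conditions except injectivity. Injectivity is then obtained by a standard general-position perturbation within the space of null curves in $\C^3$, as in \cite[\textsection 3.4]{AlarconForstnericLopez2021Book}: a small null perturbation vanishing to order $k_0$ on $\Lambda\cup E_0$ and holomorphic on $K$ is chosen so that the difference map $(p,q)\mapsto \wt X(p)-\wt X(q)$ on $K\times K$ off the diagonal avoids the origin. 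Sufficient smallness preserves nonflatness and the nonvanishing of $\wt X_3$.

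Finally, define $\wt F = \Tcal\circ\wt X$ on $K\setminus E_0$. Since $\wt X - X$ vanishes to order $k_0\geq 1$ at each point of $F_1^{-1}(0)$ and $F$ is a holomorphic null immersion there, the map $\Tcal\circ\wt X$ extends holomorphically across $F_1^{-1}(0)$ to a nonflat holomorphic null curve $\wt F\colon K\setminus E\to\SL$ that coincides with $F$ on $F_1^{-1}(0)$. For $k_0$ large enough and the $\C^3$-approximation fine enough, conditions \ref{f-em}, \ref{fapem}, and \ref{fintem} follow directly. For the double-point set, note that $\wt F_1 = 1/\wt X_3 \neq 0$ on $K\setminus E_0$, while $\wt F\equiv F$ on $F_1^{-1}(0)$. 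Hence any pair $p\neq q$ in $K\setminus E$ with $\wt F(p)=\wt F(q)$ must have equal first coordinates: either both vanish, placing $p,q\in F_1^{-1}(0)$ and giving $F(p)=F(q)$, so the pair lies in $F_1^{-1}(0)\cap D_F$; or both are nonzero, forcing $p,q\in K\setminus E_0$ and $\wt X(p)=\wt X(q)$, which is excluded by injectivity of $\wt X$ there. Conversely, if $p\in F_1^{-1}(0)\cap D_F$, then any $q$ with $F(q)=F(p)$ satisfies $F_1(q)=0$, hence $q\in F_1^{-1}(0)$, and therefore $\wt F(q)=F(q)=F(p)=\wt F(p)$, placing $p$ in $D_{\wt F}$. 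This yields $D_{\wt F}=F_1^{-1}(0)\cap D_F$, as required.

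The main obstacle will be the embedding step in $\C^3$: one must upgrade the null approximation $\wt X$ provided by Lemma \ref{lemma:ind} to an embedding while simultaneously preserving the pole locus $E_0$, the prescribed jets on $\Lambda\cup E_0$, and the nonvanishing of the third coordinate. Fortunately each of these constraints is stable under the allowed perturbations — the pole and jet data are built into the approximation space, and the nonvanishing of $\wt X_3$ is preserved by continuity — so the embedding step ultimately reduces to the classical transversality argument for null curves in $\C^n$ with $n\geq 3$. Once it is carried out, the transfer back through $\Tcal$ and the double-point bookkeeping above are routine.
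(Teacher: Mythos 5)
Your reduction is essentially the one the paper uses for Claim \ref{cl:l23}: the same auxiliary set $E_0=E\cup F_1^{-1}(0)$, the same transfer through $\Tcal^{-1}$ to a null curve $X$ with $X_3=1/F_1$ zero-free on $K\setminus E_0$, and the same double-point bookkeeping at the end (the paper concludes exactly as you do, from injectivity of $\wt X$ on $K\setminus E_0$, $\wt F_1^{-1}(0)=F_1^{-1}(0)$, and $\wt F=F$ on $F_1^{-1}(0)$). The only divergence is how the embedded null curve $\wt X$ in $\C^3$ is produced. You propose Lemma \ref{lemma:ind} (to keep $\wt X_3$ zero-free) followed by a separate general-position perturbation for injectivity; the paper instead invokes in one shot the embedding theorem with jet interpolation for meromorphic null curves, \cite[Theorem 4.1]{AlarconVrhovnik2024X} (with a footnote explaining how to drop the effective-pole hypothesis at the points of $E_0$), and then recovers the nonvanishing of $\wt X_3$ a posteriori by Hurwitz's theorem: a zero of $\wt X_3$ would, by the closeness and the high-order agreement with $X_3$ at $\Lambda\cup E_0$, force $\wt X_3\equiv 0$, contradicting nonflatness. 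The step you flag as the main obstacle is therefore discharged in the paper by citation rather than by hand, and you should be aware that it is not quite the classical compact holomorphic general-position theorem of \cite{AlarconForstnericLopez2021Book}: one must perturb within null curves while keeping the difference holomorphic on $K$ (so the principal parts at $E_0$ are untouched), matching jets at $\Lambda\cup E_0$, and ruling out double points of $\wt X$ also on punctured neighborhoods of the points of $E_0$ (injectivity is needed on all of $K\setminus E_0$, including near points of $E$ where $\wt X$ may have a pole or a branch point), which is precisely the content of the meromorphic, jet-interpolating result the paper cites. So your outline is correct and parallel to the paper's; the ``standard'' transversality step it defers to is really that recent theorem, and your a posteriori preservation of $\wt X_3\neq 0$ near $E_0$ needs the same local Rouch\'e/Hurwitz argument the paper spells out rather than plain continuity.
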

%
%
\begin{proof}
Assume that $F_1^{-1} (0) \subset \mathring K$, set $E_0 := E \cup F_1^{-1} (0) \subset \mathring K$, and consider the holomorphic null curve $X = (X_1, X_2, X_3) :=  \Tcal^{-1}\circ F : K \setminus E_0 \to \C^3$ extending meromorphically to $K$ (see \eqref{eq:t-1}). 
As in the proof of Claim \ref{cl:sc}, the set $E_0$ is finite,
\begin{equation}\label{eq:EE0}
	E_0\setminus E=F_1^{-1}(0),
\end{equation} 
and $X_3 = 1/F_1$ vanishes nowhere on $K \setminus E_0$.
Fix $\varepsilon_0>0$ and $k_0 \in \N$ to be specified later. By \cite[Theorem 4.1]{AlarconVrhovnik2024X}\footnote{The assumptions in \cite[Theorem 4.1]{AlarconVrhovnik2024X} require $X$ to have effective poles at all points in $E_0$. However, a very minor modification of the proof shows that this condition is not necessary. Indeed, it suffices to choose the function $g$ in \cite[Eq.\ (3.1)]{AlarconVrhovnik2024X} to be meromorphic on $M$ (instead of holomorphic) and such that $g$ is holomorphic and nowhere vanishing on $M\setminus E$ and the product $\tilde{f_0} = f g$ in \cite[Eq.\ (3.2)]{AlarconVrhovnik2024X} is continuous and nowhere vanishing on $S$. (Cf.\ \eqref{eq:f0} in the present paper.) With this modification and using Hurwitz theorem, the same proof leads to get rid of the mentioned assumption in the statement of \cite[Theorem 4.1]{AlarconVrhovnik2024X}. Cf.\ Proof of Claim \ref{cl:lemma} in the present paper.} there is a nonflat holomorphic null embedding $\wt X = (\wt X_1, \wt X_2, \wt X_3) : K \setminus E_0 \to \C^3$ such that: 
\begin{enumerate}[label = \rm (\Roman*)]
\item \label{alemma}$\wt X - X$ is continuous on $K$, hence holomorphic.
\item \label{blemma} $\| \wt X -X\|_{r, K} < \varepsilon_0$.
\item \label{clemma} $\wt X - X$ vanishes to order $k_0$ at every point of $\Lambda \cup E_0$. 
\end{enumerate}
Further, assuming that $\varepsilon_0>0$ and $k_0\in\N$ are sufficiently small and large, respectively, we have that
\begin{equation}\label{0lemma} 
	\wt X_3^{-1} (0) = \varnothing. 
\end{equation}
Indeed, argue by contradiction and suppose that $\wt X_3^{-1} (0) \not = \varnothing$. 
Since $X_3$ vanishes nowhere on $K \setminus E_0$, conditions \ref{blemma} and \ref{clemma} and Hurwitz theorem (see \cite{Hurwitz1895} or \cite[VII.2.5, p.148]{Conway1973}) imply that $\wt X_3 \equiv 0$ on $K \setminus E_0$ whenever $\varepsilon_0>0$ and $k_0\in\N$ are suitable, which contradicts the nonflatness of $\wt X$ and shows \eqref{0lemma}. Then, arguing as in the proof of Claim \ref{cl:sc}, we infer that the injective map $\wt F  = \Tcal \circ \wt X  : K  \setminus E_0 \to \SL \setminus \{ z_{11} = 0\}$ (see \eqref{eq:t}) is well defined and extends to a holomorphic null curve
\[
	\wt F  =  (\wt F_1, \wt F_2, \wt F_3, \wt F_4) : K  \setminus E \to \SL
\] 
satisfying conditions \ref{f-em}, \ref{fapem}, and \ref{fintem} in the statement of  Lemma \ref{lemma:emb}.
Moreover, by \ref{alemma}, \ref{clemma}, and \eqref{eq:EE0}, we have that
\[
	\wt F_1^{-1}(0)=F_1^{-1}(0)=E_0\setminus E.
\]
This and the injectivity of $\wt F$ on $K \setminus E_0$ show that $D_{\wt F}\subset  F_1^{-1}(0)$. Finally, as $\wt F=F$ on $F_1^{-1} (0)$ (see \eqref{eq:EE0} and \ref{condblemma}) we obtain that $D_{\wt F}= F_1^{-1}(0)\cap D_F$.
\end{proof}      

Since $F_1$ is not constant by nonflatness of $F$, we may assume, up to passing to a slightly larger $K$ if necessary, that $F_1^{-1} (0) \subset \mathring K$. Choose numbers $\varepsilon>0$, $k\in\N$, and $r\in\Z_+$, and fix $\varepsilon_0>0$ to be specified later. We shall now complete the proof of the lemma by two succesive applications of Claim \ref{cl:l23}. 

\noindent
{\em First deformation.} By Claim \ref{cl:l23}, there is a nonflat holomorphic null curve $\wh F : K \setminus E \to \SL$ satisfying the following conditions.
\begin{enumerate}[label = \rm (\alph*)]
\item \label{24a} $\wh F  - F$ is continuous on $K$.
\item \label{24b}  $\| \wh F  - F \|_{r, K} < \varepsilon_0$.
\item \label{24c} $\wh F - F$ vanishes to order $k$ at every point of $\Lambda \cup E$. 
\item \label{24d} $D_{\wh F} =  F_1^{-1} (0) \cap D_F$; see \eqref{eq:Dh}.
\end{enumerate}

Condition \ref{24d} implies that the restriction of $\wh F$ to $(K\setminus E)\setminus (F_1^{-1} (0) \cap D_F)$ is injective, so its double points lie in $F_1^{-1} (0)$ and we still have to deal with them. 

\noindent
{\em Second deformation.} Since $F_1^{-1}(0)\subset\mathring K$ is finite,  $D_{\wh F}$ is finite as well in view of \ref{24d}. Thus, arguing as in the proof of Lemma \ref{lemma:indf} but replacing $b K \cup \Gamma$ by $b K \cup D_{\wh F}$ (see \eqref{eq:trans}), the transversality argument furnishes us with a constant $\lambda \in \C^*$ such that the nonflat holomorphic null curve
\begin{equation}\label{eq:defFhl}
\wh F^{\lambda} :K  \setminus E \to \SL, \quad \wh F^\lambda
= \left(\begin{matrix}
		\wh F_1^{\lambda}  & \wh F_2^{\lambda}  \\
		\wh F_3^{\lambda}  & \wh F_4^{\lambda}
\end{matrix}\right) 
= \left(\begin{matrix}
		\wh F_1  + \lambda \wh F_3 & \wh F_2 + \lambda \wh F_4 \\
		\wh F_3  & \wh F_4
\end{matrix}\right) 
\end{equation}
(see Remark \ref{rk:flambda}) satisfies
\begin{equation}\label{eq:nozeros}
	(\wh F^{\lambda}_1)^{-1}(0) \cap D_{\wh F} = \varnothing
	\quad\text{and}\quad (\wh F^{\lambda}_1)^{-1}(0) \subset \mathring K.
\end{equation}
Since $F|_\Lambda$ is injective, so is $\wh F|_\Lambda$ by \ref{24c}, and hence so is  $\wh F^{\lambda}|_{\Lambda}$ (see Remark \ref{rk:flambda}). This and the second condition in \eqref{eq:nozeros} enable us to apply Claim \ref{cl:l23} once again to find a nonflat holomorphic null curve
$\wt F^{\lambda} = (\wt F^{\lambda}_1, \wt F^{\lambda}_2, \wt F^{\lambda}_3, \wt F^{\lambda}_4) :K \setminus E \to \SL$ such that:
\begin{enumerate}[label = \rm (\alph*)]
	\setcounter{enumi}{\value{l23}}
	\item \label{h-}$  \wt F^{\lambda}- \wh F^{\lambda}$ is continuous on $K$.
	\item \label{hap}  $\|\wt F^{\lambda}    - \wh F^{\lambda} \|_{r, K} < \varepsilon_0$.
	\item \label{hint} $\wt F^{\lambda} - \wh F^{\lambda}$ vanishes to order $k$ at every point of $\Lambda \cup E$. 
	\item \label{hembd} $D_{\wt F^{\lambda}} = (\wh F^{\lambda}_1)^{-1}(0) \cap D_{\wh F^{\lambda}}$.
\end{enumerate}
Since $D_{\wh F^{\lambda}} = D_{\wh F}$ (see \eqref{eq:Dh} and Remark \ref{rk:flambda}), the first part of \eqref{eq:nozeros} and \ref{hembd} ensure that $D_{\wt F^{\lambda}}=\varnothing$, that is, $\wt F^\lambda:K\setminus E\to\SL$ is an embedding. Using conditions \ref{24a}--\ref{24c} and \ref{h-}--\ref{hint}, as well as the definition of $\wh F^\lambda$ in \eqref{eq:defFhl}, it is now easily seen that the nonflat holomorphic null curve
\[
	\wt F = 
	\left(\begin{matrix}
		\wt F^{\lambda}_1  - \lambda \wt F^{\lambda}_3 & \wt F^{\lambda}_2 - \lambda \wt F^{\lambda}_4 \\
		\wt F^{\lambda}_3  & \wt F^{\lambda}_4
	\end{matrix}\right) : K \setminus E \to \SL
\]
satisfies the conclusion of Lemma \ref{lemma:emb} provided that $\varepsilon_0>0$ is sufficiently small. Note that $\wt F$ is injective, hence an embedding, since so is $\wt F^{\lambda}$ (see Remark \ref{rk:flambda}). 

%
%

\section{Proof of Lemma \ref{lemma:proper} assuming  Lemma \ref{lemma:indf}}\label{sec:lproper}
\noindent 
The proof of Lemma \ref{lemma:proper} follows a standard argument developed by Alarc\'on and L\'opez in \cite{AlarconLopez2012JDG} for constructing proper minimal surfaces in Euclidean space $\r^3$; see, in particular, \cite[Lemma 5.1]{AlarconLopez2012JDG}. Their method has been adapted to obtain proper surfaces with arbitrary complex structure in several frameworks, such as minimal surfaces in $\r^n$ and holomorphic null curves in $\c^n$ with $n\ge 3$ (see \cite[Lemma 3.11.1]{AlarconForstnericLopez2021Book}), directed holomorphic and meromorphic immersions of open Riemann surfaces into $\C^n$ (see \cite[Lemma 8.2]{AlarconForstneric2014IM} and \cite[Theorem 1.1]{AlarconVrhovnik2024X}), holomorphic Legendrian curves in $\C^{2n+1}$ (see \cite[Lemma 5.2]{AlarconForstnericLopez2017CM} or \cite[Lemma 3.7]{Svetina2024}), and holomorphic Legendrian curves in $\SL$ (see \cite[Lemma 4.3]{Alarcon2017JGA}). 
In order to adapt this construction method to our current aim,
we need the following tools.
\begin{enumerate}[label= (\rm \Alph*)]
\item \label{A} A semiglobal Runge-Mergelyan approximation type theorem with jet interpolation for generalized meromorphic null curves on admissible sets into $\SL$ (see Definitions \ref{def:admissible} and \ref{def:gmncsl}). 
\item \label{B}A semiglobal Runge approximation type result with jet interpolation for meromorphic null curves in $\SL$ with a fixed component function. 
\item \label{C} Given an admissible set $S=K\cup\Gamma$ in an open Riemann surface and a finite set $E\subset\mathring K$, a way to extend a given meromorphic null curve $K\setminus E\to\SL$ of class $\Ascr^1_\infty(K|E, \SL)$ to a generalized meromorphic null curve $S\setminus E\to\SL$ of class $\Ascr^1_\infty(S|E)$ with suitable control on its values in $\Gamma$.
\item \label{D} Given a compact set $K\subset M$, a holomorphic null curve $F=(F_1,F_2,F_3,F_4):K\to\SL$, and an index $i\in\{1,2,3,4\}$, a way to define a holomorphic null curve $\wh F=(\wh F_1,\wh F_2,\wh F_3,\wh F_4):K\to\SL$ with $\wh F_i=F_i$ and $|\wh F|$ as large as desired everywhere on $K$. 
\end{enumerate}
Tool \ref{A} is already provided by Lemma \ref{lemma:indf}. The following Lemma \ref{lemma:fix}, Lemma \ref{lemma:ext}, and Claim \ref{cl1} grant tools \ref{B}, \ref{C}, and \ref{D}, respectively. 

%
%
\begin{lemma}\label{lemma:fix}
Let $M$ be an open Riemann surface,  $K$ and $L$ smoothly bounded Runge compact domains in $M$ with $K\subset  \mathring{L}$,
$\Lambda$ and $E$ disjoint finite sets in $\mathring K$, and $i \in  \{ 1,2,3,4\}$, and assume that $F= (F_1, F_2, F_3, F_4)\colon K \setminus E \to \SL$ is a nonflat holomorphic null curve of class $\Oscr_\infty(K|E, \SL)$
such that the component function
$F_i$ extends to $L \setminus E$ as a holomorphic function with
\begin{equation}\label{eq:cerosint}
	F_i^{-1} (0) \subset \mathring K.
\end{equation}
Then, given $\varepsilon>0$,  $k\in\n$, and $r \in \Z_+$, there is a nonflat holomorphic null curve $\wt F = (\wt F_1, \wt F_2, \wt F_3, \wt F_4) \colon L\setminus E \to \SL$ of class $\Oscr_\infty(L|E, \SL)$ such that:
\begin{enumerate}[label= \rm (\roman*)]
\item \label{fija1} $\wt F - F$ is continuous on $K$, hence holomorphic.
\item \label{fija2} $\| \wt{F} - F \|_{r,K} < \varepsilon$.
\item \label{fija3} $\wt{F} - F$ vanishes to order $k$ at every point of $\Lambda \cup E$. 
\item \label{fija4}$\wt F_i = F_i$ everywhere on $L\setminus E$. 
\end{enumerate}
\end{lemma}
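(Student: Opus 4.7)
The plan is to reduce the statement to the case $i=1$ and then transfer it, via the biholomorphism $\Tcal^{-1}$ of \eqref{eq:t-1}, to an approximation problem for generalized null curves in $\C^3$ with a \emph{prescribed} third component. This parallels the reduction of Lemma \ref{lemma:indf} to Lemma \ref{lemma:ind}, but with the additional constraint that one component function is held fixed throughout the deformation.

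For the reduction, observe that for every $i\in\{1,2,3,4\}$ there exist constant matrices $A,B\in\SL(2,\C)$ such that $(AFB)_{11}=F_i$: for $i=2$ take $A=I$, $B=\left(\begin{matrix}0 & -1\\ 1 & 0\end{matrix}\right)$; for $i=3$ take $A=\left(\begin{matrix}0 & 1\\ -1 & 0\end{matrix}\right)$, $B=I$; and for $i=4$ compose the two. The transformation $F\mapsto AFB$ preserves the class of null curves, since $(AFB)^{-1}d(AFB)=B^{-1}(F^{-1}dF)B$ has the same determinant as $F^{-1}dF$, and it transforms the approximation and interpolation conditions at $\Lambda\cup E$ into equivalent ones up to multiplicative constants. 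Hence we may assume $i=1$.

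Assuming $i=1$, so that $F_1$ extends holomorphically to $L\setminus E$ with $F_1^{-1}(0)\subset\mathring K$, put $E_0:=E\cup F_1^{-1}(0)\subset\mathring K$ and consider $X=(X_1,X_2,X_3):=\Tcal^{-1}\circ F\colon K\setminus E_0\to\C^2\times\C^*$, a generalized null curve (cf.\ Section \ref{sec:lindf}) whose third component $X_3=1/F_1$ extends to a meromorphic function on all of $L$ with pole set $E_0\setminus E=F_1^{-1}(0)$. By the Weierstrass representation for null curves in $\C^3$, the null relation $(dX_1)^2+(dX_2)^2+(dX_3)^2=0$ is equivalent to the existence of a meromorphic function $g$ on $K$ with
\[
dX_1=\tfrac12(g-1/g)\,dX_3,\qquad dX_2=-\tfrac{\igot}{2}(g+1/g)\,dX_3.
\]
Since $dX_3$ is already fixed on $L$, the condition $\wt X_3=X_3$ reduces the problem to finding a meromorphic function $\wt g$ on $L$ that $\Cscr^{r-1}$-approximates $g$ on $K$, matches its jet to a sufficiently high order $k'$ at every point of $\Lambda\cup E_0$, and makes the $\C^3$-valued $1$-form
\[
\Phi_{\wt g}:=\bigl(\tfrac12(\wt g-1/\wt g),\,-\tfrac{\igot}{2}(\wt g+1/\wt g),\,1\bigr)\,dX_3
\]
have vanishing periods around a basis of $H_1(L\setminus E_0;\Z)$. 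Integrating $\Phi_{\wt g}$ (with constants chosen to match $X$ at a reference point) then yields a nonflat null curve $\wt X=(\wt X_1,\wt X_2,X_3)\colon L\setminus E_0\to\C^2\times\C^*$ with $\wt X_3=X_3$, and $\wt F:=\Tcal\circ\wt X$ is a holomorphic null curve $L\setminus E_0\to\SL\setminus\{z_{11}=0\}$ with $\wt F_1=1/X_3=F_1$. The high-order jet matching at the points of $E_0\setminus E=F_1^{-1}(0)$, combined with the Hurwitz-type argument of Claim \ref{cl:l23}, ensures that $\wt F$ extends holomorphically across these points to a null curve $\wt F\colon L\setminus E\to\SL$ satisfying \ref{fija1}--\ref{fija4}. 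Undoing the initial reduction completes the proof.

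The main technical obstacle lies in the period-closing step. One first obtains an initial meromorphic approximation $\wt g_0$ of $g$ on $K$ with prescribed jets at $\Lambda\cup E_0$ via the Runge-Mergelyan theorem with jet interpolation \cite[Theorem 5]{FFW18}, and then perturbs $\wt g_0$ along a finite family of short arcs in $L\setminus K$ that meet a basis of cycles of $L\setminus E_0$ transversely, so as to cancel the period errors of $\Phi_{\wt g}$ while preserving both the approximation on $K$ and the jet conditions at $\Lambda\cup E_0$. Such period-killing techniques are by now standard in the study of directed holomorphic immersions of open Riemann surfaces (see, e.g., \cite[Sections 3.5-3.6]{AlarconForstnericLopez2021Book} and \cite[Theorem 4.1]{AlarconVrhovnik2024X}); the new feature is that $dX_3$ is meromorphic rather than holomorphic, which forces careful matching of the principal parts of $\wt g$ with those of $g$ at the points of $E_0\setminus E$ so that $d\wt X_1$ and $d\wt X_2$ do not acquire residues there.
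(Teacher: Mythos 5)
Your reduction to $i=1$ by left/right multiplication with constant matrices in $\SL$ is correct (and more explicit than the paper's ``without loss of generality''), and the transfer to $\C^3$ via $\Tcal^{-1}$, with $E_0=E\cup F_1^{-1}(0)$ and the fixed third component $X_3=1/F_1$, is exactly the paper's route. Where you diverge is in the key $\C^3$ ingredient: the paper simply invokes \cite[Theorem 6.1]{AlarconVrhovnik2024X} (approximation of null curves in $\C^3$ with jet interpolation and the third component held fixed, with a footnote removing the effective-pole hypothesis), whereas you try to prove this from scratch via the Weierstrass-type representation $dX_1=\tfrac12(\wt g-1/\wt g)\,dX_3$, $dX_2=-\tfrac{\igot}{2}(\wt g+1/\wt g)\,dX_3$, Runge--Mergelyan approximation of $g$, and period killing.

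As sketched, that step has a genuine gap: you never control the divisor of $\wt g$ against the zero divisor of the fixed form $dX_3=-dF_1/F_1^2$. The constraint is rigid. At every zero $q$ of $dX_3$ in $L\setminus E_0$ one needs $|\ord_q(\wt g)|$ to be at most the vanishing order of $dX_3$ at $q$ for $d\wt X_1,d\wt X_2$ to stay holomorphic, and equal to it for $\wt X$ to be an immersion at $q$ (otherwise all three components of the derivative vanish there); and any zero or pole of $\wt g$ at a point where $dX_3\neq 0$ produces a pole of $d\wt X_1$ or $d\wt X_2$, destroying holomorphy of $\wt X$ on $L\setminus E_0$. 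Your jet set is only $\Lambda\cup E_0$, so even inside $K$ the zeros of $dF_1$ are unprotected (a Runge approximant of $g$ may move or lower the order of its poles there), and on $L\setminus K$ --- where $dF_1$ may well vanish and there is no original $g$ to copy --- nothing in your construction forces $\wt g$ to acquire zeros or poles of exactly the right orders at those points and nowhere else. Consequently the integrated $\wt X$, hence $\wt F=\Tcal\circ\wt X$, can fail to be holomorphic or fail to be an immersion on $L\setminus E$. The standard repair is divisor-controlled approximation: factor out a global meromorphic function $\tau$ carrying the prescribed divisor (Weierstrass interpolation) and approximate the unit $g/\tau$ by maps into the Oka manifold $\C^*$ with jet interpolation at $\Lambda\cup E_0$ and at the zeros of $dX_3$ in $K$ --- precisely the device used in Step 2 of the paper's proof of Claim \ref{cl:lemma} --- or simply quote \cite[Theorem 6.1]{AlarconVrhovnik2024X} as the paper does. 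Your period-killing step and the Hurwitz-type extension of $\wt F$ across $F_1^{-1}(0)$ are fine once this divisor bookkeeping is added.
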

%
%
\begin{proof} 
We assume without loss of generality that $i = 1$. 
Fix a nowhere vanishing holomorphic $1$-form $\theta$ on $M$, write $f_1 = d F_1 / \theta$, and note that $f_1 \not \equiv 0$ by nonflatness of $F$.
Up to slightly enlarging $K$ if necessary, we then may also assume that 
\begin{equation}\label{eq:f1bk}
	f_1^{-1} (0) \cap b K = \varnothing.
\end{equation}
Consider the finite set $E_0 := E \cup F_1^{-1}(0) \subset \mathring K$ and the holomorphic null curve $X = (X_1, X_2, X_3):= \Tcal^{-1} \circ F : K \setminus E_0 \to \C^3$; see \eqref{eq:cerosint} and \eqref{eq:t-1}. Since $X_3 = 1/ F_1$ on $K \setminus E_0$ and $F_1$ extends holomorphically to $L \setminus E$ by hypothesis, it follows that $X_3$ extends holomorphically to $L \setminus E_0$. 
Fix $\varepsilon_0>0$ (small) and $k_0 \in \N$ (large) to be specified later. 
Denote $h_3 = d X_3 / \theta$ and observe that
$h_3= -f_1 /F_1^2$ vanishes nowhere on $bK$
by \eqref{eq:f1bk}. Thus
\cite[Theorem 6.1]{AlarconVrhovnik2024X}\footnote{The assumptions in \cite[Theorem 6.1]{AlarconVrhovnik2024X} require $X$ to have effective poles at all points in $E_0$ but, as in the aforementioned case of \cite[Theorem 4.1]{AlarconVrhovnik2024X}, this condition is actually not necessary.}  gives  a holomorphic null curve $\wt X = (\wt X_1, \wt X_2, \wt X_3): L \setminus E_0 \to \C^3$  such that: 
\begin{enumerate}[label= \rm (\Roman*)]
	\item \label{fix-} $\wt X - X$ is continuous on $K$, hence holomorphic.
	\item \label{fixap} $\| \wt X -X\|_{r, K} < \varepsilon_0$.
	\item \label{fixint} $\wt X - X$ vanishes to order $k_0$ at every point of $\Lambda \cup E_0$.
	\item \label{fixx3} $\wt X_3 = X_3$ everywhere on $L\setminus E_0$.
\end{enumerate}
Assuming that $\varepsilon_0>0$ and $k_0\in\N$ are suitably chosen and arguing as in the proof of Lemma \ref{lemma:indf}, the map $\Tcal \circ \wt X : L \setminus E_0 \to \SL$ extends to a nonflat holomorphic null curve 
$\wt F = (\wt F_1, \wt F_2, \wt F_3, \wt F_4) : L \setminus E \to \SL$
that satisfies \ref{fija1}, \ref{fija2}, and \ref{fija3} in view of \ref{fix-}, \ref{fixap}, and \ref{fixint}.
It also follows from \ref{fixx3} that $\wt F_1 = F_1$ on $L \setminus E_0$; take into account that $\wt F_1 = 1 / \wt X_3= 1/ X_3=F_1$ on $L \setminus E_0$ by \eqref{eq:t} and \eqref{eq:t-1}. By continuity, this implies that $\wt F_1 = F_1$ on $L \setminus E$, proving \ref{fija4}.
\end{proof}

%
%
\begin{lemma}\label{lemma:ext}
Let  $M$ be an open Riemann surface,  $\theta$ a nowhere vanishing holomorphic $1$-form on $M$, and $\Gamma$ a Jordan arc in $M$ with endpoints $p, q \in M$. Then, given $A, B\in \SL$, $A', B'\in \mathfrak{A}_*$ (see \eqref{eq:nullquadricsl2}), and an open connected set $\Omega \subset \SL$ with $A, B\in \Omega$, there exists a nonflat generalized null curve $(F, f \theta)$ from $\Gamma$ to  $\SL$ of class $\Ascr^1 (\Gamma)$ ($= \Ascr_{\infty}^1(\Gamma |\varnothing)$; see Definition \ref{def:gmncsl}) 
satisfying $F(\Gamma) \subset \Omega$, $F(p) = A$, $F(q) = B$, $f(p) = A'$, and $f(q) = B'$.
\end{lemma}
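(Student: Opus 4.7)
Parameterize $\Gamma$ by an injective $\Cscr^1$ map $\alpha\colon[0,1]\to M$ with $\alpha(0)=p$, $\alpha(1)=q$, and put $\phi(t):=\theta(\alpha'(t))\in\C^*$. A generalized null curve $(F,f\theta)$ along $\Gamma$ of class $\Ascr^1(\Gamma)$ is the same data as a $\Cscr^1$ curve $\beta:=F\circ\alpha\colon[0,1]\to\SL$ with $\det\beta'\equiv 0$ and $\beta'$ nowhere zero, via $f\circ\alpha=\beta'/\phi$. Note that the relation $f\theta=dF$ forces $A'\in T_A\SL$ and $B'\in T_B\SL$, which we read as implicitly built into the data. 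The task therefore reduces to constructing a $\Cscr^1$ path $\beta\colon[0,1]\to\Omega$ with $\beta(0)=A$, $\beta(1)=B$, $\beta'(0)=A'\phi(0)$, $\beta'(1)=B'\phi(1)$, and image not contained in any affine complex line of $\C^4$.

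Since $\Omega\subset\SL$ is open and connected, fix a smooth path $\sigma_0\colon[0,1]\to\Omega$ from $A$ to $B$, cover $\sigma_0([0,1])$ by finitely many open balls $V_1,\dots,V_N\subset\Omega$, and choose a partition $0=t_0<t_1<\cdots<t_N=1$, intermediate points $A=P_0,P_1,\dots,P_N=B$ with $P_{j-1},P_j\in V_j$, and auxiliary null tangent directions $P_j'\in\mathfrak{A}_*\cap T_{P_j}\SL$ at the interior nodes. Because $\{z_{11}=0\}\cap\SL$ is a complex hypersurface, by applying in each $V_j$ one of the null-preserving $\SL$-automorphisms of Remark \ref{rk:flambda} (possibly with different parameter for different $j$) we may arrange every $V_j\subset\SL\setminus\{z_{11}=0\}$, so that $\Tcal^{-1}$ of \eqref{eq:t-1} biholomorphically identifies $V_j$ with an open set $\wt V_j\subset\C^2\times\C^*$ and carries null curves to null curves.

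On each sub-arc $[t_{j-1},t_j]$ the problem now reduces to the following classical one in $\C^3$: given $a,b\in\wt V_j$ and $a',b'\in\mathbf{A}_*$ (the images of the corresponding positional and derivative data), construct a $\Cscr^1$ null curve $\gamma_j\colon[t_{j-1},t_j]\to\wt V_j$ with $\gamma_j(t_{j-1})=a$, $\gamma_j(t_j)=b$, $\gamma_j'(t_{j-1})=a'\phi(t_{j-1})$, $\gamma_j'(t_j)=b'\phi(t_j)$. Use the Weierstrass parametrization of null vectors in $\C^3$:
\[
\gamma_j'(t)=\bigl(\tfrac12(1-g(t)^2),\,\tfrac{\igot}{2}(1+g(t)^2),\,g(t)\bigr)h(t),
\]
with continuous $g\colon[t_{j-1},t_j]\to\C$ and $h\colon[t_{j-1},t_j]\to\C^*$ (passing to the complementary chart of $\mathbf{A}_*$ if the chosen boundary data require it). The boundary conditions determine $(g,h)$ at the two endpoints, while the positional equality $\int_{t_{j-1}}^{t_j}\gamma_j'(s)\,ds=b-a$ amounts to three complex scalar ``period'' conditions on the infinite-dimensional family of admissible $(g,h)$. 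A standard period-killing argument, analogous to those employed throughout \cite[Chapter 3]{AlarconForstnericLopez2021Book}, solves this system; and if the partition is fine enough, then the base Weierstrass curve has integral close to $b-a$, the necessary perturbation is small, and the resulting $\gamma_j$ remains inside $\wt V_j$.

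Set $\beta_j:=\Tcal\circ\gamma_j\colon[t_{j-1},t_j]\to V_j\subset\Omega$ and undo the automorphism of Remark \ref{rk:flambda} in each chart. Consecutive arcs share the value $P_j$ and the derivative $P_j'\phi(t_j)$ at the common node, so the concatenation $\beta:=\beta_1*\cdots*\beta_N$ is of class $\Cscr^1$, its derivative lies in $\mathfrak{A}_*$ everywhere on $[0,1]$, and it satisfies all prescribed boundary data. Nonflatness in $\C^4$ is an open condition on the space of null paths, so a small generic perturbation of the free part of one of the pairs $(g_j,h_j)$, respecting the boundary and period conditions, achieves it. Extending $\beta$ from $[0,1]$ to an open $\Cscr^1$ neighborhood of $\Gamma$ in $M$ (by transverse thickening), and then transferring back along $\alpha$, produces the desired $(F,f\theta)$. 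The chief technical hurdle is the period-killing in $\C^3$ under the simultaneous constraint $\gamma_j([t_{j-1},t_j])\subset\wt V_j$; this is handled by sufficiently fine subdivision, so that the Weierstrass correction needed to close the period is small enough to keep the curve inside the prescribed open set.
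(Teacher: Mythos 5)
Your strategy is essentially sound and rests on the same mechanism as the paper's proof, namely transporting the problem to null curves in $\C^3$ through the biholomorphism $\Tcal$ of \eqref{eq:t}, but the organization is genuinely different. The paper argues in one shot: after assuming without loss of generality that $A$, $B$, and $\Omega$ lie in $\SL\setminus\{z_{11}=0\}$ (which one can arrange by a shear as in Remark \ref{rk:flambda} together with shrinking $\Omega$ to a neighborhood of a path from $A$ to $B$ avoiding the complex hypersurface $\{z_{11}=0\}$), it applies the known arc lemma \cite[Lemma 3.3]{AlarconCastro-Infantes2019APDE} in $\C^3$ to the transported data and pushes the resulting generalized null curve back with $\Tcal$. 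Your chain of small balls, with a different shear on each piece, is a legitimate and arguably more explicit way of dealing with $\{z_{11}=0\}$; the price is that on each piece you must re-prove exactly the statement that \cite[Lemma 3.3]{AlarconCastro-Infantes2019APDE} already supplies (with its open set equal to your $\wt V_j$): prescribed endpoints, prescribed endpoint derivatives, and confinement to a given connected open set. If you invoked that lemma on each $\wt V_j$---or once globally, as the paper does---your remaining steps (choice of node data, $\Cscr^1$ concatenation, thickening to a neighborhood of $\Gamma$) would complete the proof.

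As written, the in-house period-killing step is the weak point. The assertion that for a fine enough partition ``the base Weierstrass curve has integral close to $b-a$'' is unjustified on the first and last subarcs: there the prescribed derivatives $A'\theta(\alpha'(0))$ and $B'\theta(\alpha'(1))$ are fixed data, independent of the mesh, and may be large compared with $|b-a|$ and with the diameter of $\wt V_j$, so confinement and smallness of the period correction require building the base data with $|h|$ decaying rapidly away from the endpoint (or arguing as in the cited lemma), not merely refining the partition. Also, ``nonflatness is an open condition'' gives stability of nonflatness, not the existence of a nearby nonflat solution; the clean fix is to choose two of the auxiliary node directions $P_j'$ non-proportional, which already forces the image off every affine complex line. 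Finally, you are right that the data must implicitly satisfy $A'\in T_A\SL$ and $B'\in T_B\SL$ (equivalently $\tr(A^{-1}A')=0=\tr(B^{-1}B')$), since $\det\circ F\equiv 1$ along $\Gamma$ forces $f$ to be tangent to $\SL$ at points of $\Gamma$; the paper's statement and proof make the same tacit assumption, so flagging it is appropriate and not a defect of your approach.
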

%
%
\begin{proof}
We may assume without loss of generality that $A$, $B$, and $\Omega$ are contained in $\SL \setminus \{z_{11} = 0\}$.
By \cite[Lemma 3.3]{AlarconCastro-Infantes2019APDE} applied to $\Tcal^{-1} (A), \Tcal ^{-1}(B)$, the open connected set $\Tcal^{-1} (\Omega)$, $d \Tcal_A^{-1} (A')$, and $d \Tcal_B^{-1}(B')$ (see \eqref{eq:t-1}), there is a nonflat generalized null curve $(X, h\theta)$ from $\Gamma$ to $\C^3$ of class $\Ascr^1 (\Gamma) = \Ascr_{\infty}^1 (\Gamma|\varnothing)$, such that
$X (\Gamma) \subset \Tcal^{-1} (\Omega)\subset\C^2\times\C^*$, $X (p)= \Tcal^{-1} (A)$, $X(q) = \Tcal ^{-1}(B)$, $h(p) = d \Tcal_A^{-1} (A')$, and $h(q)=  d \Tcal_B^{-1}(B')$.
Since $X$ has range in $\C^2 \times \C^*$ 
and is of class $\Ascr^1(\Gamma)$, the pair
$
(F,f\theta)=(\Tcal \circ X, d \Tcal_X (h \theta))
$
is a generalized null curve $\Gamma \to \SL \setminus \{z_{11} = 0\}$ of class $\Ascr^1 (\Gamma)$ according to the discussion preceding Lemma \ref{lemma:ind}.
It is then clear that $(F,f\theta)$ satisfies the required conditions in view of 
\eqref{eq:t}.
\end{proof}

%
%
\begin{claim}\label{cl1}
Let $M$ be an open Riemann surface, $K \subset M$ a compact set, and $F = (F_1, F_2, F_3, F_4) :  K \to \SL$ a holomorphic null curve. Given $i \in \{ 1,2,3,4\}$ and $\delta >0$, there is a holomorphic null curve $\wt F  = (\wt F_1, \wt F_2, \wt F_3, \wt F_4): K \to \SL$ such that $\wt F_i(p) = F_i(p)$ and $\max \big\{|\wt F_j (p) |\colon j \in\{1,2,3,4\}\setminus\{i\}\big\} > \delta$ for every $p \in K$.
\end{claim}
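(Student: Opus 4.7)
The plan is to construct $\wt F$ by applying one of the four null-curve transformations in Remark \ref{rk:flambda} for a sufficiently large parameter $\lambda \in \C^*$. Each of those transformations preserves two of the four component functions of $F$, so for every index $i \in \{1,2,3,4\}$ at least two of them fix $F_i$. I would match each $i$ with one such choice: for $i=1$ take $\wt F = (F_1,\,F_2,\,F_3+\lambda F_1,\,F_4+\lambda F_2)$; for $i=2$ take $\wt F = (F_1+\lambda F_2,\,F_2,\,F_3+\lambda F_4,\,F_4)$; for $i=3$ take $\wt F = (F_1,\,F_2+\lambda F_1,\,F_3,\,F_4+\lambda F_3)$; and for $i=4$ take $\wt F = (F_1+\lambda F_3,\,F_2+\lambda F_4,\,F_3,\,F_4)$. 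By Remark \ref{rk:flambda}, each such $\wt F \colon K \to \SL$ is a holomorphic null curve with $\wt F_i = F_i$, regardless of $\lambda$.

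The key point that makes the modulus grow is that, in each of the four cases above, the two non-preserved components of $\wt F$ have the form $F_j + \lambda F_a$ and $F_k + \lambda F_b$, where the pair $(F_a, F_b)$ is one of the rows or columns of the matrix $F$, namely $(F_1,F_2)$, $(F_3,F_4)$, $(F_1,F_3)$, or $(F_2,F_4)$. Since $F(p) \in \SL$, the identity $F_1(p) F_4(p) - F_2(p) F_3(p) = 1$ prevents any row or column of $F$ from vanishing simultaneously at any point of $K$; hence by compactness of $K$ there exists $c > 0$ with $|F_a(p)| + |F_b(p)| \geq c$ for all $p \in K$. Let $M = \max_{p \in K}\sum_{\ell=1}^{4}|F_\ell(p)|$. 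Applying the triangle inequality to the two shifted coordinates gives
\[
\max\{|\wt F_j(p)|,\,|\wt F_k(p)|\} \;\geq\; \frac{1}{2}\bigl(|\lambda|(|F_a(p)|+|F_b(p)|) - |F_j(p)| - |F_k(p)|\bigr) \;\geq\; \frac{|\lambda|\,c - M}{2}
\]
for every $p \in K$. Choosing $|\lambda|$ large enough that the right-hand side exceeds $\delta$ yields $\max\{|\wt F_\ell(p)| \colon \ell \neq i\} > \delta$ on $K$ and finishes the proof, since $\{j,k\} \subset \{1,2,3,4\}\setminus\{i\}$.

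There is no substantive obstacle here: the whole content of the claim is the recognition that the shears of Remark \ref{rk:flambda} act nontrivially on a row or column of $F$ which, thanks to the $\SL$ constraint, cannot degenerate on the compact set $K$, so pushing $|\lambda|$ to infinity gives the required growth of the three free components.
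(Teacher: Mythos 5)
Your proposal is correct and follows essentially the same route as the paper: the paper (taking $i=1$, the other cases being analogous) uses exactly the shear $(F_1,F_2,F_3+\lambda F_1,F_4+\lambda F_2)$ from Remark \ref{rk:flambda} together with the fact that $(F_1,F_2)$ vanishes nowhere on the compact set $K$ to choose $|\lambda|$ large. Your version merely spells out all four cases and the elementary compactness estimate explicitly (note the harmless clash of notation in reusing $M$ for $\max_{p\in K}\sum_\ell|F_\ell(p)|$).
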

%
%
\begin{proof}
We assume that $i=1$;  the proof is analogous otherwise. Since the map $(F_1, F_2)\colon K \to \C^2$ 
vanishes nowhere on the compact set $K$, there exists $\lambda \in \C^*$ with
\[
	\max \{ | F_3 +  \lambda F_1 | , \, |F_4 + \lambda F_2 | \} > \delta \quad \text{everywhere on } K. 
\]
Then, the holomorphic null curve
\[
	\wt F = 
	\left(\begin{matrix}
		\wt F_1  & \wt F_2 \\
		\wt F_3   & \wt F_4
	\end{matrix}\right) := 
	\left(\begin{matrix}
		F_1  & F_2 \\
		F_3 + \lambda F_1 & F_4+ \lambda F_2
	\end{matrix}\right) 
	: K \to \SL
\]
satisfies the conclusion of the claim; see Remark \ref{rk:flambda}.
\end{proof}
As pointed out, with these tools in hand the proof of Lemma \ref{lemma:proper} is a straightforward adaptation of those in the sources  mentioned at the beginning of this section, so we only provide the following.
%
%
\begin{proof}[Sketch of proof of Lemma \ref{lemma:proper} assuming  Lemma \ref{lemma:indf}]
We assume without loss of generality that $L$ is connected and, by a finite recursive application of Lemmas \ref{lemma:ext} and \ref{lemma:indf} depending on the topology of $L\setminus\mathring K$, that $K$ is a strong deformation retract of $L$; see, e.g., \cite[Proof of Theorem 3.10.3]{AlarconForstnericLopez2021Book}.  Also, for simplicity of exposition, we assume that $L \setminus \mathring K$ is connected, hence a smoothly bounded compact annulus. Finally, we assume that $1/\varepsilon > \delta$ and, by nonflatness of $F$ and up to slightly enlarging $K$ if necessary, that
\begin{equation}\label{eq:F1234}
	F_i^{-1}(0)\subset \mathring K \quad \text{for all $i\in\{1,2,3,4\}$.}
\end{equation} 
The construction consists of two steps. 

\noindent{\em First deformation}. By \eqref{eq:F1234} and using Lemmas \ref{lemma:ext} and \ref{lemma:indf}, we reason as in the aforementioned sources (see e.g.\ \cite[Proof of Lemma 3.11.1, Step 1]{AlarconForstnericLopez2021Book}) in order to find
\begin{itemize}
\item smooth Jordan arcs $\{ \alpha_j \colon j \in \Z_n\} \subset b K$ ($n\ge 3$)
such that $\alpha_j$ and $\alpha_{j+1}$ have a common endpoint $p_j$ and are otherwise disjoint for $j \in \Z_n= \{0,1, \ldots, n-1\}$,  
\item smooth Jordan arcs $\{\beta_j \colon j \in \Z_n \} \subset b L$ 
such that $\beta_j$ and $\beta_{j+1}$ have a common endpoint $q_j$ and are otherwise disjoint for every $j \in \Z_n$, 
\item pairwise disjoint smooth Jordan arcs $\{ \gamma_j \colon j \in \Z_n \} \subset L \setminus \mathring K$ such that each $\gamma_j$ has $p_j$ and $q_j$ as endpoints and is otherwise disjoint from $bK\cup bL$, and the Runge compact set $S=K\cup\bigcup_{j\in\Z_n}\gamma_j\subset M$ is admissible,
\item closed discs $\{ D_j \colon j \in \Z_n \}\subset L \setminus \mathring K$ determined by $b D_j = \gamma_{j-1}  \cup \alpha_j  \cup \gamma_j \cup \beta_j$, $j \in \Z_n$ (so $\bigcup_{j \in \Z_n}  D_j = L \setminus \mathring K$; see Figure \ref{fig:proper} or  \cite[Fig.\ 3.1]{AlarconForstnericLopez2021Book}),
\item smoothly bounded closed discs $\{\Omega_j \colon j \in \Z_n\}$ with $\Omega_j \subset D_j \setminus ( \gamma_{j-1} \cup \alpha_j \cup \gamma_j)$ and 
$\Omega_j \cap \beta_j$ being a (connected) smooth Jordan arc for every $j \in \Z_n$, 
\end{itemize}
\begin{figure}[h!]
\begin{center}
\includegraphics[scale=.9]{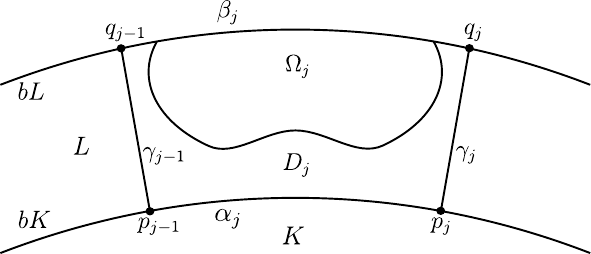}
\caption{Sets in the proof of Lemma \ref{lemma:proper}}
\label{fig:proper}
\end{center}
\end{figure}
and a nonflat holomorphic null curve $\wh F = (\wh F_1, \wh F_2, \wh F_3, \wh F_4) : L \setminus E \to \SL$ satisfying the following conditions.
\begin{enumerate}[label = \rm (\Roman*)]
\item $\wh F - F$ is continuous on $K$.  
\item $\| \wh F - F \|_{r, K} <\varepsilon/2$.
\item $\wh F- F$ vanishes to order $k$ at every point of $\Lambda \cup E$.
\item \label{ceros} $\wh F_i^{-1}(0)\subset \mathring K \cup \bigcup_{j \in \Z_n} \mathring \Omega_j$ for all $i\in\{1,2,3,4\}$.
\item \label{big} There is a map $\tau:\Z_n\to \{1,2,3,4\}$ such that $|\wh F_{\tau(j)} (p)| > \delta$ for all $p \in \overline{D_j \setminus \Omega_j}$  and $| \wh F_{\tau(j)} (p)| > 1 / \varepsilon$ for all $p \in \overline{\beta_j \setminus \Omega_j}$, $j\in\Z_n$.
\end{enumerate}
Condition \ref{ceros}, which is a technical addition to previous constructions, is possible by \eqref{eq:F1234}. Indeed, just define $\wh F$ on $\bigcup_{j\in\Z_n}\gamma_j$ in a suitable way and then choose the discs $\Omega_j$, $j\in\Z_n$, sufficiently big.

\noindent{\em Second deformation.} We keep arguing as in the aforementioned sources (see e.g.\ \cite[Proof of Lemma 3.11.1, Step 2]{AlarconForstnericLopez2021Book}) to obtain, via a  four steps  recursive application of Claim \ref{cl1} and Lemma \ref{lemma:fix} (taking into account \ref{ceros} when invoking the latter), a nonflat holomorphic null curve $\wt F =  (\wt F_1, \wt F_2, \wt F_3, \wt F_4) : L \setminus E \to \SL$ such that: 
\begin{itemize}
\item $\wt F - \wh F$ is continuous on $K$.
\item $\| \wt F - \wh F \|_{r, K}  <\varepsilon/2$.
\item $\wt F- \wh F$ vanishes to order $k$ at every point of $\Lambda \cup E$.  
\item $|\wt F_{\tau(j)} (p)| > \delta$  for all $p \in \overline{D_j \setminus \Omega_j}$  and $| \wt F_{\tau(j)} (p)| > 1 / \varepsilon$ for all $p \in \overline{\beta_j \setminus \Omega_j}$, $j\in\Z_n$,
where $\tau$ is the map in \ref{big}. 
\item $\max \big\{|\wt F_l (p) |\colon l \in\{1,2,3,4\}\setminus\{\tau(j)\}\big\} > 1 / \varepsilon$ for all $p\in\Omega_j$ and $j\in\Z_n$.
\end{itemize}
It is then clear from the above two lists of properties that the holomorphic null curve $\wt F$ satisfies the conclusion of Lemma \ref{lemma:proper}.
\end{proof}

%
%

\section{Completion of the proofs}\label{sec:c3}
\noindent
In this section we prove Lemma \ref{lemma:ind}. This shall complete the proof of Lemma \ref{lemma:indf}, and hence those of Lemma \ref{lemma:proper} and Theorem \ref{th:mt}.

If $S$ is set in an open Riemann surface, $M$, and $f \in \Oscr_{\infty} (S)$, $f \not \equiv 0$, we denote by
\begin{equation}\label{eq:orden}
	\ord_p (f) \in \Z
\end{equation}
the only integer such that $z^{-\ord_p (f)} f$ is holomorphic and nonzero at $p\in S$, where $z$ is any local holomorphic coordinate on $M$ with $z(p)=0$. Thus, $\ord_p (f)$ is the zero order of $f$ at $p$ when $\ord_p (f)>0$ and minus the pole order of $f$ at $p$ when $\ord_p (f)<0$, while $\ord_p (f) = 0$ means that $f$ is continuous at $p$ and $f(p) \not = 0$. 
%
%
\begin{proof}[Proof of Lemma \ref{lemma:ind}]
We assume that $ E \not = \varnothing$, otherwise the lemma holds true by \cite[Lemma 2.3]{AlarconCastro-InfantesHidalgo2023}. 
Up to passing to a larger $L$ if necessary, we may assume that $L \subset M$ is connected and Runge.
Write $f= (f_1, f_2, f_3)$. We may also assume without loss of generality that
\begin{equation}\label{eq:EX3Lambda}
		X_3^{-1} (0) \subset \Lambda
\end{equation}
and
\begin{equation}\label{eq:k>}
		k > 1 + \max\big\{ \sup\{  \ord_p(X_3) \colon p\in X_3^{-1}(0)\}\,,\, \max \{ o(p) \colon p \in E\}  \big\}\ge 1,
\end{equation}
where $o(p) = \max\{ |\ord_p (f_i)|  \colon i= 1,2,3\}$ for each $p \in E$; 
recall that $\varnothing\neq E \subset \mathring S$ and see \eqref{eq:orden}. 
%
%
Consider the following assertion.
\begin{claim}\label{cl:lemma}
The conclusion of the lemma holds if $S$ is a connected smoothly bounded compact domain which is a strong deformation retract of $L$ and $X : S \setminus E \to \C^3$ is a nonflat holomorphic null curve of class $\Oscr_\infty(S|E,\C^3)$.
\end{claim}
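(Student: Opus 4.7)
The plan is to combine a semiglobal Mergelyan--Mittag-Leffler approximation theorem with jet interpolation for nonflat holomorphic null curves in $\C^3$ (e.g., \cite[Lemma~2.3]{AlarconLopez2019} or a modification of \cite[Theorem~4.1]{AlarconVrhovnik2024X} dropping the embedding conclusion) with a finite-parameter spray perturbation to enforce the zero-preservation condition \ref{condc}.

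As a first step I would apply such a Mergelyan-type theorem to produce a nonflat holomorphic null curve $\wt X^0\colon L\setminus E\to\C^3$ of class $\Oscr_\infty(L|E,\C^3)$ satisfying \ref{conda}, \ref{conda1}, and \ref{condb} with a tunable $\varepsilon_0\in(0,\varepsilon)$ in place of $\varepsilon$. By Hurwitz theorem \cite{Hurwitz1895}, the jet condition \ref{condb}, the hypothesis \eqref{eq:k>} on $k$, and smallness of $\varepsilon_0$, the zero set of $\wt X^0_3$ on $S$ equals $X_3^{-1}(0)\subset\Lambda$ with matching multiplicities (see \eqref{eq:EX3Lambda}). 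Hence the spurious zeros of $\wt X^0_3$ form a finite set $Z\subset L\setminus S$, and the remaining task is to perturb $\wt X^0$ so as to eliminate $Z$ without disturbing the zeros at $X_3^{-1}(0)$ or the approximation and interpolation data.

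For this, I would construct a holomorphic spray of nonflat null curves $\{\wt X^t\colon L\setminus E\to\C^3\}_{t\in B}$, with $B\subset\C^{|Z|}$ a neighborhood of $0$ and $\wt X^t|_{t=0}=\wt X^0$, satisfying: (i) $\wt X^t-\wt X^0$ is continuous and small in $\Cscr^r(S)$ uniformly in $t$; (ii) $\wt X^t-\wt X^0$ vanishes to order $k$ at every point of $\Lambda\cup E$; and (iii) the map $t\mapsto\bigl(\wt X^t_3(z)\bigr)_{z\in Z}$ from $\C^{|Z|}$ to $\C^{|Z|}$ is a submersion at $t=0$. By (iii) and the implicit function theorem, for generic small $t\in B$ one has $\wt X^t_3(z)\neq 0$ for every $z\in Z$; by (i) and a further Hurwitz argument no new zeros appear on a neighborhood of $S$; and by (ii) the interpolation at $\Lambda\cup E$ is preserved. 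Setting $\wt X:=\wt X^t$ for such $t$ yields the desired null curve after shrinking $\varepsilon_0$.

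The main obstacle is the construction of such a spray with all three properties simultaneously. I would build it via the spinor (Weierstrass) representation $f_1=\tfrac12(1-g^2)h$, $f_2=\tfrac{\igot}{2}(1+g^2)h$, $f_3=gh$ of $f=dX/\theta$, perturbing the pair $(g,h)$ within a finite-dimensional family of meromorphic data whose spray vectors are multiplied by meromorphic functions vanishing to order $k$ at $\Lambda\cup E$ (Weierstrass-type products), and killing the finitely many periods of the resulting $1$-form on $L\setminus E$ via a standard period-dominating spray, as in \cite[Chapter~3]{AlarconForstnericLopez2021Book}. The surjectivity in (iii) can be arranged since $|Z|$ is finite and the spray has ample degrees of freedom to displace $\wt X^t_3(z)$ at each $z\in Z$ in linearly independent directions.
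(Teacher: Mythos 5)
Your first step and your reduction of the problem to a finite ``spurious zero set'' $Z\subset L\setminus S$ match the paper's own reduction (it applies \cite[Theorem 3.1]{AlarconLopez2019} and Hurwitz to replace $X$ by an extension whose third component has controlled zeros on a slightly larger domain), but the way you then propose to finish has a genuine gap. The transversality/implicit function argument in your spray only guarantees that $\wt X^t_3(z)\neq 0$ at the finitely many points $z\in Z$ for generic small $t$. This is much weaker than condition \ref{condc}: a zero of $\wt X^0_3$ at an interior point $z_0\in \mathring L\setminus S$ cannot be destroyed by any sufficiently small perturbation. By Rouch\'e's theorem the number of zeros of $\wt X^t_3$ in a small disc around $z_0$ is constant for $t$ near $0$, so the zeros merely move slightly off the set $Z$ while remaining in $L\setminus S$; hence $\wt X^t{}_3^{-1}(0)\supsetneq X_3^{-1}(0)$ for every small $t$ and \ref{condc} fails. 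Making the perturbation large near $Z$ does not help either, since you then lose all control of where the zeros (which are governed by the global divisor of $\wt X_3$ on $L$, not by values at a finite set) reappear, and the IFT only produces small parameters anyway. In short, ``no zeros in $L\setminus S$'' is not a condition that can be arranged generically by a finite-dimensional spray displacing finitely many values; it is a divisor-prescription problem.

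The paper's proof of the claim avoids creating spurious zeros in the first place: it extends the Weierstrass data with \emph{prescribed divisor}. Concretely, it writes $X_3=\sigma\tau$ with $\tau\in\Oscr_\infty(L)$ having the divisor of $X_3$ (Weierstrass--Florack interpolation) and $\sigma=X_3/\tau\colon S\to\C^*$ nonvanishing, approximates $\sigma$ by holomorphic maps $L\to\C^*$ with jet interpolation using that $\C^*$ is Oka, and sets $\wh X_3=\sigma\tau$, so the extension of the third component has exactly the divisor of $X_3$ on all of $L$. It then rebuilds the first two components from $\eta=f_1+\igot f_2$ and $f_3^2/\eta$ (your spinor data, but used for divisor control rather than for displacing values), and only afterwards corrects the periods by a period-dominating spray whose perturbations vanish to order $\geq k-1$ at $\Lambda\cup E$ and are small in $\Cscr^{r-1}$ on \emph{all of} $L$; since $\wh X_3$ is zero-free off $\Lambda\subset\mathring S$ and the perturbation respects the jets at the zeros, Hurwitz then gives $\wt X_3^{-1}(0)=\wh X_3^{-1}(0)=X_3^{-1}(0)$. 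If you want to salvage your approach, you need to replace the ``kill the values at $Z$'' step by this kind of global divisor control of the third component before the period correction.
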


Let us reduce the proof of the lemma to the particular case in Claim \ref{cl:lemma}.
For this, \cite[Theorem 3.1]{AlarconLopez2019} (see also \cite[Theorem 3.8.2]{AlarconForstnericLopez2021Book} and \cite[Theorem 5.1]{AlarconVrhovnik2024X})\footnote{Once again, the assumptions in these results require $X$ to have effective poles at all points in $E$ but, as in the aforementioned cases of \cite[Theorem 4.1]{AlarconVrhovnik2024X} and \cite[Theorem 6.1]{AlarconVrhovnik2024X}, this condition is not necessary.} furnishes us with an open domain $U\supset L$ and a nonflat holomorphic null curve $Y=(Y_1,Y_2,Y_3) \colon U \setminus E \to \C^3$ such that $Y-X$ is continuous on $S$, $Y - X $ is as close as desired to $0\in \C^3$ in the $\Cscr^r$ norm on $S$, and $Y - X$ vanishes to order $k$ everywhere on $\Lambda \cup E$. Then, $Y|_{L\setminus E}$ formally satisfies conditions \ref{conda}, \ref{conda1},  and \ref{condb} in the statement of the lemma, but it need not meet \ref{condc}. Taking \ref{condb}, \eqref{eq:EX3Lambda}, and \eqref{eq:k>} into account, and assuming that $Y-X$ is close enough to $0$ on $S$, Hurwitz theorem (see \cite{Hurwitz1895} or \cite[VII.2.5, p.148]{Conway1973}) guarantees that $Y_3^{-1}(0)\cap  S=X_3^{-1}(0)\subset\Lambda\subset\mathring S$, while $Y_3$ has at most finitely many zeros on $\mathring L\setminus S$. We can then find a connected smoothly bounded Runge compact domain $S'\subset M$ such that  $S \Subset S' \Subset L$,  $S'$ is a strong deformation retract of $L$, and $Y_3$ vanishes nowhere on $S'\setminus S$. Replacing $X$ by $Y|_{S'\setminus E}$, which formally satisfies   \ref{condc}, reduces the proof of Lemma \ref{lemma:ind} to proving Claim \ref{cl:lemma}. Cf.\ \cite[Proof of Lemma 2.3]{AlarconCastro-InfantesHidalgo2023}.

%
%
\begin{proof}[Proof of Claim \ref{cl:lemma}]
The proof is a modification of that of \cite[Lemma 2.3]{AlarconCastro-InfantesHidalgo2023} that allows to deal with poles. In particular, we proceed with a typical method of controlling periods as in \cite[Ch.\ 3]{AlarconForstnericLopez2021Book} and, in addition, take ideas from  \cite{AlarconLopez2019}. We split the proof in three main steps.

\noindent {\em Step 1: The spray.} We shall construct a suitable period dominating spray of holomorphic maps with range in the punctured null quadric $\mathbf{A}_*$; see \eqref{eq:nullquadric} and \eqref{eq:sprayf0}.
For this, fix a point $p_0\in\mathring S\setminus \Lambda$ and a finite family $ \{ C_j \}_{j = 1}^n$ of smooth Jordan arcs and curves in $\mathring S$ satisfying the following properties (see \cite{FarkasKra1992} or \cite[Lemma 1.12.10]{AlarconForstnericLopez2021Book}):
\begin{itemize}
\item $C_i\cap C_j=\{p_0\}$ for any $i\neq j\in\{1,\ldots,n\}$.
\item $\{ C_1, \ldots, C_{\mu} \}$, where $\mu\in\{0,\ldots,n\}$ is the cardinal of $\Lambda$, are smooth Jordan arcs with the initial point $p_0$ and the final point in $\Lambda$.
\item  $\{ C_{\mu + 1}, \ldots, C_n\}$ are smooth oriented Jordan curves which together span the first homology group $\Hcal_1 ( S, \z)=\Hcal_1 ( L, \z)\cong \Z^{n-\mu}$.
\item The union $C = \bigcup_{j=1}^n C_j\subset \mathring S$ is a Runge compact set in $M$ and $\bigcup_{j=1}^n C_j$ is a strong deformation retract of $S$, hence of $L$.
\end{itemize}
Denote
\[
	\Ccal^0 (C, f) = \{ h \in \Ccal^0 ( C \setminus E, \mathbf{A}_* ) \colon h - f \in \Ccal^0 (C, \C^3) \},
\]
and let  $\Qcal= ( \Qcal_1,\ldots, \Qcal_n) :  \Ccal^0 (C, f) \to (\C^3)^n$ be the period map defined by
\begin{equation}\label{eq:defq}
	\Ccal^0 (C, f) \ni h\longmapsto \Qcal_j (h) = \int_{C_j} (h-f) \theta \in \C^3,\quad j =1,\ldots,n.
\end{equation}
Consider the map $f = (f_1, f_2, f_3) = d X / \theta \in \Oscr_{\infty} (S|E, \mathbf{A}_*)$ and fix a meromorphic function $g$ on $M$
such that $g$ is holomorphic and nowhere vanishing on $M\setminus E$ and so is the product $gf$ on $S$, hence
\begin{equation}\label{eq:f0}
	f_0 := g f \in \Oscr (S, \mathbf{A}_*);
\end{equation}
note that $\mathbf{A}_*$ is conical. Observe that $g\in\Oscr_\infty(M|E)$ must have effective poles (respectively, zeros) at those points of $E$ where the meromorphic extension of $f$ to $S$ has a zero (respectively, pole), and of the same order, in order to make sure that $f_0$ has the range in $\mathbf{A}_*\subset\C^3\setminus\{0\}$.
Such a function $g$ is provided by the classical Weierstrass-Florack interpolation theorem (see \cite{Florack1948} or, e.g., \cite[Theorem 1.12.13]{AlarconForstnericLopez2021Book}).
Following \cite[Proof of Theorem 3.1]{AlarconLopez2019}, there are complete  holomorphic vector fields $V_1, \hdots , V_m$ on $\C^3$ ($m \ge 3$) tangential to $\mathbf{A}_*$ along $\mathbf{A}_*$, and functions $h_{i,j} \in (\Oscr (M)^m)^n$, $i =1,\hdots, m$, $j = 1, \hdots n$, such that
\begin{equation}\label{eq:hij}
	\ord_p (h_{i,j} ) \ge 2  k \quad \, \text{for every }  p \in \Lambda\cup E,\,  i =1,\hdots, m, \, j = 1, \hdots n.
\end{equation}
Denote by $\phi^i_t$ the flow of $V_i$ over $\mathbf{A}_*$ ($t \in \C$) and  set $\Phi \colon (\C^m)^n \times M \times \mathbf{A}_* \to \mathbf{A}_*$ to be the holomorphic map
\[
	\Phi (\zeta, p, z) = (\phi^1_{ \zeta_{1,1} h_{1,1} (p) } \circ \hdots  \circ \phi^m_{\zeta_{m,1} h_{m,1} (p)}\circ \hdots \circ \phi^1_{\zeta_{1,n} h_{1,n} (p) }  \circ  \hdots \circ \phi^m_{\zeta_{m,n} h_{m,n} (p)}) (z),
\]
where $\zeta= ((\zeta_{i,j})_{i=1,\hdots,m})_{j=1,\hdots, n} \in (\C^m)^n$. 
In view of \eqref{eq:f0} we may consider the holomorphic spray
\begin{equation}\label{eq:sprayf0}
	\Phi_0 \colon (\C^m)^n \times S   \to \mathbf{A}_*, \quad	\Phi_{0} (\zeta, p) = \Phi(\zeta, p , f_0 (p)).
\end{equation}
We can choose the vector fields $V_i$ and the functions $h_{i,j}$ so that the following properties are satisfied (see \cite[Proof of Theorem 3.1]{AlarconLopez2019}).
\begin{enumerate}[label=(\Roman*)]
\item \label{coref} $\Phi_{0} ( 0, \cdot ) = f_0$  (we say that $f_0$ is the {\em core} of the spray $\Phi_0$).
\item \label{intf}$f_0 -  \Phi_{0}(\zeta, \cdot) $ vanishes to order $k$ everywhere on $\Lambda\cup E$ for every $\zeta \in (\C^m)^n$.
\item \label{eq::domination} The spray $\Phi_{0}$ is $\Qcal^*$-dominating at $\zeta=0$, in the sense that the map
$\Qcal^* :(\C^m)^n \to (\C^3)^n$ given by
\[
	\Qcal^* (\zeta) = \Qcal \Big(  \frac{\Phi_{0} (\zeta, \cdot )}{g}    \Big)
\]
is a submersion at $\zeta = 0$.
\newcounter{Mayus4}\setcounter{Mayus4}{\value{enumi}}
\end{enumerate}
Concerning \ref{eq::domination}, see \eqref{eq:defq} and note that  \eqref{eq:f0}, \ref{intf}, and \eqref{eq:k>} guarantee that $\Phi_0 (\zeta, \cdot)/g  \in \Cscr^0 (C,f)$ for all $\zeta \in (\C^m)^n$, so $\Qcal^*$ is well defined.
		
\noindent {\em Step 2: Approximation of the core.} We shall now approximate $f$ by a map $\hat f=(\hat f_1,\hat f_2, \hat f_3)\in\Oscr_{\infty}(L | E,\mathbf{A}_*)$  ensuring that $\hat f_3 \theta = d \wh X_3$ on $L \setminus E$ for a function $\wh X_3\in \Oscr_{\infty} ( L | E )$ such that $\wh X_3 - X_3$ extends to $S$ as a continuous map vanishing to order $k$ on $\Lambda\cup E$, $\wh X_3 - X_3$ is as close as desired to $0$ on $S$, and $\wh X_3^{-1}(0) = X_3^{-1}(0)$.
For this, call 
\[
	A=f_3^{-1}(0)\subset S.
\] 
Since $X$ is nonflat, we have that $f_3$ is not identically zero and so $A$ is finite. Up to passing to larger $S$ and $k$ if necessary, we may assume that $A\subset \mathring S$ and
\begin{equation}\label{eq:k>>}
	k>\sup\{\ord_p (f_3): p\in A\};
\end{equation}	
cf.\ \eqref{eq:k>}.
Since $\c^*$ is an Oka manifold (see \cite[Def.\ 1.2]{Forstneric2009CR} and \cite[Corollary 5.6.4]{Forstneric2017E})
the Runge approximation theorem with jet interpolation  for maps into Oka manifolds (see \cite[Theorem 5.4.4]{Forstneric2017E} or, e.g., \cite[Theorem 1.13.3]{AlarconForstnericLopez2021Book}) gives a holomorphic function $\wh X_3 \colon L  \setminus E \to \C$ such that:
\begin{enumerate}[label= \rm (\roman*)]
\item \label{whX3a} $\wh X_3 - X_3$ extends continuously to $S$ and $\| \wh X_3  - X_3 \|_{r,S} \approx 0$, meaning that the norm is sufficiently small according to the needs in the subsequent argumentation.
\item \label{whX3i} $\wh X_3- X_3$ vanishes to order $k$ everywhere on $\Lambda\cup E\cup A \cup\{p_0\}$. 
\item \label{whX3z} The divisor of $\wh X_3$ on $L\setminus E$ equals that of $X_3$ on $S\setminus E$.
\newcounter{Mayus3}\setcounter{Mayus3}{\value{enumi}}
\end{enumerate}
It follows that $\wh X_3$ is meromorphic on $L$ and its divisor on $L$ equals the one of $X_3$ on $S$.
Let us explain the above approximation more carefully. By  Weierstrass interpolation theorem there exists $\tau \in \Oscr_{\infty} (L)$ whose divisor on $L$ equals that of $X_3$ on $S$. Then, the function $X_3/\tau$ is holomorphic and nowhere vanishing on $S$.  By \cite[Theorem 1.13.3]{AlarconForstnericLopez2021Book}, we may approximate $X_3/\tau\colon S \to\C^*$ uniformly on $S$ by a holomorphic function $\sigma\colon L\to\C^*$ which agrees with $X_3/\tau$ to a high enough order everywhere on $\Lambda\cup E\cup A \cup \{p_0\}$. The function $\wh X_3=\sigma\tau\colon L\to\C$ then clearly satisfies the first part of \ref{whX3a}, as well as \ref{whX3i} and \ref{whX3z}. By the maximum modulus principle, we have that
\[
	\| \wh X_3  - X_3 \|_{0, S} = \| \wh X_3  - X_3 \|_{0, bS} = \big\| \tau ( \sigma - \frac{X_3}{\tau} ) \big\|_{0,b S} \le   \| \tau \|_{0, b S} \big\|  \sigma - \frac{X_3}{\tau} \big\|_{0,b S}.
\]
By the choice of $\tau$, the function $\tau|_{ b S} \colon bS \to \C$ is nonvanishing and analytic, so its $\Cscr^0$-maximum norm is well defined and non zero. Thus, since $\wh X_3 - X_3$ is holomorphic on $S$, the second part of \ref{whX3a} holds provided that $\sigma$ is close enough to $X_3 / \tau$ on $S$.
		
Define 
\begin{equation}\label{eq:f3}
	\hat  f_3 := d \wh X_3/\theta \in\Oscr_{\infty}(L | E).
\end{equation}
Properties \ref{whX3a}, \ref{whX3i}, and
\eqref{eq:k>>} together with Hurwitz theorem guarantee that: 
\begin{enumerate}[label= \rm (\roman*)]
\setcounter{enumi}{\value{Mayus3}}
\item \label{whf3a} $\hat f_3 - f_3$ extends continuously to $S$ and $\| \hat f_3  - f_3 \|_{r-1, S} \approx 0$.
\item \label{whf3i} $\hat f_3-f_3$ vanishes to order $k-1$ everywhere on $\Lambda\cup E\cup A $. 
\item \label{whf3z} The divisor of $\hat f_3$ on $S$ equals that of $f_3$.
\setcounter{Mayus3}{\value{enumi}}
\end{enumerate}		
Set $\eta := f_1 + \igot f_2 \in \Oscr_{\infty}(S| E)$. Since the map $f=(f_1,f_2,f_3)$ has range in $\mathbf{A}_*$ \eqref{eq:nullquadric}, it turns out that
\begin{equation}\label{eq:f1f2}
	\begin{split}
		\frac{f_3^2}{\eta}=-f_1+\igot f_2\in\Oscr_{\infty}(S | E ),\quad \quad \quad \quad \quad\quad\quad\quad \\
		f_1 =   \frac{1}{2} \Big( \eta - \frac{f_3^2}{\eta }\Big),
		\quad\text{and}\quad
		f_2 = -\frac{\igot}{2} \Big( \eta + \frac{f_3^2}{\eta } \Big) \quad \text{on } S \setminus E, 
	\end{split}
\end{equation}
and $\eta$ and $f_3^2/\eta$ have no common zeros on $S \setminus E$. 
Arguing as above, \cite[Theorem 1.13.3]{AlarconForstnericLopez2021Book} provides a function $\hat \eta\in\Oscr_{\infty}(L|E)$ such that:
\begin{enumerate}[label= \rm (\roman*)]
\setcounter{enumi}{\value{Mayus3}}
\item \label{whea} $\hat \eta - \eta$ extends continuously to $S$ and $\| \hat \eta-\eta \|_{r-1, S}\approx 0$.
\item \label{whei} $\hat \eta-\eta$ vanishes to order $k-1$ at every point in $\Lambda\cup E\cup A$.
\item \label{whez} The divisor of $\hat \eta$ on $L$ agrees with that of $\eta$ on $S$.
\setcounter{Mayus3}{\value{enumi}}	
\end{enumerate}
By \eqref{eq:f3}, \ref{whf3z},  \eqref{eq:f1f2}, and \ref{whez}, the function $\hat f_3^2/\hat \eta$ is holomorphic on $L\setminus E$, and hence so are
\[
	\hat f_1 :=   \frac{1}{2} \Big( \hat \eta - \frac{ \hat f_3^2}{ \hat \eta }\Big) \quad \text{and}\quad \hat f_2 := -\frac{\igot}{2} \Big( \hat \eta + \frac{\hat f_3^2}{\hat \eta } \Big). 
\]
In view of \ref{whf3z} and \ref{whez}, $\hat f_3^2/\hat \eta$ and $f_3^2/\eta$ have the same divisor on $S$, thus $\hat \eta$ and $\hat f_3^2/\hat \eta$ have no common zeros on $L\setminus E$ (recall that $\eta$ and $f_3^2/\eta$ have no common zeros on $S\setminus E$ and $\hat \eta$ is holomorphic and vanishes nowhere on $L\setminus S$). This shows that  $\hat f:=(\hat f_1, \hat f_2,\hat f_3) \in \Oscr_{\infty} (L | E, \mathbf{A}_*)$.
Moreover, it is easily seen that: 
\begin{enumerate}[label= \rm (\roman*)]
\setcounter{enumi}{\value{Mayus3}}
\item \label{whfa}$ \hat f - f$ extends continuously to $S$ and  $\| \hat f - f \|_{r-1, S} \approx 0 $ on $S$.
\item \label{whfi} $\hat f-f$ vanishes to order $k-1$ at every point in $\Lambda\cup E$.
\setcounter{Mayus3}{\value{enumi}}
\end{enumerate}

\noindent {\em Step 3: Correction of periods.} By \ref{whX3a}, \ref{whX3i}, and \eqref{eq:f3}, we have that 
\[
	\int_{C_j} (\hat f_3 - f_3)\theta=  0\quad \text{for all $j=1,\ldots,n$}. 
\]
However, the analogous conditions for $\hat f_1$ and $\hat f_2$ need not hold. To arrange this, set
\begin{equation}\label{eq:hf0}
	\hat f_0 := g \hat f  \in \Oscr (L , \mathbf{A}_*),
\end{equation}
see \eqref{eq:f0}, \ref{whfa}, \ref{whfi}, and \eqref{eq:k>}.  Since $\hat f_0$ has range in $\mathbf{A}_*$, the map
\[
	\wh \Phi_{0} \colon (\C^m)^n \times L \to \mathbf{A}_*, \quad	\wh\Phi_{0} (\zeta, p )  = \Phi (\zeta, p , \hat f_0(p)),
\]
is a holomorphic spray (cf.\ \eqref{eq:sprayf0}) such that:	
\begin{enumerate}[label= \rm (\Roman*)]
\setcounter{enumi}{\value{Mayus4}}
\item \label{corewhf} $\wh \Phi_{0} (\cdot , 0) = \hat f_0$ (that is, $\hat f_0$ is the core of the spray).
\item \label{intwhf} $\hat f_0 - \wh \Phi_{0} ( \zeta, \cdot )$ vanishes to order $2k-1$ everywhere on $\Lambda \cup E$ for every $\zeta \in (\C^m)^n$; see \eqref{eq:hij} and \cite[Lemma 2.2]{AlarconCastro-Infantes2019APDE}.
\setcounter{Mayus4}{\value{enumi}}
\end{enumerate}
On the other hand \eqref{eq:hf0}, \ref{intwhf}, and \eqref{eq:k>} ensure that 
\begin{equation}\label{eq:intspray}
	\ord_p \Big( \frac{\wh \Phi_{0} (\zeta, \cdot )}{g}   - \hat f  \Big) \ge k -1>0 \quad \text{for every } \zeta \in (\C^m)^n,\;  p \in \Lambda \cup E.
\end{equation}
In particular, $\wh \Phi_{0} (\zeta, \cdot ) / g-\hat f$ is holomorphic on $L$.
Taking also \ref{whfa} into account, we have that $\wh \Phi_{0} (\zeta, \cdot ) / g \in \Cscr^0 (C, f)$, and so we are entitled to define the period map $ \wh \Qcal :(\C^m)^n  \to (\C^3)^n$ given by
\[
	\wh \Qcal (\zeta) = \Qcal \Big(  \frac{\wh \Phi_{0} (\zeta, \cdot )}{g}    \Big),  \quad \zeta\in (\C^m)^n;
\]
see \eqref{eq:defq} and \ref{eq::domination}.
Properties \ref{coref}, \ref{eq::domination}, \ref{whfa}, and \ref{corewhf} ensure that $\wh \Qcal$ and $\Qcal^*$ are so close that $\wh \Qcal \colon U \to \wh \Qcal (U) \subset (\C^3)^n$ is a submersion on a sufficiently small compact neighborhood $U$ of $0 \in (\C^m)^n$.  Hence the implicit function theorem provides a parameter $\zeta_0 \in U\subset (\C^m)^n$ such that $\wh \Qcal(\zeta_0)= \Qcal^* (0) =  \Qcal(f) = 0$; see \eqref{eq:defq}. Thus,
\[
	\tilde f := \frac{\wh \Phi_{0} (\zeta_0, \cdot )}{g}  \in \Oscr_{\infty} (L | E, \mathbf{A}_*)
\]
satisfies
\begin{equation}\label{eq:(D)}
	\int_{C_j} (\tilde f - f)\theta = 0\quad \text{for every $j =1, \hdots, n$}
\end{equation} 
and the following properties:
\begin{enumerate}[label=(\Alph*)]
\item \label{extend} $\tilde f - \hat f$ extend continuously to $L$; see \eqref{eq:intspray}.
\item \label{aprox} $\| \tilde f - \hat f\|_{r-1, L}  \approx 0$; see \eqref{eq:hf0} and \ref{corewhf}.
\item   \label{int}  $\ord_p (\tilde f - \hat f ) \ge k-1$ for every $p \in \Lambda \cup E$; see \eqref{eq:intspray}.
\end{enumerate}
Since $f \theta=dX$ is exact on $S \setminus E$ and $S$ is a strong deformation retract of $L$, it follows from the definition of $\{C_j\}_{j=1}^n$ in Step 1, \ref{extend}, \ref{whfa}, and \eqref{eq:(D)}, that $\tilde f \theta$ is exact on $L \setminus E$. Thus we may define the holomorphic null curve $\wt X = (\wt X_1, \wt X_2, \wt X_3) \colon L \setminus E \to \C^3$ given by 
\[
	\wt X (p)  = X (p_0) + \int_{p_0}^{p} \tilde f \theta, \quad p \in L \setminus E.
\]
It is clear in view of \ref{extend}, \ref{aprox}, \ref{int}, \ref{whfa}, \ref{whfi}, and \eqref{eq:(D)} that $\wt X$ is nonflat  and satisfies \ref{conda}, \ref{conda1}, and \ref{condb}, whenever the approximations in  \ref{whfa}  and \ref{aprox} are close enough.

To finish, let us check \ref{condc}. 
By \eqref{eq:EX3Lambda}, \ref{whX3i}, \ref{int}, \ref{whX3z}, and \eqref{eq:(D)}, we have that $\wt X_3 - \wh X_3$ extends continuously to $L$ and vanishes to order $k$ everywhere on $X_3^{-1}(0) \cup \{ p_0\}=\wh X_3^{-1} (0)\cup\{p_0\}$, so $\wh X_3^{-1}(0) \subset  \wt X_3^{-1}(0)$. Moreover, condition \ref{aprox} shows that
$\| \wt X_3 - \wh X_3  \|_{r, L} \approx 0$.
By \eqref{eq:EX3Lambda} and \ref{whX3z}, $\wh X_3^{-1} (0) \subset \Lambda \subset \mathring S$, so Hurwitz theorem and  \eqref{eq:k>} imply that $\wh X_3^{-1}(0) = \wt X_3^{-1}(0)$ provided that $\| \wt X_3 - \wh X_3  \|_{r, L}$ is small enough.
Property \ref{condc} then follows by \ref{whX3z}.
This completes the proof.	
\end{proof}	

Lemma \ref{lemma:ind} is proved.
\end{proof}

By what has been done in Sections \ref{sec:lindf} and \ref{sec:lproper}, Lemma \ref{lemma:indf} and hence Lemma \ref{lemma:proper} are now proved. This completes the proof of Theorem \ref{th:mt} in Section \ref{sec:ms}.

%
%

\section{$\CMC$ surfaces and faces}\label{sec:bryant}
\noindent
In this section we establish approximation results with interpolation for both Bryant surfaces in $\h^3$ and $\CMC$ faces in $\S^3_1$, extending Corollaries \ref{co:bryant} and \ref{co:deSitter}.

Let us first explain with more detail the canonical projection $\pi_H:  \SL \to \h^3$; see \eqref{eq:pih} below. Denote by $\mathbb{L}^4$ the Minkowski space of dimension $4$ with the canonical Lorentzian metric of signature $(-+++)$,
and consider the hyperboloid model for the hyperbolic $3$-space
\[
	\h^3 = \left\{ (x_0, x_1, x_2, x_3) \in 	\mathbb{L}^4 : - x_0^2 +  x_1^2 + x_2^2 + x_3^2 = -1, x_0  > 0  \right\}
\]
with metric induced by $\mathbb{L}^4$. 
Identify $\mathbb{L}^4$ with the space of hermitian matrices ${\rm Her}(2)\subset \Mcal_2(\C)$ by
\[
	\mathbb{L}^4 \ni (x_0, x_1, x_2, x_3) \longleftrightarrow  \left(  \begin{array}{cc}
		x_0 + x_3 & x_1 + \igot x_2 \\
		x_1 - \igot x_2 & x_0- x_3
	\end{array} \right) \in {\rm Her}(2).
\]
Under this identification it turns out that
\[
	\h^3 = \{ A \overline{A}^T : A \in \SL \} =  \SL / \rm{SU}_2,
\]
where $\overline \cdot$ and $\cdot^T$ mean complex conjugation and transpose matrix, respectively, and $ \rm{SU}_2 \subset \SL$ is the set of unitary matrices.
Recall that the projection
\begin{equation}\label{eq:pih}
	\pi_H \colon  \SL \to \h^3,\quad \pi_H (A) =  A\, \overline{A}^T,
\end{equation}
is proper and takes holomorphic null curves in $\SL$ to conformal $\CMC$ immersions in $\h^3$. 
Conversely, every simply connected Bryant surface in $\h^3$ lifts to a holomorphic null curve in $\SL$.

The following extension of Corollary \ref{co:bryant} is the first main result of this section.
%
%
\begin{theorem}\label{th:Bryant} 
Let $M$ be an open Riemann surface, $K \subset M$ a locally
connected closed subset all whose connected components are smoothly bounded simply connected compact domains, $\Lambda$ and $E$ a pair of disjoint closed discrete subsets of $M$ with $\Lambda \subset \mathring K$ and $E \cap K = \varnothing$, and $k:\Lambda\to\n$ and $m: E \to\n$ maps. Then,
every conformal $\CMC$ immersion $\varphi: K \to\h^3$ can be approximated uniformly on $K$ by almost proper (hence complete) conformal $\CMC$ immersions $\psi: M\setminus E\to\h^3$ such that:
\begin{enumerate}[label= \rm (\roman*)]
\item \label{prop1} The immersions $\psi$ and $\varphi$ have a contact of order $k(p)$ at every point $p\in\Lambda$.
\item \label{prop2} For each $p\in E$, the end of $\psi$ corresponding to $p$ is of finite total curvature, regular, and of multiplicity $m(p)$; it is in addition smooth if $m(p)=1$. 
\item\label{prop3} $\psi$ lifts to a holomorphic null curve $M\setminus E\to\SL$ of class $\Oscr_\infty(M|E,\SL)$ with an effective pole at each point of $E$.
	\end{enumerate}
Moreover, if $\varphi  : K \to \h^3$ is proper then the approximation can be done by proper conformal $\CMC$ immersions $\psi:M\setminus E$ with these properties. 
\end{theorem}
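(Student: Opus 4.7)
The plan is to lift $\varphi$ to a meromorphic null curve in $\SL$, extend it meromorphically to neighborhoods of the points of $E$ by local models realizing the prescribed end behavior, and then apply Theorem \ref{th:mt} to obtain the desired global immersion.

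Since each component of $K$ is simply connected, Bryant's representation (see \eqref{eq:pih} and \cite{Bryant1987Asterisque}) yields a holomorphic null curve $F:K\to\SL$ with $\pi_H\circ F=\varphi$. For each $p\in E$, I would fix a pairwise disjoint family of smoothly bounded simply connected compact discs $D_p\subset M\setminus K$ centered at $p$ and construct a meromorphic null curve $F_p\in\Oscr_\infty(D_p|\{p\},\SL)$ with an effective pole at $p$ whose projection $\pi_H\circ F_p$ is a regular $\CMC$-1 end of finite total curvature and multiplicity $m(p)$, smooth whenever $m(p)=1$. Such local models are classical in the theory of Bryant surfaces and can be written down explicitly via the Small formula or Bryant's representation by prescribing the order and principal part of the pole of the lift in accordance with the desired end invariants. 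Setting $S:=K\cup\bigcup_{p\in E}D_p$ and combining $F$ with the $F_p$ then yields a generalized null curve $(F,f\theta)$ from $S\setminus E$ to $\SL$ of class $\Ascr^r_\infty(S|E)$ for any $r\in\N$ and a suitable nowhere vanishing holomorphic $1$-form $\theta$ on $M$.

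I would then apply Theorem \ref{th:mt} to this data with interpolation orders $k(p)+1$ at each $p\in\Lambda$ and sufficiently large integers $\tilde m(p)$ at each $p\in E$ (to be specified below). The theorem produces a nonflat holomorphic null curve $\wt F\in\Oscr_\infty(M|E,\SL)$ which approximates $F$ uniformly on $S$, agrees with $F$ to the prescribed order on $\Lambda\cup E$, has an effective pole at each $p\in E$, and is almost proper (respectively proper if $\varphi$ is proper on $S\setminus E$, which persists after slightly enlarging the $D_p$ so that $F$ is proper on $(K\cup\bigcup_{p\in E}D_p)\setminus E$). Setting $\psi:=\pi_H\circ\wt F:M\setminus E\to\h^3$ then gives the candidate $\CMC$-1 immersion. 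Properties \ref{prop1} and \ref{prop3} follow at once: jet interpolation of $\wt F-F$ to order $k(p)+1$ at $p\in\Lambda$ yields contact of order $k(p)$ between $\psi$ and $\varphi$ at $p$, while \ref{prop3} is exactly the lifting assertion provided by Theorem \ref{th:mt}. Almost properness (resp.\ properness) of $\wt F$ is inherited by $\psi$ via properness of $\pi_H$, and almost properness in turn implies completeness of $\psi$.

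The main technical hurdle is \ref{prop2}: ensuring that near each $p\in E$ the end of $\psi$ is regular, of finite total curvature, and of multiplicity exactly $m(p)$ (smooth if $m(p)=1$). These invariants are determined by a finite jet of the lift at $p$, comprising the pole order of $F_p$ together with a bounded number of subsequent Laurent coefficients. Choosing $\tilde m(p)$ strictly larger than this critical jet order guarantees that the principal parts of $\wt F$ and $F_p$ coincide at $p$ up to a holomorphic remainder of high vanishing order, so that $\pi_H\circ\wt F$ inherits the end behavior built into the local model; smoothness in the case $m(p)=1$ follows from the corresponding classical criterion for smooth regular ends of Bryant surfaces.
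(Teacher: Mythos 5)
Your overall strategy is the same as the paper's: lift $\varphi$ on the simply connected components of $K$, attach explicit meromorphic local models with effective poles near the points of $E$, apply Theorem \ref{th:mt} with sufficiently high interpolation orders, and project by $\pi_H$. Properties \ref{prop1}, \ref{prop3}, and the (almost) properness and completeness statements are handled exactly as in the paper.

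The genuine gap is in \ref{prop2}, where you delegate the entire technical content to two unproved assertions. First, you do not actually produce the local models: the paper writes them down explicitly (Cases 1 and 2 of its proof), and the point of those formulas is not only to realize an end of multiplicity $m(p)$ but to do so with $\ord_p(F_1)\neq\ord_p(F_3)$, which is what makes the multiplicity equal to $|\ord_p(F_3)-\ord_p(F_1)|=m(p)$ and hence stable under any perturbation $\wt F$ with $\wt F-F$ vanishing to order $k(p)=1+\max_i|\ord_p(F_i)|$ at $p$ (see \eqref{eq:kbryant} and \eqref{eq:condm}). Second, your key claim that the end invariants are ``determined by a finite jet of the lift'' and that smoothness for $m(p)=1$ ``follows from the corresponding classical criterion'' is precisely what has to be proved; in the paper this is Lemma \ref{lemma:m}. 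Its proof computes the Hopf differential of $\pi_H\circ F$ and invokes Umehara--Yamada's multiplicity formula $m=\sqrt{(\ord_0(\omega)+1)^2+4\hat q_{-2}}$, which is only available when the immersion is not totally umbilic and its secondary Gauss map extends holomorphically across the puncture --- two hypotheses you never verify for $\psi$ (the paper arranges the first by adding a point to $\Lambda$ if necessary and taking the approximation on $K$ close enough, and the second via \eqref{eq:secondary}, \eqref{eq:kbryant}, and the interpolation condition at $E$). Moreover, for $m(p)=1$ the deduction ``multiplicity one $\Rightarrow$ embedded $\Rightarrow$ smooth'' is not an off-the-shelf classical fact: the second implication is Lemma \ref{lemma:m}\,\ref{item-lemma-ii}, proved via the Bohle--Peters jet criterion $\min_j \ord_0((F'F^{-1})_j)=-2$ after normalizing with the matrix transformations of Remark \ref{rk:flambda} and passing through $\Tcal^{-1}$, and it uses the hypothesis that the lift has an effective pole at the puncture. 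So while no step of your plan is wrongly directed, the proof of \ref{prop2} as written is missing its core: the explicit models with $\ord_p(F_1)\neq\ord_p(F_3)$, and the stability lemma (Lemma \ref{lemma:m}) together with verification of its hypotheses for the approximating curve.
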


This result improves \cite[Theorem 1.1]{AlarconCastro-InfantesHidalgo2023} by adding uniform approximation and ensuring the properness of $\psi$ under the necessary compatibility condition. Furthermore, it allows to prescribe any given multiplicity at the points of $E$; note that in \cite[Theorem 1.1]{AlarconCastro-InfantesHidalgo2023} the given map $m$ is required to take only odd values.

Before proving Theorem \ref{th:Bryant}, let us briefly explain some of the notions in its statement.
Let $\varphi: M \to \h^3$ be a conformal $\CMC$ immersion, where $M$ is an open Riemann surface.  Bryant defined its {\em hyperbolic Gauss map} $G : M \to \partial_{\infty} \h^3$, where $\partial_\infty \h^3$ is the ideal boundary of $\h^3$, by setting  $G(p)$ to be the intersection of the oriented normal geodesic which passes through $\varphi(p)$ with $\partial_\infty \h^3$, $p \in M$. One can identify $\partial_\infty \h^3$ with $\cp^1$ and it turns out that $\CMC$ surfaces in $\h^3$ are characterized by the holomorphicity of their  hyperbolic Gauss map \cite{Bryant1987Asterisque}. 
Complete $\CMC$ ends of finite total curvature are conformally equivalent to a punctured disc \cite{Huber1957} but the hyperbolic Gauss map need not extend to the puncture. 
Complete $\CMC$ ends where the hyperbolic Gauss map extends meromorphically are called {\em regular} \cite[Definition 4.1]{UmeharaYamada1993AM}.
The hyperbolic Gauss map is fundamental to understand the geometry of $\CMC$ surfaces in $\h^3$ as it is shown in \cite{UmeharaYamada1993AM}, among others. In particular, Umehara and Yamada defined in \cite[Sec.\ 5]{UmeharaYamada1993AM} the {\em multiplicity} $m \in \n$ of a regular end and proved that it is embedded if and only if it is of multiplicity $1$ \cite[Theorem 5.2]{UmeharaYamada1993AM}.  Thus, roughly speaking, the multiplicity measures how far is the regular end to be embedded. It is also seen that an immersed $\CMC$ regular end is always tangent to $\partial_\infty \h^3$ \cite[Remark 5.5]{UmeharaYamada1993AM};
it is said to be {\em smooth} if  the immersion extends smoothly to the puncture through $\partial_\infty \h^3$ \cite[p.\ 590]{BohlePeters2009}. Clearly, smooth $\CMC$ ends are properly embedded (hence of multiplicity one), regular, and of finite total curvature.
 More generally, properly embedded $\CMC$ annular ends are regular and of finite total curvature by \cite[Theorem 10]{CollinHauswirthRosenberg2001AM}.
We refer to the surveys \cite{JensenMussoNicolodi2016, Rossman01, Rosenberg2002} on $\CMC$ surfaces in $\h^3$; see also \cite[Sec.\ 1]{AlarconCastro-InfantesHidalgo2023} and the references therein. 

Set $\D=\{z\in\C\colon |z|<1\}$ and $\D^*=\D\setminus\{0\}$. We begin with the following.
%
%
\begin{lemma}\label{lemma:m}
Let $F=(F_1, F_2, F_3, F_4) : \D^*\to \SL$ be a holomorphic null curve extending meromorphically to $\D$ with an effective pole at the origin and assume that the conformal $\CMC$ immersion
$\varphi=\pi_H\circ F : \D^* \to \h^3$ (see \eqref{eq:pih}) is not totally umbilic and its secondary Gauss map extends holomorphically to the origin (see \cite[Def.\ 1.2]{UmeharaYamada1996Ann}). Then the following hold.
\begin{enumerate}[label= \rm (\roman*)]
\item\label{item-lemma-i} The end of $\varphi$ at the origin is proper, of finite total curvature, regular, and of multiplicity $ \left|	\ord_0  (( F_3/ F_1 ) ' ) +1 \right|$; see \eqref{eq:orden}.
\item\label{item-lemma-ii} If the end of $\varphi$ at the origin is embedded, then it is a smooth end.
\end{enumerate}
\end{lemma}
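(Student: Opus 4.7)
The plan is to read off the classical Bryant--Umehara--Yamada invariants of $\varphi$ from the matrix entries of its null lift $F$, and then to derive both assertions from established results on regular $\CMC$ ends in $\h^3$. Using the null identity $dF_1\cdot dF_4 = dF_2\cdot dF_3$ together with $\det F\equiv1$, the quotient $F_1/F_3$ is a meromorphic invariant of $\varphi$ intimately related to the hyperbolic Gauss map $G$, while the secondary Gauss map can be expressed as $-dF_2/dF_1 = -dF_4/dF_3$. The non-umbilic hypothesis translates into $F_3/F_1$ being nonconstant on $\D$.

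For part~(i), properness of the end at $0$ is immediate: $\pi_H:\SL\to\h^3$ is proper and $F$ has an effective pole at $0$, so $\varphi(z)\to\infty$ in $\h^3$ as $z\to0$. Regularity and finite total curvature then follow from \cite[Def.\ 4.1 and Thm.\ 4.11]{UmeharaYamada1993AM}: a $\CMC$ end is regular (and hence of finite total curvature) if and only if both Gauss maps extend meromorphically to the puncture. The secondary Gauss map extends holomorphically by hypothesis, while the meromorphy of $F_1$ and $F_3$ at $0$ gives the corresponding extendability of the hyperbolic Gauss map. For the multiplicity, I would compare the Umehara--Yamada definition \cite[Sec.\ 5]{UmeharaYamada1993AM} with $|\ord_0((F_3/F_1)') + 1|$ through a case analysis on $G(0)$: if $G$ has a pole of order $m$ at $0$, then $F_3/F_1 = 1/G$ has a zero of order $m$, so $(F_3/F_1)'$ has order $m-1$ and the formula yields $m$; if $G(0)\in\C$, then $(F_3/F_1)'=-G'/G^2$ has order $m-1$ where $m=\ord_0(G-G(0))$, again yielding $m$.

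For part~(ii), the embeddedness criterion \cite[Thm.\ 5.2]{UmeharaYamada1993AM} states that a regular $\CMC$ end is embedded exactly when its multiplicity equals $1$. Combined with the properness and finite total curvature established in part~(i), this places $\varphi$ in the hypotheses of the classical smoothness results for properly embedded $\CMC$ annular ends of $\h^3$, namely \cite[Thm.\ 10]{CollinHauswirthRosenberg2001AM} and the discussion in \cite[p.\ 590]{BohlePeters2009}, yielding that the end extends smoothly to the puncture through $\partial_\infty\h^3$.

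The main obstacle is the multiplicity computation: one must unpack the Umehara--Yamada definition, which is phrased in terms of the asymptotic behaviour of $G$ at the ideal boundary, and match it against the concrete order $\ord_0((F_3/F_1)')+1$. Both the $+1$ shift (tracking the passage from a function to its derivative) and the absolute value (absorbing the two cases $G(0)=\infty$ and $G(0)\in\C$) must come out naturally from this bookkeeping; the remaining steps are routine applications of established results.
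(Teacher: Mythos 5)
Your treatment of part (i) does not establish the multiplicity formula, because it rests on the false identity $F_3/F_1=1/G$: the hyperbolic Gauss map is $G=dF_1/dF_3$, a quotient of differentials, not of matrix entries, so what your case analysis computes is the branching order of $G$ at the puncture, which is a different quantity from $\left|\ord_0((F_3/F_1)')+1\right|$. Moreover the bridge you invoke, ``multiplicity $=$ branching order of $G$ at the end'', is neither derived from the definition in \cite[Sec.\ 5]{UmeharaYamada1993AM} nor true: for instance, the pair $F_1=z^{-1}+z$, $F_3=1+z^2$ can be completed (by solving for the second column) to a holomorphic null curve into $\SL$ on a small punctured disc about the origin satisfying all hypotheses of the lemma, with secondary Gauss map holomorphic at $0$; there $F_3/F_1=z$, and indeed $\ord_0\omega=-2$ and $\hat q_{-2}=0$ give multiplicity $1$, while $G=(z^2-1)/(2z^3)$ has a pole of order $3$. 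What the lemma actually requires is the computation relating the Umehara--Yamada data to the entries of $F$: with $\omega=F_1\,dF_3-F_3\,dF_1$, $g=-dF_2/dF_1$ and Hopf differential $Q=\omega\,dg$, the multiplicity of a regular end equals $\sqrt{(\ord_0(\omega)+1)^2+4\hat q_{-2}}$, and one must verify $\ord_0(\omega)=l+2k$ and $\hat q_{-2}=-k(k+1+l)$ with $k=\ord_0(F_1)$, $l=\ord_0((F_3/F_1)')$, so that the square root collapses to $|l+1|$; none of this bookkeeping appears in your proposal. (Your claims on properness, regularity and finite total curvature are essentially fine, although finite total curvature should be read off from the induced metric $(1+|g|^2)^2|\omega|^2$ with $g$ holomorphic at $0$ and $\omega$ meromorphic, rather than from the equivalence you attribute to \cite{UmeharaYamada1993AM}.)

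Part (ii) has a second genuine gap: smoothness of the end does not follow from embeddedness plus regularity and finite total curvature. \cite[Theorem 10]{CollinHauswirthRosenberg2001AM} only yields regularity and finite total curvature of properly embedded annular ends, and \cite[p.\ 590]{BohlePeters2009} contains the definition of a smooth end, not a sufficient criterion; embedded regular $\CMC$ ends (e.g.\ catenoid cousin ends) are in general not smooth, which is precisely why Bohle and Peters characterize smooth ends separately. Under the hypotheses of the lemma the argument must go through the order criterion of \cite[Lemma 3.2\,(i)]{BohlePeters2009}, namely that the end is smooth if and only if $\min_j \ord_0\big((F'F^{-1})_j\big)=-2$: one observes that this minimum is unchanged under composing $F$ with suitable constant matrices, normalizes so that $\ord_0(F_1)=\min_j\ord_0(F_j)<0$, notes that then $X=\Tcal^{-1}\circ F$ extends holomorphically to the origin with $X_3(0)=0$, and concludes via \cite[Lemma 6.2]{AlarconCastro-InfantesHidalgo2023}. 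Your proposal supplies no substitute for this step, so the conclusion of (ii) is not proved.
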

%
%
\begin{proof}
Since $F$ has an effective pole, it is clear that $\varphi$ has a proper (hence complete) 
end at the origin. Its {\em hyperbolic Gauss map} $G : \D^* \to \partial_\infty \h^3\equiv\cp^1$ is given by
\[
	G= \frac{d F_1}{d F_3} = \frac{d F_2}{d F_4}
\]
(see \cite{Bryant1987Asterisque} or \cite[Eq. (1.12)]{UmeharaYamada1993AM}),
so $\varphi$ has a regular end at $0$.
Write 
\[
	F^{-1}  d F = \left( \begin{array}{cc}
		g &  -g^2\\
		1& -g
	\end{array}\right) \omega,
\]
where
\begin{equation}\label{eq:secondary}
	g= -\frac{dF_2}{dF_1} = - \frac{d F_4}{dF_3}
\end{equation}
is a meromorphic function on $\D$
(the {\em secondary Gauss map} of $\varphi$; see \cite[Def.\ 1.2]{UmeharaYamada1996Ann})
which may be assumed to be holomorphic on $\D$ (recall that $g$ extends holomorphically to $0$ by hypothesis),
and 
\[
	\omega = F_1 d F_3 - F_3 d F_1
\]
is a meromorphic $1$-form on $\D$ which is holomorphic on $\D^*$. 
A straightforward computation shows that  the \emph{Hopf differential} $Q = \omega d g$ of $\varphi$ (see \cite{Bryant1987Asterisque} 
or \cite[Eq.\ (1.10)]{UmeharaYamada1993AM}) is the holomorphic $2$-form
\[
	Q  = \left(\frac{F_1''}{F_1} - \frac{F_1'}{F_1} \cdot \frac{F_3''  F_1 - F_3 F_1''}{F_3 ' F_1 - F_3 F_1 '}  \right) dz^2 \quad \text{on } \D^*
\]
(cf.\ \cite[Eq.\ (6.12)]{AlarconCastro-InfantesHidalgo2023}).
Note that $Q$ extends meromorphically to the origin with $\ord_ 0 (Q) \ge -2$ (see \eqref{eq:orden}), and so its Laurent expansion reads
\[
	Q =  \Big(    \sum_{j= -2}^{\infty}  \hat q_j z^j  \Big) dz^2\quad \text{on $\D^*$},
\]
where $\hat q_j \in \C$ for every $j\ge -2$.
(This also follows from \cite[Prop.\ 6]{Bryant1987Asterisque} or \cite[Lemma 1.5]{UmeharaYamada1993AM}.) 
Moreover, the Riemannian metric induced on $\D^*$ by the hyperbolic metric in $\h^3$ via $\varphi$ is
$(1 + |g|^2 )^2 |\omega |^2$
(see \cite[Eq.\ (1.9)]{UmeharaYamada1993AM}), and so the end of $\varphi$ at $0$ is of finite total curvature. 
Since $\varphi$ is not totally umbilic (that is, $Q\not\equiv 0$) and its secondary Gauss map is holomorphic at the origin,
the discussion in \cite[Sec. 5]{UmeharaYamada1993AM} ensures that the multiplicity of its end at the origin is
\[
	m= \sqrt{(\ord_0 (\omega) + 1)^2 + 4 \hat q_{-2}} \in \N.
\]
It is seen by direct computation that 
$\ord_0 (\omega)= l+ 2k$ and $\hat{q}_{-2} = - k (k+1+l)$, where
$k = \ord_0 (F_1) \in \Z$ and $l = \ord_0  \big( ( F_3/F_1 )'  \big) \in \Z$.
It thus follows that $m = | l +1|$, thereby proving \ref{item-lemma-i}.
Let us now check \ref{item-lemma-ii}, so assume that the end of $\varphi$ at the origin is embedded.
By \cite[Lemma 3.2 (i)]{BohlePeters2009}, we have that the end is smooth if and only if $\min \{ \ord_0 (( F ' F^{-1})_j)\colon  j= 1,2,3,4\}=-2$. The number in the left remains invariant when  replacing $F$ by either of the following holomorphic null curves:
\[
	\left(\begin{matrix}
		F_2 & -F_1\\
		F_4& -F_3
	\end{matrix}\right), 
	\quad 
	\left(\begin{matrix}
		 F_3 & F_4\\
		- F_1 & - F_2
	\end{matrix}\right) 
	, \quad 
	 \left(\begin{matrix}
		F_4  & - F_3\\
		- F_2 & F_1
	\end{matrix}\right).
\]
So, we may assume that $\min \{ \ord_0 (F_j) \colon j =1,2,3,4\} = \ord_0 (F_1)<0$. Therefore, the holomorphic null curve $X = (X_1, X_2, X_3) = \Tcal^{-1} \circ F  : \D^* \to \C^3$ extends holomorphically to $0$ with $X_3 (0) = 0$; see \eqref{eq:t-1}. In this situation,  \cite[Lemma 6.2]{AlarconCastro-InfantesHidalgo2023} guarantees that the embedded end of $\varphi=\pi_H\circ \Tcal\circ X$ at $0$ is smooth, as claimed. 
\end{proof}
%
%
\begin{proof}[Proof of Theorem \ref{th:Bryant}] 
We may assume that $\varphi$ is not totally umbilic; otherwise we add a point  to $\Lambda$ and extend $\varphi$ to a neighborhood of this point as a non-totally umbilic conformal $\CMC$ immersion.  We also assume that $\varphi  : K \to \h^3$ is proper; the proof is simpler otherwise. Lift $\varphi : K \to \h^3$ to a holomorphic null curve $F : K \to \SL$ such that $\pi_H\circ F = \varphi$ \eqref{eq:pih}, and note that $F$ is proper by properness of $\pi_H$ and $\varphi$. 
Let  $V= \bigcup_{p\in E} V_p$ be a union of holomorphic coordinate open discs $V_p \ni p$, for every $p \in E$, with pairwise disjoint closures and such that $\overline V \cap K = \varnothing$. Fix  $p \in E$ and let $z : V_p \to \D$ be a holomorphic coordinate on $V_p$ with $z(p) = 0$.     

Extend $F$ to $V_p \setminus \{p\}$ as follows.

\noindent {\em Case 1.} If  $m(p) = 1$, where $m$ is the map given in the statement, set
\[
	F (z)= \left( z^{-2}, \, -\frac{4}{3} z, \, z^{-1}, \, -\frac{1}{3} z^{2}  \right) \text{ for all }z\in V_p \setminus \{p\}.
\]

\noindent {\em Case 2.} If $m(p) >1$,  set 
\[
F(z)=\left(\frac{1}{z},\,\frac{-z^{m(p)+1}}{m(p)+2},\,\frac{-1}{m(p) z^{m(p)+1}},\,\frac{(m(p)+1)^2z}{(m(p)+1)^2 -1}\right)
\text{ for all }z\in V_p\setminus\{p\}.
\]

This gives a holomorphic null curve $F = (F_1, F_2, F_3, F_4)  : K \cup (V \setminus E) \to \SL$ extending meromorphically to $K \cup V$ with an effective pole at every point of $E$. Clearly $\ord_p (F_3) \not = \ord_p (F_1)$ for every $p\in E$ (see \eqref{eq:orden}), hence
\begin{equation}\label{eq:condm}
	\left|	\ord_p  \left( \left( F_3 / F_1 \right)  ' \right) +1 \right| = | \ord_p (F_3) - \ord_p (F_1)| = m(p) \text{ for every }p\in E,
\end{equation}
where the derivative is taken in the holomorphic coordinates $z$ on $V$.  Extend $\varphi$ to $V\setminus E$ as $\varphi=\pi_H\circ F$ (see  \eqref{eq:pih}), a conformal $\CMC$ immersion. Note that its secondary Gauss map extends holomorphically to $E$; see \eqref{eq:secondary}.
Now extend the given map $k : \Lambda \to \N$ to $E$ by setting 
\begin{equation}\label{eq:kbryant}
	k(p) = 1+ \max\{    |\ord_p (F_i) | \colon i =1,2,3,4\},\quad p\in E.
\end{equation}
Since $F : K \to \SL$ is proper and $F$ has an effective pole at every point of $E$, there is a set $W\subset M$ which is a locally finite union of pairwise disjoint smoothly bounded simply connected compact domains such that $K\cap W=\varnothing$, $E \subset \mathring  W \Subset V$, and $F|_{S \setminus E} : S \setminus E \to \SL$ is a proper map for $S = K \cup W$.  
Fix $\varepsilon_0 >0$ to be specified later. Theorem \ref{th:mt} applied to $F|_{S\setminus E}$ furnishes us with a proper nonflat holomorphic null curve $\wt F = (\wt F_1, \wt F_2, \wt F_3, \wt F_4) \colon M \setminus E \to \SL$ satisying the following.
\begin{enumerate}[label=(\Roman*)]
\item \label{Fpoles} $\wt F - F$ extends continuously to $S$.
\item \label{Faprox} $\| \wt F - F \|_{0,S}  < \varepsilon_0$.
\item \label{contact} $\wt F - F$ vanishes to order $k (p)$ at every point $p \in \Lambda \cup E$.
\end{enumerate}
Therefore, the composition
\[
	\psi := \pi_H \circ \wt F  \colon  M \setminus E \to \h^3
\]
is a proper conformal $\CMC$ immersion; see  \eqref{eq:pih}. We claim that $\psi$ satisfies the conclusion of the theorem provided that $\varepsilon_0>0$ is chosen small enough.
Indeed, \ref{Faprox} ensures that $\varphi$ and $\psi$ are as close as desired on $K$, 
and \ref{prop1} follows from \ref{contact}. 
Observe that if the approximation of $\varphi$ by $\psi$ on $K$ is good enough then $\psi$ is not totally umbilic. 
Moreover, $\wt F$ has an effective pole at every point of $E$ by \ref{Fpoles} since so has $F$.
Also note that the secondary Gauss map of $\psi$ extends holomorphically to the origin; take into account \eqref{eq:secondary}, \eqref{eq:kbryant}, and \ref{contact}.
Thus Lemma \ref{lemma:m} guarantees that the end of $\psi$ corresponding to each point $p\in E$ is of finite total curvature, regular, and of multiplicity $|	\ord_p  \big( ( \wt F_3 / \wt F_1 )  ' \big) +1 |$; it is in addition smooth if it is of multiplicity $1$.  By    \eqref{eq:kbryant} and \ref{contact}, we have that 
\[
	\ord_p (\wt F_3)= \ord_p (F_3) \not = \ord_p (F_1) = \ord_0 (\wt F_1)\quad  \text{for every } p\in E,
\]
and hence, using also \eqref{eq:condm}, we obtain that
\[
	\big|	\ord_p  \big( ( \wt F_3 / \wt F_1 )  ' \big) +1 \big| = |\ord_p (\wt F_3) - \ord_p (\wt F_1)|=  m (p) \quad \text{for every } p\in E.
\]
This shows \ref{prop2}. Finally, condition \ref{prop3} is obvious from the definition of $\psi$.
\end{proof}

The second main result in this section is an analogue of Theorem  \ref{th:Bryant} for $\CMC$ faces in de Sitter $3$-space $\S^3_1$. Recall that 
\[
	\S_1^3 = \{   ( x_0, x_1, x_2, x_3) \in \mathbb{L}^4 \colon -x_0^2 + x_1^2 + x_2^2 + x_3^2 = 1 \}
\] 
with metric induced from $\mathbb{L}^4$. It can be identified as $\S_1^3=\SL/ {\rm SU_{1,1}}$, where 
\begin{equation}\label{eq:SU11}
	{\rm SU_{1,1}} = \Big\{ A \in \SL \colon  A \left(\begin{array}{cc}
		1&0\\
		0&-1
	\end{array} \right)   \overline{A}^T = {\rm Id} \Big\}
\end{equation}
(see \cite[\textsection 0]{FRUYY09}), and the canonical projection 
\[
	\pi_S  \colon \SL \to \S_1^3, \quad \pi_S (A) = A \left(\begin{array}{cc}
		1&0\\
		0&-1
	\end{array} \right)   \overline{A}^T
\]
takes holomorphic null curves $M\to\SL$ into \emph{$\CMC$ faces} $M\to\S_1^3$ provided that its secondary Gauss map (see \cite[Remark 1.2]{Fujimori06}) does not have constant norm equal to $1$ \cite[Theorem 1.9]{Fujimori06}. Moreover, if such a holomorphic null curve is complete then it projects to a weakly complete $\CMC$ face in $\S_1^3$. Conversely, every simply connected $\CMC$ face $M\to\S_1^3$ lifts to a holomorphic null curve $M\to\SL$ such that the norm of its secondary Gauss map is not identically $1$.

The following extension of Corollary \ref{co:deSitter} can be proved by the same argument of the proof of Theorem \ref{th:Bryant}. 
%
%
\begin{theorem}\label{th:deSitter}
Let $M$, $K$, $\Lambda$, $E$, and $k$ be as in Theorem \ref{th:Bryant}, and let $E_1 \subset E$ be a subset. Then, every $\CMC$ face
$\phi: K \to\s_1^3$ can be approximated uniformly on $K$ by weakly complete $\CMC$ faces $\psi: M \setminus E \to\s_1^3$ such that:
\begin{enumerate}[label= \rm (\roman*)]
	\item \label{prop1s} $\psi$ and $\phi$ have a contact of order $k(p)$ at every point $p\in\Lambda$.
	\item \label{prop2s} For each $p\in E$, the end of $\psi$ corresponding to $p$ is complete, regular, and of finite total curvature (see \cite[Def.\ 2.1 and 2.8]{Fujimori06}); it is in addition embedded if and only if $p\in E_1$.
	\item\label{prop3s} $\psi$ lifts to a holomorphic null curve $M\setminus E\to\SL$ of class $\Oscr_\infty(M|E,\SL)$ with an effective pole at each point of $E$.
\end{enumerate}
\end{theorem}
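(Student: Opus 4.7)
The plan is to mimic the proof of Theorem \ref{th:Bryant} almost verbatim, replacing the projection $\pi_H\colon\SL\to\h^3$ by $\pi_S\colon\SL\to\S_1^3$ and accommodating two differences: since $\pi_S$ is not proper, weak completeness of $\psi$ cannot be inherited from a properness property of the lift but must instead be produced from completeness of the lift via the almost-properness clause \ref{mtap} in Theorem \ref{th:mt}; and the multiplicity prescription of Theorem \ref{th:Bryant} is replaced by the embedded/non-embedded dichotomy between $E_1$ and $E\setminus E_1$, forcing different local models at each $p\in E$. First I would lift $\phi\colon K\to\S_1^3$ to a holomorphic null curve $F\colon K\to\SL$ with $\pi_S\circ F=\phi$, using that each connected component of $K$ is simply connected. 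As $\phi$ is a $\CMC$ face, the secondary Gauss map of $F$ does not have constant norm $1$ on any component of $K$; by adding a point to $\Lambda$ if necessary I would ensure this property is robust under small holomorphic perturbations of $F$.

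Next I would extend $F$ meromorphically across each $p\in E$. I choose pairwise disjoint holomorphic coordinate discs $V_p\ni p$ with $\overline{V_p}\cap K=\varnothing$, and on each $V_p\setminus\{p\}$ select an explicit holomorphic null extension of $F$, meromorphic on $V_p$ with an effective pole at $p$, arranged so that the end of $\pi_S\circ F$ at $p$ is complete, regular, of finite total curvature, and embedded precisely when $p\in E_1$. Such local models are available from the end classification of $\CMC$ faces in $\S_1^3$ carried out in \cite{Fujimori06,FRUYY09,FKKRUY13}, and play the role of Cases 1 and 2 in the proof of Theorem \ref{th:Bryant}. Crucially, these end properties depend only on a finite jet of $F$ at $p$ (orders of zeros and poles of the components of $F$ together with finitely many Laurent coefficients of auxiliary holomorphic forms built from $F$), a point that will be reused below.

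Then, setting $S=K\cup W$ for a locally finite union $W\subset V=\bigcup_{p\in E}V_p$ of pairwise disjoint smoothly bounded simply connected compact domains with $E\subset\mathring W\Subset V$, extending $k\colon\Lambda\to\N$ to $E$ by taking $k(p)$ strictly larger than the jet order identified above at $p$ (cf.\ \eqref{eq:kbryant}), and applying Theorem \ref{th:mt} in its almost-proper version, I obtain a nonflat holomorphic null curve $\wt F\colon M\setminus E\to\SL$ of class $\Oscr_\infty(M|E,\SL)$ with an effective pole at every $p\in E$, $\wt F-F$ continuous on $S$ and arbitrarily $\Cscr^0$-small on $K$, and $\wt F-F$ vanishing to order $k(p)$ at every $p\in\Lambda\cup E$. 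The map $\psi:=\pi_S\circ\wt F\colon M\setminus E\to\S_1^3$ is then the desired $\CMC$ face: it is a genuine $\CMC$ face because the secondary Gauss map of $\wt F$ does not have constant norm $1$ (by the stability arranged in the first step together with the jet preserved at $E$); it is weakly complete because almost-properness of $\wt F$ forces completeness, and complete holomorphic null curves in $\SL$ project to weakly complete $\CMC$ faces by the discussion preceding Corollary \ref{co:deSitter}; \ref{prop1s} is the $k(p)$-order contact translated through $\pi_S$; \ref{prop3s} is immediate from the construction; and \ref{prop2s} follows from the jet preservation at each $p\in E$.

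The main obstacle is establishing the de Sitter analogue of Lemma \ref{lemma:m}: one must identify, for a meromorphic null lift $F$ with an isolated singularity at $p$, a finite jet of $F$ at $p$ whose value alone determines completeness, regularity, finite total curvature, and embeddedness of the corresponding end of $\pi_S\circ F$ in $\S_1^3$. Completeness, regularity, and finiteness of the total curvature admit such a characterization by the standard Hopf-differential and secondary-Gauss-map analysis as in the proof of Lemma \ref{lemma:m}, but embeddedness of a $\CMC$-face end is more delicate; the relevant invariants must be extracted from the end analysis in \cite{Fujimori06,FRUYY09,FKKRUY13}. With this jet criterion in hand, taking $k(p)$ above the jet order guarantees that $\wt F$ inherits from $F$ all the prescribed end behaviour, including the embeddedness dichotomy imposed by $E_1$, which completes the plan.
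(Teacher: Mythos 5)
Your overall architecture coincides with the paper's: lift $\phi$ on the simply connected components of $K$, attach explicit meromorphic null local models at the points of $E$, apply Theorem \ref{th:mt} (almost proper version) with contact of order $k(p)$ at $\Lambda\cup E$, and project by $\pi_S$, obtaining weak completeness from completeness of the lift. However, there is a genuine gap precisely where you flag ``the main obstacle'': you never establish a criterion, computable from a finite jet of the lift at $p$, for embeddedness of a $\CMC$-face end in $\S_1^3$; you only assert that the relevant invariants ``must be extracted'' from \cite{Fujimori06,FRUYY09,FKKRUY13}. Without such a criterion you can neither certify that your local models have embedded ends exactly at the points of $E_1$ (and non-embedded ends at $E\setminus E_1$), nor, more importantly, that the final curve $\wt F$, which agrees with $F$ only to finite order $k(p)$ at each $p\in E$, inherits this dichotomy: embeddedness of an end is not a priori a finite-jet-determined property, so the assertion that all the prescribed end behaviour ``depends only on a finite jet'' is exactly what remains to be proved.

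The paper closes this gap with one short reduction that your proposal misses: by \cite[Theorem 3.1 and Remark 3.2]{Fujimori06}, for a null curve $F:\D^*\to\SL$ as in Lemma \ref{lemma:m} (with secondary Gauss map of non-constant norm $1$) the end of $\pi_S\circ F$ at the puncture is embedded if and only if the end of the conformal $\CMC$ immersion $\pi_H\circ F:\D^*\to\h^3$ is embedded, while completeness, regularity, and finite total curvature of the de Sitter end also follow from the same local analysis. Combined with Lemma \ref{lemma:m} and the Umehara--Yamada criterion that a regular $\CMC$ end in $\h^3$ is embedded if and only if it has multiplicity $1$ \cite[Theorem 5.2]{UmeharaYamada1993AM}, embeddedness of the de Sitter end reduces to the finite-jet condition $\bigl|\ord_p\bigl((F_3/F_1)'\bigr)+1\bigr|=1$, which for the explicit models equals $|\ord_p(F_3)-\ord_p(F_1)|$. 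One then simply reuses the two local models from the proof of Theorem \ref{th:Bryant}: Case 1 (multiplicity $1$) at the points of $E_1$ and Case 2 with $m(p)>1$ at the points of $E\setminus E_1$; the contact of order $k(p)$ provided by Theorem \ref{th:mt}, with $k(p)$ chosen as in \eqref{eq:kbryant}, preserves the relevant pole orders and hence the embedded/non-embedded dichotomy. (Your remark on keeping the secondary Gauss map away from constant norm $1$ is fine and is needed, as in the non-totally-umbilic reduction in the proof of Theorem \ref{th:Bryant}, but it does not substitute for the missing embeddedness criterion.)
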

Let us briefly explain how to grant \ref{prop2s}. For this, assume that $F : \D^* \to \SL$ is a holomorphic null curve as in the hypothesis of Lemma \ref{lemma:m} and with the norm of its secondary Gauss map not identicaly $1$. It follows that $\varphi = \pi_S \circ F : \D^* \to \s_1^3$ is a $\CMC$ face with a complete regular end of finite total curvature at the origin. Moreover, the end of $\varphi$ at the origin is embedded if and only if so is the end at the origin of the conformal $\CMC$ immersion $\pi_H \circ F:\D^*\to\h^3$; see \cite[Theorem 3.1 and Remark 3.2]{Fujimori06}. In view of this, \ref{prop2s} is ensured as in the proof of Theorem \ref{th:Bryant}. 

Note that Theorems \ref{th:Bryant} and \ref{th:deSitter} (even in case $E=\varnothing$)  do not follow by a recursive application of Corollaries \ref{co:bryant} and \ref{co:deSitter}, respectively, due to the assumption in the corollaries that $K$ is simply connected. So the former results are honestly more general than the latter.
A more general point of view that unifies $\CMC$ surfaces in $\h^3$ and $\CMC$ faces in $\S_1^3$ as projections of holomorphic null curves in $\SL$ is that of holomorphic null curves in the nonsingular complex hyperquadric $\mathbb{Q}_3\subset\CP^4$, viewed as the Grassmannian of Lagrangian 2-dimensional planes of $\C^4$ with its standard conformal structure. We refer to \cite{MussoNicolodi2022IJM,BohlePeters2009} for further information.

%
%
\subsection*{Acknowledgements}
Research partially supported by the State Research Agency (AEI) via the grants no.\ PID2020-117868GB-I00  and PID2023-150727NB-I00, and the ``Maria de Maeztu'' Unit of Excellence IMAG, reference CEX2020-001105-M, funded by MICIU/AEI/10.13039/501100011033 and ERDF/EU, Spain.

%
%

\begin{thebibliography}{10}

\bibitem{Abraham1963}
R.~Abraham.
\newblock Transversality in manifolds of mappings.
\newblock {\em Bull. Amer. Math. Soc.}, 69:470--474, 1963.

\bibitem{Alarcon2017JGA}
A.~Alarc\'on.
\newblock Proper holomorphic {L}egendrian curves in {$SL_2(\Bbb C)$}.
\newblock {\em J. Geom. Anal.}, 27(4):3013--3029, 2017.

\bibitem{AlarconCastro-Infantes2019APDE}
A.~Alarc{\'o}n and I.~Castro-Infantes.
\newblock Interpolation by conformal minimal surfaces and directed holomorphic
  curves.
\newblock {\em Anal. PDE}, 12(2):561--604, 2019.

\bibitem{AlarconCastro-InfantesHidalgo2023}
A.~Alarc{\'o}n, I.~Castro-Infantes, and J.~Hidalgo.
\newblock Complete {CMC}-1 surfaces in hyperbolic space with arbitrary complex
  structure.
\newblock {\em Commun. Contemp. Math.}, 27(1):Paper No. 2450011, 26, 2025.

\bibitem{AlarconForstneric2014IM}
A.~Alarc{\'o}n and F.~Forstneri\v{c}.
\newblock Null curves and directed immersions of open {R}iemann surfaces.
\newblock {\em Invent. Math.}, 196(3):733--771, 2014.

\bibitem{AlarconForstneric2015MA}
A.~Alarc{\'o}n and F.~Forstneri\v{c}.
\newblock The {C}alabi--{Y}au problem, null curves, and {B}ryant surfaces.
\newblock {\em Math. Ann.}, 363(3-4):913--951, 2015.

\bibitem{AlarconForstnericLopez2017CM}
A.~Alarc{\'o}n, F.~Forstneri\v{c}, and F.~J. L{\'o}pez.
\newblock Holomorphic {L}egendrian curves.
\newblock {\em Compos. Math.}, 153(9):1945--1986, 2017.

\bibitem{AlarconForstnericLopez2021Book}
A.~Alarc\'{o}n, F.~Forstneri\v{c}, and F.~J. L\'{o}pez.
\newblock {\em Minimal surfaces from a complex analytic viewpoint}.
\newblock Springer Monographs in Mathematics. Springer, Cham, [2021] \copyright
  2021.

\bibitem{AlarconLopez2012JDG}
A.~Alarc{\'o}n and F.~J. L{\'o}pez.
\newblock Minimal surfaces in {$\mathbb R^3$} properly projecting into
  {$\mathbb R^2$}.
\newblock {\em J. Differential Geom.}, 90(3):351--381, 2012.

\bibitem{AlarconLopez2013MA}
A.~Alarc{\'o}n and F.~J. L{\'o}pez.
\newblock Null curves in {$\mathbb {C}^3$} and {C}alabi-{Y}au conjectures.
\newblock {\em Math. Ann.}, 355(2):429--455, 2013.

\bibitem{AlarconLopez2019}
A.~Alarc\'{o}n and F.~J. L\'{o}pez.
\newblock Algebraic approximation and the {M}ittag-{L}effler theorem for
  minimal surfaces.
\newblock {\em Anal. PDE}, 15(3):859--890, 2022.

\bibitem{AlarconVrhovnik2024X}
A.~Alarc{\'o}n and T.~Vrhovnik.
\newblock {The Mittag-Leffler theorem for proper minimal surfaces and directed
  meromorphic curves}.
\newblock {\em arXiv e-prints}, 2024.

\bibitem{BehnkeStein1949}
H.~Behnke and K.~Stein.
\newblock Entwicklung analytischer {F}unktionen auf {R}iemannschen {F}l\"achen.
\newblock {\em Math. Ann.}, 120:430--461, 1949.

\bibitem{Bishop1958PJM}
E.~Bishop.
\newblock Subalgebras of functions on a {R}iemann surface.
\newblock {\em Pacific J. Math.}, 8:29--50, 1958.

\bibitem{BobenkoPavlyukevichSpringborn2003}
A.~I. Bobenko, T.~V. Pavlyukevich, and B.~A. Springborn.
\newblock Hyperbolic constant mean curvature one surfaces: spinor
  representation and trinoids in hypergeometric functions.
\newblock {\em Math. Z.}, 245(1):63--91, 2003.

\bibitem{BohlePeters2009}
C.~Bohle and G.~P. Peters.
\newblock Bryant surfaces with smooth ends.
\newblock {\em Comm. Anal. Geom.}, 17(4):587--619, 2009.

\bibitem{Bryant1987Asterisque}
R.~L. Bryant.
\newblock Surfaces of mean curvature one in hyperbolic space.
\newblock {\em Ast\'{e}risque}, 353 (1988)(154-155):12, 321--347, 1987.
\newblock Th\'{e}orie des vari\'{e}t\'{e}s minimales et applications
  (Palaiseau, 1983--1984).

\bibitem{Carleman1927}
T.~{Carleman}.
\newblock {Sur un th\'eor\`eme de Weierstra{\ss}.}
\newblock {\em {Ark. Mat. Astron. Fys.}}, 20(4):5, 1927.

\bibitem{Castro-InfantesChenoweth2020}
I.~Castro-Infantes and B.~Chenoweth.
\newblock Carleman approximation by conformal minimal immersions and directed
  holomorphic curves.
\newblock {\em J. Math. Anal. Appl.}, 484(2):123756, 2020.

\bibitem{Castro-InfantesHidalgo2024}
I.~Castro-Infantes and J.~Hidalgo.
\newblock C{MC}-1 surfaces in hyperbolic and de {S}itter spaces with {C}antor
  ends.
\newblock {\em Mediterr. J. Math.}, 21(5):Paper No. 167, 22, 2024.

\bibitem{CollinHauswirthRosenberg2001AM}
P.~Collin, L.~Hauswirth, and H.~Rosenberg.
\newblock The geometry of finite topology {B}ryant surfaces.
\newblock {\em Ann. of Math. (2)}, 153(3):623--659, 2001.

\bibitem{Conway1973}
J.~B. Conway.
\newblock {\em Functions of one complex variable}, volume~11 of {\em Graduate
  Texts in Mathematics}.
\newblock Springer-Verlag, New York-Heidelberg, 1973.

\bibitem{deLimaRoitman2002}
L.~L. de~Lima and P.~Roitman.
\newblock Constant mean curvature one surfaces in hyperbolic 3-space using the
  {B}ianchi-{C}al\`o{} method.
\newblock {\em An. Acad. Brasil. Ci\^enc.}, 74(1):19--24, 2002.

\bibitem{FarkasKra1992}
H.~M. Farkas and I.~Kra.
\newblock {\em Riemann surfaces}, volume~71 of {\em Graduate Texts in
  Mathematics}.
\newblock Springer-Verlag, New York, second edition, 1992.

\bibitem{Florack1948}
H.~Florack.
\newblock Regul\"are und meromorphe {F}unktionen auf nicht geschlossenen
  {R}iemannschen {F}l\"achen.
\newblock {\em Schr. Math. Inst. Univ. M\"unster,}, 1948(1):34, 1948.

\bibitem{FFW18}
J.~E. Forn{\ae}ss, F.~Forstneri\v{c}, and E.~F. Wold.
\newblock Holomorphic approximation: the legacy of {W}eierstrass, {R}unge,
  {O}ka-{W}eil, and {M}ergelyan.
\newblock In {\em Advancements in complex analysis---from theory to practice},
  pages 133--192. Springer, Cham, [2020] \copyright 2020.

\bibitem{Forstneric2009CR}
F.~Forstneri\v{c}.
\newblock Oka manifolds.
\newblock {\em C. R. Math. Acad. Sci. Paris}, 347(17-18):1017--1020, 2009.

\bibitem{Forstneric2017E}
F.~Forstneri\v{c}.
\newblock {\em Stein manifolds and holomorphic mappings (The homotopy principle
  in complex analysis)}, volume~56 of {\em Ergebnisse der Mathematik und ihrer
  Grenzgebiete. 3. Folge}.
\newblock Springer, Cham, second edition, 2017.

\bibitem{Fujimori06}
S.~Fujimori.
\newblock {Spacelike CMC 1 surfaces with elliptic ends in de Sitter 3-space}.
\newblock {\em Hokkaido Mathematical Journal}, 35(2):289 -- 320, 2006.

\bibitem{FKKRUY13}
S.~Fujimori, Y.~Kawakami, M.~Kokubu, W.~Rossman, M.~Umehara, and K.~Yamada.
\newblock Hyperbolic metrics on {R}iemann surfaces and space-like {CMC}-1
  surfaces in de {S}itter 3-space.
\newblock In {\em Recent trends in {L}orentzian geometry}, volume~26 of {\em
  Springer Proc. Math. Stat.}, pages 1--47. Springer, New York, 2013.

\bibitem{FRUYY09}
S.~Fujimori, W.~Rossman, M.~Umehara, K.~Yamada, and S.-D. Yang.
\newblock Spacelike mean curvature one surfaces in de {S}itter 3-space.
\newblock {\em Comm. Anal. Geom.}, 17(3):383--427, 2009.

\bibitem{GalvezMira2008}
J.~A. G\'alvez and P.~Mira.
\newblock The {L}awson correspondence for {B}ryant surfaces in explicit
  coordinates.
\newblock {\em Mat. Contemp.}, 35:61--72, 2008.

\bibitem{Huber1957}
A.~Huber.
\newblock On subharmonic functions and differential geometry in the large.
\newblock {\em Comment. Math. Helv.}, 32:13--72, 1957.

\bibitem{Hurwitz1895}
A.~Hurwitz.
\newblock Ueber die {B}edingungen, unter welchen eine {G}leichung nur {W}urzeln
  mit negativen reellen {T}heilen besitzt.
\newblock {\em Math. Ann.}, 46(2):273--284, 1895.

\bibitem{JensenMussoNicolodi2016}
G.~R. Jensen, E.~Musso, and L.~Nicolodi.
\newblock {\em Surfaces in classical geometries}.
\newblock Universitext. Springer, Cham, 2016.
\newblock A treatment by moving frames.

\bibitem{KokubuUmeharaYamada2003}
M.~Kokubu, M.~Umehara, and K.~Yamada.
\newblock An elementary proof of {S}mall's formula for null curves in {${\rm
  PSL}(2,{\bf C})$} and an analogue for {L}egendrian curves in {${\rm
  PSL}(2,{\bf C})$}.
\newblock {\em Osaka J. Math.}, 40(3):697--715, 2003.

\bibitem{Lawson1970AM}
H.~B. Lawson, Jr.
\newblock Complete minimal surfaces in {$S^{3}$}.
\newblock {\em Ann. of Math. (2)}, 92:335--374, 1970.

\bibitem{MartinUmeharaYamada2009CVPDE}
F.~Mart\'{i}n, M.~Umehara, and K.~Yamada.
\newblock Complete bounded null curves immersed in {$\mathbb{C}^3$} and {${\rm
  SL}(2,\mathbb{C})$}.
\newblock {\em Calc. Var. Partial Differential Equations}, 36(1):119--139,
  2009.
\newblock Erratum: Complete bounded null curves immersed in {$\mathbb{C}^3$}
  and {${\rm SL}(2,\mathbb{C})$}, Calc. Var. Partial Differential Equations
  46(1-2): 439--440 (2013).

\bibitem{Mergelyan1951}
S.~N. Mergelyan.
\newblock On the representation of functions by series of polynomials on closed
  sets.
\newblock {\em Doklady Akad. Nauk SSSR (N.S.)}, 78:405--408, 1951.

\bibitem{Mittag-Leffler1884AM}
G.~{Mittag-Leffler}.
\newblock {D\'emonstration du th\'eor\`eme de Laurent.}
\newblock {\em {Acta Math.}}, 4:80--88, 1884.

\bibitem{MussoNicolodi2009}
E.~Musso and L.~Nicolodi.
\newblock The spinor representation of {CMC} 1 surfaces in hyperbolic space.
\newblock {\em Note Mat.}, 28:317--339, 2009.

\bibitem{MussoNicolodi2022IJM}
E.~Musso and L.~Nicolodi.
\newblock Conformal geometry of isotropic curves in the complex quadric.
\newblock {\em Internat. J. Math.}, 33(8):Paper No. 2250054, 32, 2022.

\bibitem{Pirola2007AJM}
G.~P. Pirola.
\newblock Monodromy of constant mean curvature surface in hyperbolic space.
\newblock {\em Asian J. Math.}, 11(4):651--669, 2007.

\bibitem{Rosenberg2002}
H.~Rosenberg.
\newblock Bryant surfaces.
\newblock In {\em The global theory of minimal surfaces in flat spaces
  ({M}artina {F}ranca, 1999)}, volume 1775 of {\em Lecture Notes in Math.},
  pages 67--111. Springer, Berlin, 2002.

\bibitem{Rossman01}
W.~Rossman.
\newblock Mean curvature one surfaces in hyperbolic space, and their
  relationship to minimal surfaces in {E}uclidean space.
\newblock {\em J. Geom. Anal.}, 11(4):669--692, 2001.

\bibitem{Runge1885}
C.~Runge.
\newblock Zur {T}heorie der {E}indeutigen {A}nalytischen {F}unctionen.
\newblock {\em Acta Math.}, 6(1):229--244, 1885.

\bibitem{SaEarpToubiana2004}
R.~Sa~Earp and E.~Toubiana.
\newblock Meromorphic data for mean curvature one surfaces in hyperbolic
  three-space.
\newblock {\em Tohoku Math. J. (2)}, 56(1):27--64, 2004.

\bibitem{Small1994}
A.~J. Small.
\newblock Surfaces of constant mean curvature {$1$} in {${\bf H}^3$} and
  algebraic curves on a quadric.
\newblock {\em Proc. Amer. Math. Soc.}, 122(4):1211--1220, 1994.

\bibitem{Svetina2024}
A.~Svetina.
\newblock Approximation of holomorphic {L}egendrian curves with
  jet-interpolation.
\newblock {\em J. Math. Anal. Appl.}, 531(2):Paper No. 127839, 25, 2024.

\bibitem{UmeharaYamada1993AM}
M.~Umehara and K.~Yamada.
\newblock Complete surfaces of constant mean curvature {$1$} in the hyperbolic
  {$3$}-space.
\newblock {\em Ann. of Math. (2)}, 137(3):611--638, 1993.

\bibitem{UmeharaYamada1996Ann}
M.~Umehara and K.~Yamada.
\newblock Surfaces of constant mean curvature {$c$} in {$H^3(-c^2)$} with
  prescribed hyperbolic {G}auss map.
\newblock {\em Math. Ann.}, 304(2):203--224, 1996.

\bibitem{Weierstrass1885}
K.~{Weierstrass}.
\newblock {Ueber die analytische Darstellbarkeit sogenannter willk\"urlicher
  Functionen einer reellen Ver\"anderlichen.}
\newblock {\em {Berl. Ber.}}, 1885:633--640, 789--806, 1885.

\end{thebibliography}

%
%
%
\noindent Antonio Alarc\'{o}n, Jorge Hidalgo

\noindent Departamento de Geometr\'{\i}a y Topolog\'{\i}a e Instituto de Matem\'aticas (IMAG), Universidad de Granada, Campus de Fuentenueva s/n, E--18071 Granada, Spain.

\noindent  e-mail: {\tt alarcon@ugr.es, jorgehcal@ugr.es}

\end{document}